\definecolor{lime}{HTML}{A6CE39}
\DeclareRobustCommand{\orcidicon}{
	\begin{tikzpicture}
	\draw[lime, fill=lime] (0,0) 
	circle [radius=0.16] 
	node[white] {{\fontfamily{qag}\selectfont \tiny ID}};
	\draw[white, fill=white] (-0.0625,0.095) 
	circle [radius=0.007];
	\end{tikzpicture}
	\hspace{-2mm}
}
\xdef\csname orcid\x\endcsname{\noexpand\href{https://orcid.org/\csname orcidauthor\x\endcsname}{\noexpand\orcidicon}}
\newtheorem{theorem}{Theorem}[section]
\newtheorem{corollary}[theorem]{Corollary} 
\newtheorem{axiom}[theorem]{Axioms} 
\newtheorem{proposition}[theorem]{Proposition}
\newtheorem{assumption}[theorem]{Assumption}
\newtheorem{assumptions}[theorem]{Assumptions}
\newtheorem{definition}[theorem]{Definition}
\theoremstyle{definition}
\newtheorem{remark}[theorem]{Remark}
\newtheorem{example}[theorem]{Example}
\title[Abstract integration ...]{Abstract integration with respect to 
%(possibly infinite) 
measures and applications to modular convergence %in Orlicz spaces
in vector lattice setting}
\author[A. Boccuto]{Antonio Boccuto, \,\,
 \orcidA{} }
\author[A. R. Sambucini]{Anna Rita Sambucini, \,\,
 \orcidB{} }
\address{Department of Mathematics and Computer Sciences, University of Perugia, 1, Via Vavitelli, 06123 Perugia - Italy}
\email{antonio.boccuto@unipg.it; anna.sambucini@unipg.it}
 \subjclass[2020]{Primary 41A35; 28B05; Secondary  46A19.}
 \keywords{integration, modular convergence, vector lattice, Orlicz Space, Urysohn-type integral operator. }
\begin{document}
\begin{abstract}
A "Bochner-type" integral for  vector lattice-valued functions with respect to (possibly infinite)
vector lattice-valued measures is presented with respect to abstract 
convergences, satisfying suitable axioms, and some fundamental properties are studied. Moreover, by means of this integral,
some convergence results on operators in vector lattice-valued modulars are
proved. Some applications are given to moment kernels and to the Brownian motion.
\end{abstract}   
\maketitle 
%================================
%================================
\section{Introduction}\label{sec1}
In the literature there are many studies concerning
the problem of approximating a real-valued function $f$ by Urysohn-type integral operators  
or discrete sampling operators.
These topics  together with some other kind of operators,  have several applications in several branches,
for instance neural networks and  reconstruction of signals and images 
(see, e.g., \cite{AV2,A2, A3,bm1, bmmellin, BCG2016, BMV, CCCGV, CCGV2021,
CG2017, CV2016,CSV,butz1,butz2}).

In the classical setting, a signal is viewed as a function $f$, defined on a suitable finite or infinite
time interval, which is reconstructed by starting of  
sampled values of the type $f\Bigl(\dfrac{k}{w}\Bigr)$, where $k$ 
is a suitable (finite or not) subset of $\mathbb{Z}$, 
and $w$ is the theoretical minimum sampling rate  necessary to reconstruct  the signal entirely
(see, e.g., \cite{BMV} and the references therein).
Thus, the original unknown signal can be considered as a  function with values in a space consisting of 
random variables, that is (classes of equivalence of) measurable functions. Among them, 
the spaces $L^0$ of $L^2$ are widely used  in stochastic processes. In these kind of spaces, 
it is often advisable  to deal with almost everywhere convergence, which 
does not have a topological nature, but corresponds 
to order convergence, if we endow the involved space with the 
"component-wise" order and supremum. Thus, it is natural to 
investigate \emph{vector lattice-valued functions}, 
defined on a set of (possibly infinite) measure, like for example 
the Lebesgue measure on a halfline or the whole of $\mathbb{R}$
(see, e.g., \cite{BCSVITALI}). 

In this paper we extend to the vector lattice setting the problem of approximating a function $f$ by means of
Urysohn-type integral operators in  the setting of modular convergence, extending some earlier results
proved in \cite{bm1,bmmellin,BCSVITALI,BDMEDITERRANEAN}. We consider three vector lattices 
${\mathbf{X}}_1$, ${\mathbf{X}}_2$, ${\mathbf{X}}$, 
"linked" by a suitable "product" structure, we deal with  ${\mathbf{X}}_1$-valued functions defined
on a metric space, (possibly infinite) ${\mathbf{X}}_2$-valued measures and construct an
${\mathbf{X}}$-valued integral, which will be an extension of  the integrals investigated in \cite{BC2009}, where only finite
measures are considered, and in \cite{BCSVITALI}, where 
it is supposed that ${\mathbf{X}}$ and ${\mathbf{X}}_1$ are
endowed with stronger order units $e$ and $e_1$, respectively. We
drop this hypothesis in our context. We  endow ${\mathbf{X}}_1$, ${\mathbf{X}}_2$
and ${\mathbf{X}}$ with abstract convergences and 
structures of "limit superior", satisfying suitable axioms, 
in order to include usual relative uniform and order convergence  and the related almost and Ces\`{a}ro 
convergences in the general case, the order filter convergence
at least in the $L^p$ case (with $0 \leq p \leq \infty$), and the 
relative uniform filter convergence when ${\mathbf{X}}$ and ${\mathbf{X}}_1$ have order units. Note that this type 
of convergence is equivalent to the norm filter convergence 
in ${\mathbb{R}}$, where we have the norm generated  by the order unit.

The paper is structured as follows. 
In Section \ref{due} we present the abstract axioms on convergence  and limit superior and give some examples. 
In Section \ref{product} we develop integration theory in our setting, giving  Vitali-type theorems and a version of the Lebesgue 
dominated convergence theorem,  we compare our integral with the classical Lebesgue integral, and state
some main propertiers, among which  some versions of Jensen's inequality, in connection with uniformly continuous 
and convex functions. The proof of these results will be given 
in the Appendix. In Section \ref{modularriesz} we recall  the theory of vector lattice-valued modulars introduced in
\cite{BCSVITALI}.
In Section \ref{structural} we give the structural hypotheses on the
involved operators, and in Section \ref{argument} we present our main 
results on modular convergence of operators in Orlicz spaces 
in the vector lattice setting.
In Subsection \ref{appl}, as examples and applications, we deal with 
moment kernels in the vector lattice context, and with
It\^{o}-type integrals with respect to Brownian motion.
%================================
%================================
%================================
\section{Preliminaries}\label{due}
Let  $(\mathbf{X}, \leq_X)$ be a vector lattice and  let the symbols $\vee$ and $\wedge$ denote the 
{\em lattice suprema} and {\em infima}
 in $\mathbf{X}$. We say that 
 $\mathbf{X}$ is  \textit{Dedekind complete} iff every nonempty subset $A \subset \mathbf{X}$, 
order bounded from above, admits a  lattice supremum  in $\mathbf{X}$, denoted by $\bigvee A$.
From now on, $\mathbf{X}$ is a Dedekind complete vector lattice,   $\mathbf{X}^{+}:=\{ x \in \mathbf{X}: x \geq_X 0\}$ 
 is
its positive cone,  and we denote by 
the symbol $\mathbb{R}_0^+$ 
(resp., $\mathbb{R}^+$), as usual, the 
set of non-negative (resp., strictly positive) real numbers. 
For  every $x \in \mathbf{X}$, set $\vert   x\vert    = x \vee (-x)$.\\
We add to $\mathbf{X}$ an extra 
 element 
$+\infty$, extending the order and the operations  on $\mathbf{X}$ in a natural way. Let
$\overline{\mathbf{X}}=\mathbf{X} \cup \{+ \infty\}$,   $\overline{\mathbf{X}}^{+}=\mathbf{X}^{+} \cup \{+ \infty\}$, 
and assume, by convention, $0 \cdot (+ \infty) = 0$.\\
An \emph{$(o)$-sequence} $(\sigma_l)_{l \in \mathbb{N}}$ in  $\mathbf{X}^{+}$ is a decreasing sequence with
$\wedge_l \sigma_l=0$ (see, e.g., \cite{BRV}). For what unexplained we refer, for example,  to \cite{BC2009,BDBOOK}.\\
A \emph{strong order unit} of a vector lattice ${\mathbf{X}}$ is an element
$e\in {\mathbf{X}}^+ \setminus \{0\}$ with the property that, for every $x \in {\mathbf{X}}$,
there exists $\delta \in {\mathbb{R}}^+$ such that $\vert   x\vert    \leq_X \delta \, e$.
For  example, if  $e\in {\mathbf{X}}^+ \setminus \{0\}$,
 then $e$ is a strong order  unit of $V[e]:=\{x \in {\mathbf{X}} : \text{    there   is  } \delta \in 
\mathbb{R}^+$   with    $\vert   x\vert    \leq_X \delta \, e \}.$
 Moreover we observe that $V[e]$ is \emph{solid} in ${\mathbf{X}}$ and 
Dedekind complete.
Furthermore,  by virtue of the Kakutani-Krein theorem (see, e.g.,  \cite[Theorem II.7.4]{SCHAEFER}), $V[e]$ is algebraically and lattice 
isomorphic to the space  of all real-valued continuous functions 
defined on a suitable compact 
and extremely disconnected topological space $\Omega$
(see, e.g., \cite{KAWABE2}).
%\end{example}
A similar, more general theorem 
(Maeda-Ogasawara-Vulikh-type representation theorem)
holds also for any  Dedekind complete vector lattice  
$\mathbf{X} \hookrightarrow   \mathcal{C}_{\infty}(\Omega) =
\{f \in \widetilde{\mathbb{R}}^{\Omega} : f 
\mbox{ is continuous, and   } 
\{\omega: \vert  f(\omega) \vert =+\infty \}  \mbox{ is nowhere dense in }
\Omega \}$
(see, e.g., \cite[Theorem 2.1]{FILTER}, \cite{MO, VULIKH}.
%=================================
%=================================
%================================
%=================================
%================================
We will use  the symbols $\sup$ and $\inf$ to denote both the pointwise suprema and infima in ${\mathcal C}_{\infty}(\Omega)$ and 
those in the (extended) real line. \\

Let ${\mathcal T}$ be the set of all sequences $(x_n)_n$ in $\mathbf{X}$ and ${\mathcal T}^{+}=\{ (x_n)_n \in {\mathcal T}$:
$x_n \geq_X 0$ for each $n \in \mathbb{N}\}$.
Now we give an axiomatic approach to convergence in vector lattices (see, e.g., \cite[Definition 2.1]{BC2009} and \cite{bbdmkorovkin}). 
\begin{axiom} \label{convergenze}
\rm 
A \em convergence \rm is a pair $(\mathcal{S},\ell)$, where $\mathcal{S}$ is a linear subspace of ${\mathcal T}$ and $\ell$ is a
function $\ell:\mathcal{S} \to \mathbf{X}$,  which satisfies the following  conditions: 
\begin{itemize} \label{convergenceaxioms}
\item[{\ref{convergenze}.a)}] 
$\ell((\zeta_1 \, x_n + \zeta_2 \, y_n)_n)= \zeta_1 \, \ell ( (x_n)_n)+ \zeta_2 \, \ell ( (y_n)_n)$
for every pair of sequences $(x_n)_n$, $(y_n)_n \in \mathcal{S}$ and for each $\zeta_1$, $\zeta_2 \in \mathbb{R}$ (\em linearity\rm ).

\item[{\ref{convergenze}.b)}] 
If $(x_n)_n$, $(y_n)_n \in \mathcal{S}$ and $x_n \leq_X y_n$ definitely, then ${\ell}((x_n)_n) \leq_X \ell((y_n)_n)$ (\em monotonicity\rm ).

\item[{\ref{convergenze}.c)}] 
If $(x_n)_n$ is such that $x_n=l$ definitely, then $(x_n)_n \in \mathcal{S}$ and $\ell((x_n)_n)=l$; if $(x_n)_n$, $(y_n)_n$
are such that the set $\{n \in \mathbb{N}: x_n \neq y_n \}$ is finite and $(x_n)_n \in \mathcal{S}$, then $(y_n)_n \in \mathcal{S}$ 
and $\ell((y_n)_n)=\ell((x_n)_n)$.

\item[{\ref{convergenze}.d)}] 
If $(x_n)_n \in \mathcal{S}$, then $(\vert   x_n\vert   )_n \in \mathcal{S}$ and $\ell((\vert   x_n\vert   )_n)=\vert   \ell((x_n)_n)\vert   $.

\item[{\ref{convergenze}.e)}]
Given three sequences $(x_n)_n$, $(y_n)_n$, $(z_n)_n$, 
 with $(x_n)_n$, $(z_n)_n \in \mathcal{S}$, 
$\ell((x_n)_n)=\ell((z_n)_n)$, and  $x_n \leq_X y_n \leq_X z_n$ definitely, then $(y_n)_n \in \mathcal{S}$ 
(and hence from \ref{convergenze}.b) it follows that $\ell((x_n)_n)= \ell((y_n)_n)=\ell((z_n)_n))$.

\item[{\ref{convergenze}.f)}]
 If $u \in \mathbf{X}^{+}$, then the sequence $\Bigl(\dfrac1n u\Bigr)_n$ belongs to $\mathcal{S}$ and 
$\ell\Bigl(\Bigl(\dfrac1n u\Bigr)_n\Bigr)=0$.
\end{itemize}
\end{axiom}

The next property  is a consequence of \ref{convergenze}.c) and \ref{convergenze}.e).
\begin{itemize}
\item[{\ref{convergenze}.g)}] 
If $x \in \mathbf{X}$, $(y_n)_n \in \mathcal{S}$ and $x \leq_X y_n$ definitely, then  $x \leq_X \ell((y_n)_n)$.
\end{itemize}

Analogously, we now present an axiomatic approach of  a ``limit superior''-type vector lattice-valued
operator related  to a convergence $(\mathcal{S}, \ell)$, 
 satisfying Axioms \ref{convergenceaxioms} 
(see, e.g., \cite[Section 2]{bbdmkorovkin}).

\begin{axiom}\label{limsuppresentation}
\rm Let ${\mathcal T}$, $\mathcal{S}$ be as in Axioms \ref{convergenze} and define a function $\overline{\ell}: {\mathcal T}^{+}
\to \overline{\mathbf{X}}^{+}$  satisfying the following 
 conditions:
\begin{itemize}
\item[{\ref{limsuppresentation}.a)}]
If $(x_n)_n, (y_n)_n \in {\mathcal T}^{+}$ are such that $x_n=y_n$ definitely, then $\overline{\ell}((x_n)_n)=\overline{\ell}((y_n)_n)$.

\item[{\ref{limsuppresentation}.b)}]
If $(x_n)_n, (y_n)_n \in {\mathcal T}^{+}$, then 
$\,\overline{\ell}(( x_n + y_n)_n) \leq_X \overline{\ell}((x_n)_n) + \overline{\ell}((y_n)_n)\,$ 
(\em subadditivity\rm).

\item[{\ref{limsuppresentation}.c)}]
If $(x_n)_n$, $(y_n)_n \in {\mathcal T}^{+}$ and $x_n \leq_X y_n$ definitely, then $\overline{\ell}
((x_n)_n) \leq_X \overline{\ell}((y_n)_n)$ (\em monotonicity\rm).

\item[{\ref{limsuppresentation}.d)}]
If a sequence $(x_n)_n \in {\mathcal T}^{+} \cap \mathcal{S}$, then   ${\overline{\ell}}((x_n)_n) =\ell((x_n)_n)$.

\item[{\ref{limsuppresentation}.e)}]
If a sequence $(x_n)_n \in {\mathcal T}^{+}$ is such that  ${\overline{\ell}}((x_n)_n) =0$, then $(x_n)_n \in \mathcal{S}$ and \mbox{$\ell((x_n)_n)=0$.}
\end{itemize}
\end{axiom}

A \emph{filter} on $\mathbb{N}$ is a family $\mathcal{F}$ of subsets of ${\mathbb{N}}$ such that $\emptyset \not \in {\mathcal F}$,
$A \cap B \in {\mathcal F}$ whenever $A$, $B \in \mathcal{F}$, and for every $A \in \mathcal{F}$ and $B \subset \mathbb{N}$
 with
%eliminati gli ultimi vecchi red
$B \supset A$, it is $B\in \mathcal{F}$.
We denote by ${\mathcal F}_{\text{cofin}}$ the filter of all cofinite subsets of  ${\mathbb N}$.
We say that a filter on ${\mathbb N}$  is \textit{free} iff  it contains ${\mathcal F}_{\text{cofin}}$.
An example of free filter is the filter of all subsets of $\mathbb{N}$ whose asymptotic density is equal to one  (see, e.g., \cite{BDBOOK}).

\begin{remark}\label{Acontinuity} \rm
 Some examples 
of convergences satisfying Axioms \ref{convergenze} are:
if ${\mathcal F}$ is a free filter,
 the \emph{relative uniform filter $((r{\mathcal F})$-$)$convergence} and the \emph{order filter $((o{\mathcal F})$-$)$convergence}
(see \cite[Definition 1.1]{BCSVITALI});
the \emph{almost convergence} and the \emph{Ces\`{a}ro convergence} (see, e.g., \cite{BDBOOK, BORSIKSALAT}).
\\
We recall that a sequence $(x_n)_n$ in $\mathbf{X}$ is
\emph{relatively uniformly filter $((r{\mathcal F})$-$)$con\-ver\-gent} to $x \in \mathbf{X}$,
iff there exists $u \in \mathbf{X}^+ \setminus \{0 \}$ such that
$\{ n \in \mathbb{N}: \vert   x_n - x\vert    \leq_X  \varepsilon \, u \} \in {\mathcal F}$ for each $\varepsilon \in \mathbb{R}^+$;
\emph{order filter $((o{\mathcal F})$-$)$convergent} to $x \in \mathbf{X}$,
iff there exists an $(o)$-sequence $(\sigma_l)_l$ with the property that, for all $l \in \mathbb{N}$, 
%\begin{eqnarray}\label{o-conv}
$\{ n \in \mathbb{N}: \vert   x_n - x\vert    \leq_X \sigma_l \} \in {\mathcal F}$.
If ${\mathcal F}$ is the filter ${\mathcal F}_{\rm{cofin}}$
of all subsets of ${\mathbb{N}}$
whose complement is finite, then the 
$(r{\mathcal F}_{\rm{cofin}})$- and
$(o{\mathcal F}_{\rm{cofin}})$-convergence coincide with the usual $(r)$- and
$(o)$-convergence, respectively (see, e.g., \cite{BDBOOK}).
\\ 
A sequence $(x_n)_n$ in $\mathbf{X}$ is  $(r)$- ( resp., $(o)$-) 
\emph{almost convergent} to $x \in \mathbf{X}$, iff 
\[(r)\text{-} ( \text{resp.,} (o)\text{-}) \lim_n 
\left(\bigvee_{m\in \mathbb{N}} 
\left\vert    \frac{x_{m+1} + x_{m+2} +\ldots + 
x_{m+n}}{n}- x \right\vert 
   \right) =0;\]
$(r)$- ( resp., \emph{order} or $(o)$-)
\emph{Ces\`{a}ro convergent} to $x \in \mathbf{X}$, iff 
\[(r)\text{-} ( \text{resp.,} (o)\text{-}) \lim_n \frac{x_1 + x_2 +\ldots + 
x_n}{n}= x.
\]
\\
Observe that $(r{\mathcal F})$-convergence implies $(o{\mathcal F})$-convergence, and in general  they do not coincide. However, when $\mathbf{X}=\mathbb{R}$,
these  convergences
are equivalent. In this case, we will  denote both convergences by \emph{$({\mathcal F})$-convergence}.\\
%Moreover, 
 Note that, in general,
%even when $\mathbf{X}=\mathbb{R}$, 
the usual convergence is strictly stronger than almost convergence, and almost convergence is strictly 
stronger than the Ces\`{a}ro one. 
Moreover, observe that almost and Ces\`{a}ro 
 convergences
are not equivalent to $({\mathcal F})$-convergence 
for \emph{any} free filter. Indeed, if a function  $\phi:\mathbb{R} \to \mathbb{R}$ is  sequentially continuous at $0$ with respect to
almost (resp., Ces\`{a}ro) convergence
(that is, $\phi$ maps sequences almost (resp., Ces\`{a}ro) convergent to $0$ into sequences almost (resp., Ces\`{a}ro) convergent to $0$), then
$\phi$ is linear, but sequential continuity with respect to $({\mathcal F})$-convergence   coincides with usual continuity (see, e.g.,  \cite{bbdmkorovkin, BORSIKSALAT, KSW}).
\end{remark}
\begin{example}\label{limsupoperators}
Now we give some examples of ``limsup''-type operators 
 satisfying Axioms \ref{limsuppresentation}
 and some related properties.
\begin{itemize}
\item[\ref{limsupoperators}.a)] 
 The  \emph{order limit superior}  of  a sequence $(x_n)_n$ in ${\mathbf{X}}^{+}$ is 
the element of $\overline{\mathbf{X}}^{+}$  defined by
$\displaystyle{
(o)\limsup_n x_n=\wedge_{m=1}^{\infty} (\vee_{n \geq m} \, x_n)}$
 (see, e.g., \cite{VULIKH}).

\item[\ref{limsupoperators}.b)]
 Let $\mathbf{X}=\mathbb{R}$ and ${\mathcal F}$ be any fixed free filter  on $\mathbb{N}$.
Given any sequence $(x_n)_n$ in $\mathbb{R}^{+}_0$, we call  \emph{$({\mathcal F})$-limit superior} 
of $(x_n)_n$ (shortly, $(\mathcal {F})\limsup_n x_n$) the element of $\overline{\mathbb{R}}^{+}_0$ defined by
$\displaystyle{
(\mathcal{F})\limsup_n x_n= \inf_{F \in \mathcal{F}} (\sup_{n \in F}\, x_n)}$
(see, e.g., \cite{BC20092, DEMIRCILIMSUP}).

\item[\ref{limsupoperators}.c)] 
Given any sequence $(x_n)_n$ in ${\mathbf{X}}^{+}$, we call 
\emph{order filter limit superior}  the element of $\overline{\mathbf{X}}^{+}$ defined by
$\displaystyle{(o\mathcal{F})\limsup_n x_n= \bigwedge_{F \in \mathcal{F}}  \Bigl(\bigvee_{n \in F} x_n \Bigr)}$.
The order filter limit superior satisfies Axioms \ref{limsuppresentation}.a) - \ref{limsuppresentation}.d) (see, e.g., \cite{BC20092}).
Observe that, in the setting of vector lattices, when $\mathbf{X} \neq \mathbb{R}$, the problem of finding suitable ``limsup''-type operators
 satisfying Axiom \ref{limsuppresentation}.e) is still open, though some positive partial answers are found in \cite{BC20092}.
 For example, if $T$ is any nonempty set and 
${\mathbf{X}}={\mathbb{R}}^T$, the space of all real-valued
functions defined on $T$, or if $R$ 
is a Dedekind complete vector lattice and ${\mathbf{X}}=
R^{\sim}$ (resp., %${\mathbf{X}}=
$R^{\times}$) is the space of all linear order bounded 
(resp., order continuous) functionals, 
then the lattice and 
"component-wise" suprema and infima coincide
(see, e.g., \cite{POKKUTO}). From this,
since Axiom \ref{limsuppresentation}.e) is satisfied when 
${\mathbf{X}}={\mathbb{R}}$,
then it holds also when ${\mathbf{X}}={\mathbb{R}}^T$,
%${\mathbf{X}}=
$R^{\sim}$ or 
%${\mathbf{X}}=
$R^{\times}$. Moreover, observe that the above argument
can be applied even if ${\mathbf{X}}$ is a vector lattice whose
elements are equivalence classes of functions, and in which the lattice 
suprema/infima coincide with the respective classes of equivalence
to which the 
pointwise suprema/infima belong. This is the case of 
the so-called "Dedekind complete $\rho$-spaces", whose 
examples are the spaces
$L^p(\Omega, \Sigma, \nu)$, where $0 \leq p \leq \infty$,
$(\Omega, \Sigma, \nu)$ is a measure space and
$\nu: \Sigma \to \mathbb{R}^{+}_0$ is a 
$\sigma$-additive and $\sigma$-finite measure,
which do not have a strong order unit 
and in which the order convergence does not have
a topological nature (see, e.g., 
\cite{POKKUTO, VULIKH}). These spaces can be viewed also 
"directly" as spaces of classes of equivalences of 
real-valued functions defined on $\Omega$ 
up to $\nu$-null sets,
in which the suprema and infima 
have the aforementioned property.

%===========================

%==========================
\item[\ref{limsupoperators}.d)] 
Suppose that ${\mathbf{X}}$ has a strong order unit $e$,  fix any free filter ${\mathcal F}$ on $\mathbb{N}$ and let us endow ${\mathbf{X}}$ with
$(r{\mathcal F})$-convergence  (see, e.g., 
\cite{BCSVITALI}). Then, it is not difficult to see that
a sequence
%We claim that a sequence 
$(x_n)_n$ in $\mathbf{X}$ is $(r{\mathcal F})$-convergent to $x \in \mathbf{X}$ if and only if
%\begin{eqnarray}\label{eFc}
$\{ n \in \mathbb{N}: \vert   x_n - x\vert    \leq_X \varepsilon \, e\} \in {\mathcal F}$
%\end{eqnarray}
for every $\varepsilon \in \mathbb{R}^+$.

%======================
Now we  endow ${\mathbf{X}}$ 
with the norm $\|  \cdot\| _e$, defined by
\begin{eqnarray}\label{latticenorm}
\| x\|  _e=\inf \{\varepsilon \in 
\mathbb{R}^+:
 \vert   x\vert   \leq_X \varepsilon \, e\}
\end{eqnarray}
(see, e.g., 
\cite{bodivi, CSorder, CS2015, MEYER, SCHAEFER}). 
 Observe
that $\|  \cdot\|  _e$ is \emph{monotone}, that is
$\| x\|  _e \leq \|  y\|_e$ whenever $x,y \in \mathbf{X}$ and 
$\vert   x\vert    \leq \vert   y\vert   $. Moreover, thanks to the properties of the
infimum, it is not difficult to see that
%============
%It is not difficult to check that $\\vert   \cdot\\vert   _e$ is a norm,  that
\begin{eqnarray}\label{prime}
\vert   x \vert    \leq \|  x \|_e \, e ,
\end{eqnarray}
\begin{eqnarray}\label{doubleimplication}
\vert   x\vert    \leq_X \alpha \, e \quad \Longleftrightarrow \quad  \|  x\|_e \leq \alpha,
\end{eqnarray}
for all $\alpha \in
\mathbb{R}^+ $ and $x \in {\mathbf{X}}$
(see, e.g., \cite[\S 7.4 (2)]{VULIKH},
\cite[Proposition 1.2.13]{MEYER}), and
\begin{eqnarray}\label{triple}
\|  \alpha \, e \|_e = \vert   \alpha \vert    \quad \text{for  all    }
\alpha \in \mathbb{R}.
\end{eqnarray}
%====================
Then,
a sequence in ${\mathbf{X}}$ $(r{\mathcal F})$-converges  if and only if it $({\mathcal F})$-converges with respect to the norm $\|  \cdot\| _e$.
Thus, analogously as in \cite{BC20092,DEMIRCILIMSUP},
the operator
${\overline{\ell}}((x_n)_n)=
\displaystyle{({\mathcal F})\limsup_n \|  x_n\| _{e}}$
%\end{eqnarray}
satisfies Axioms \ref{limsuppresentation}, because  they are fulfilled by the usual 
filter limit superior in ${\mathbb{R}}$, and taking into account the properties of the norm. 

\item[\ref{limsupoperators}.e)] 
Analogously as in Remark \ref{Acontinuity},  it is possible to associate in a natural way an ``order limsup''-type operator
%one can see that
 to $(o)$-almost and $(o)$-Ces\`{a}ro convergences
 defined in Remark
\ref{Acontinuity}, 
%defined in Examples \ref{example}
satisfying Axioms \ref{limsuppresentation}.
\item[{\ref{limsupoperators}.f)}] Note that, if $\mathbb{R}$ is
endowed with any convergence 
$\ell_{\mathbb{R}}$ satisfying 
Axioms \ref{convergenze} and 
whose corresponding``limit superior'' operator
$\bar{\ell}_{\mathbb{R}}$ fulfils
Axioms \ref{limsuppresentation},
and $(x_n)_n$ is any sequence 
of non-negative real numbers 
such that
$\displaystyle{\lim_n x_n=0}$
%$(x_n)_n$ is an $(o)$-sequence 
in the usual sense, then $\ell_{\mathbb{R}}(x_n)_n=0$.
Indeed, fixed arbitrarily $\varepsilon \in {\mathbb{R}}^+$,
there is a positive integer $n_0$ with $x_n \leq \varepsilon$ 
whenever $n \geq n_0$. From this and Axioms \ref{convergenze}.c),
\ref{limsuppresentation}.c) and \ref{limsuppresentation}.d) we deduce that 
$\bar{\ell}_{\mathbb{R}}(x_n)_n \leq \varepsilon$. By the 
arbitrariness of $\varepsilon$ and Axiom \ref{limsuppresentation}.e) 
it follows that $\ell_{\mathbb{R}}(x_n)_n=0$.
\end{itemize}
\end{example}
%==================================================
%==================================================
\section{The integral with respect to abstract convergences}\label{product}
In this section we give the construction of an abstract integral for vector lattice-valued 
functions with respect to (possibly infinite) vector lattice-valued measures, 
extending the integrals presented in \cite{BC2009,BCSVITALI}.
Let $\mathbf{X}_1$, $\mathbf{X}_2$, $\mathbf{X}$ be  three Dedekind complete vector lattices. 
\begin{assumption}\label{ass}
\rm
We say that  $(\mathbf{X}_1,\mathbf{X}_2,\mathbf{X})$ is a \emph{product triple}
iff a ``product operation'' $\cdot :\mathbf{X}_1 \times \mathbf{X}_2 \to \mathbf{X}$ is defined, 
satisfying the following conditions \cite[Assumption 2.2]{BC2009}:
\begin{itemize}
\item[{\ref{ass}.1)}]
$ (x_1+y_1) \cdot x_2=  x_1 \cdot x_2 +y_1 \cdot x_{2}$,
\item[{\ref{ass}.2)}]
$ x_1 \cdot (x_2+y_2) = x_1 \cdot x_2+x_1  \cdot y_2$,
\item[{\ref{ass}.3)}]
$[x_1 \geq_{X_1} y_1$, $x_2 \geq_{X_2} 0] \Rightarrow [x_{1} \cdot x_{2} \geq_{X} y_{1} \cdot x_{2}]$, 
\item[{\ref{ass}.4)}]
$[x_1 \geq_{X_1} 0$, $x_2 \geq_{X_2} y_2] \Rightarrow [x_{1} \cdot x_{2} \geq_{X} x_{1} \cdot y_{2}]$
for every $x_j$, $y_j \in {\mathbf{X}}_j$, $j=1,2$;
\item[{\ref{ass}.5)}]
 if $(x_n)_n$ is an $(o)$-sequence in  ${\mathbf{X}}_1$ and $y \in {\mathbf{X}}_2^{+}$, then 
$(x_n \cdot y )_n$ is an $(o)$-sequence in ${\mathbf{X}}$;
\item[{\ref{ass}.6)}]
if $x \in {\mathbf{X}}_1^{+}$ and $(y_n)_n$ is an $(o)$-sequence in ${\mathbf{X}}_2$, then 
$(x \cdot y_n)_n$ is an $(o)$-sequence in ${\mathbf{X}}$ (see, e.g., \cite{KAWABE, KAWABE2}),
\end{itemize}
and if   $\mathbf{X}_1$, $\mathbf{X}_2$, $\mathbf{X}$ are endowed  with three convergences, $\ell_1 $,
$\ell_2$, $\ell$, respectively, each of which satisfying 
 Axioms \ref{convergenze} and
the following ``compatibility'' conditions:
\begin{itemize}\label{compatibility}
\item[{\ref{ass}.7)}] 
if $(x_n)_n$ is a sequence in  ${\mathbf{X}}_1$ with $\ell_1((x_n)_n) = 0$ and $y \in {\mathbf{X}}_2$, then 
${\ell}((x_n \cdot y )_n)=0$;
\item[{\ref{ass}.8)}] 
if $x \in {\mathbf{X}}_1$ and $(y_n)_n$ is a sequence in  ${\mathbf{X}}_2$ with $\ell_2 ((y_n)_n) = 0$,
then ${\ell}((x \cdot y_n )_n)=0$;
\item[{\ref{ass}.9)}] 
if $(x_n)_n$ is a sequence in ${\mathbf{X}}_1^+$ with ${\overline{\ell}}_1((x_n)_n) =x$ and $y \in {\mathbf{X}}_2^+$, then 
${\overline{\ell}}((x_n \cdot y )_n)=x \cdot y$;
\item[{\ref{ass}.10)}] 
if $x \in {\mathbf{X}}_1^+$ and $(y_n)_n$ is a sequence in  ${\mathbf{X}}_2^+$ with ${\overline{\ell}}_2 ((y_n)_n) = y$,
then ${\overline{\ell}}((x \cdot y_n )_n)=x \cdot y$.
\end{itemize}
\end{assumption}
A Dedekind complete vector lattice ${\mathbf{X}}$ is said to be a  \emph{product algebra} iff $(\mathbf{X},\mathbf{X},\mathbf{X})$
is a product triple. We endow the real line $\mathbb{R}$ with a convergence $\ell_{\mathbb{R}}$ satisfying 
 Axioms \ref{convergenze} and assume that $\mathbf{X}_1$, $\mathbf{X}_2$, $\mathbf{X}$
%\mg{\tiny ??  diciamo un "Bochner generalizzato", ant.}
are product algebras and  $(\mathbb{R},\mathbf{X},\mathbf{X})$,  $(\mathbb{R},\mathbf{X}_j,\mathbf{X}_j)$, $j=1,2$, are product  triples. 
 Moreover, suppose that there exist 
two product algebras
$\mathbf{X}_1^{\prime}$
and $\mathbf{X}_1^{\prime\prime}$,
 endowed with respective convergences $\ell_1^{\prime}$ and 
$\ell_1^{\prime \prime}$ satisfying
Axioms \ref{convergenze},  such that
$(\mathbf{X}_1^{\prime}, \mathbf{X}_1^{\prime\prime}, 
\mathbf{X}_1)$
is a product triple.
\\

Let $G$ be any  (possibly infinite) nonempty set, ${\mathcal P}(G)$ be the class of all subsets of $G$, ${\mathcal A} \subset {\mathcal P}(G)$ be an algebra,
$\mu:{\mathcal A} \to  \overline{\mathbf{X}}^{+}_2$ be a finitely additive measure. 
We say that $\mu$ is \emph{$\sigma$-finite} iff there is a 
%(increasing) 
sequence  $(B_n)_n$   in ${\mathcal A }$, with  
%\begin{eqnarray}\label{sigmafiniteness}
$\mu(B_n) \in {\mathbf{X}}^{+}_2$, $n \in \mathbb{N}$,  and
%\, \,  \& \,\,   
$\displaystyle{\bigcup_{n\in \mathbb{N}} B_n=G}$. 
  Note that, without loss of generality, the $B_n$'s
can be taken increasing or disjoint. 
\\
%\end{eqnarray}
Now we introduce an integral for $\mathbf{X}_1$-valued functions with respect to a finitely additive and $\sigma$-finite 
$\overline{\mathbf{X}}^+_2$-valued measure $\mu$, related to convergences $\ell_1$, $\ell_2$, $\ell$,
satisfying Axioms \ref{convergenze}. 
The integral will be an element of $\mathbf{X}$.
Similar constructions in the vector lattice setting were given in \cite{BC2009},
where $\mu$ is finite, and in \cite{BCSVITALI}, where $\mu$ is possibly infinite and 
$\mathbf{X}_1$, $\mathbf{X}$ are endowed  with 
two strong order units $e_1$, $e$, respectively. \\

We consider a sequence of functions of finite range and vanishing outside of a set of finite measure $\mu$, as follows.
A function $f:G \to \mathbf{X}_1$ is said to be \emph{simple} iff   it can be expressed as 
%\begin{eqnarray}\label{simple}
$f=\sum_{j=1}^r c_j \, \chi_{A_j},$
%\end{eqnarray}
where $r \in \mathbb{N}$, $c_j \in {\mathbf{X}}_1$, $A_j \in {\mathcal A}$, $\mu(A_j) \in {\mathbf{X}}^{+}_2$ and
$\chi_{A_j}$   is  the characteristic function of the set   $A_j$.
We denote by $\mathscr{S}$
the set  of all simple functions.
For each $A \in {\mathcal A}$ and $f \in \mathscr{S}$, where $f$ is as before, set 
\begin{eqnarray*}\label{intsimple}
\int_A f(g) \, d\mu(g)= \sum_{j=1}^r c_j \, \mu(A \cap A_j).
\end{eqnarray*}
The integral %in (\ref{intsimple})
of a simple function  is a linear and monotone functional, 
and does not depend on the choice of the representation of $f$.  
Now we present the concepts of uniform convergence and convergence in measure.
\begin{definition}\label{measureuniform} \rm 
Let $A \in {\mathcal A}$ and $(f_n)_n$   be
a sequence of functions  in $\mathbf{X}_1^G$ :
\begin{itemize}
\item $(f_n)_n$ is said to be \emph{uniformly convergent} to $f\in \mathbf{X}_1^G$ on $A$
iff  \[\displaystyle{\ell_1 \Bigl(\Bigl( \bigvee_{g \in A} \, \vert f_n(g)-f(g) \vert\Bigr)_n\Bigr)=0};\]
%eliminati gli ultimi vecchi red
\item 
$(f_n)_n$ \em converges in $\mu$-measure \rm to $f\in \mathbf{X}_1^G$ on $A$ iff there is
a sequence $(A_n)_n$ in $\mathcal{A}$, such that $\ell_2((\mu(A \cap A_n))_n)=0$ and 
$\displaystyle{ \ell_1 \Bigl(\Bigl(\bigvee_{g \in A \setminus A_n} \vert f_n(g)-f(g) \vert\Bigr)_n\Bigr)=0}$.
\end{itemize}
\end{definition} 
Observe that uniform convergence implies convergence in measure, and that, when ${\mathbf{X}}_1={\mathbf{X}}_2=\mathbb{R}$ and the involved
convergence is the usual one, the convergences  in Definition  
\ref{measureuniform} 
are equivalent to the classical ones (see, e.g., \cite[Remark 3.3]{BC2009}).
The following result extends \cite[Proposition 3.4]{BC2009} when $\mu$ is not necessarily finite.

\begin{proposition}\label{Mconv}
 Let $A \in {\mathcal A}$. If $(f_n)_n$, $(h_n)_n \in {\mathbf{X}}_1^G$ converge in measure to
$f,h$ on $A$, respectively, and $c \in {\mathbb{R}}$, then $(f_n+h_n)_n$, $(f_n\vee h_n)_n$, $(f_n\wedge
h_n)_n$, $(\vert f_n \vert)_n$ converge in measure on $A$ to $f+h$, $f \vee h$, $f \wedge h
$, $\vert f \vert$, respectively, and $(c \, f_n)_n$ converges in $\mu$-measure to $c \, f$ on $A$.
The same results hold for uniform convergence.
\end{proposition}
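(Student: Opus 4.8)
The plan is to reduce the whole statement to a handful of pointwise inequalities valid in any vector lattice, together with two elementary consequences of Axioms \ref{convergenze}. Call a sequence $(x_n)_n$ in $\mathbf{X}_1$ \emph{$\ell_1$-null} if it lies in the domain of $\ell_1$ and $\ell_1((x_n)_n)=0$ (and similarly for $\ell_2$). The first fact I would use is that the $\ell_1$-null sequences form a vector space, which is immediate from the linearity condition \ref{convergenze}.a). The second is that, if $(y_n)_n$ is $\ell_1$-null and $0\leq_X x_n\leq_X y_n$ definitely, then $(x_n)_n$ is $\ell_1$-null; this follows by applying the sandwich condition \ref{convergenze}.e) to the constant sequence $0$ (which lies in the domain by \ref{convergenze}.c)) and $(y_n)_n$, and then invoking \ref{convergenze}.b). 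Both facts hold verbatim for $\ell_2$. I would treat uniform convergence and convergence in $\mu$-measure simultaneously, the uniform case being the one in which all exceptional sets are empty.

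Next I would record the pointwise estimates valid in any vector lattice: for elements $u,u',v,v'$,
\[|(u+v)-(u'+v')|\leq_X|u-u'|+|v-v'|, \quad |(u\vee v)-(u'\vee v')|\leq_X|u-u'|+|v-v'|,\]
\[|(u\wedge v)-(u'\wedge v')|\leq_X|u-u'|+|v-v'|, \quad \bigl|\,|u|-|u'|\,\bigr|\leq_X|u-u'|,\]
and $|c\,u-c\,u'|=|c|\,|u-u'|$ for $c\in\mathbb{R}$; the two lattice estimates follow from $(u'+s)\vee(v'+t)\leq_X(u'\vee v')+(s\vee t)$ and its dual for $\wedge$. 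I would then fix the exceptional sequences $(A_n)_n$, $(A_n')_n$ from the definition of convergence in $\mu$-measure for $(f_n)_n$ and $(h_n)_n$ (taking them empty in the uniform case), set $B_n:=A_n\cup A_n'\in\mathcal{A}$, and observe that $\mu(A\cap B_n)\leq_X\mu(A\cap A_n)+\mu(A\cap A_n')$ by finite additivity and monotonicity of $\mu$; in particular $\mu(A\cap B_n)\in\mathbf{X}_2^{+}$ (it is not $+\infty$), and by the two facts above $\ell_2((\mu(A\cap B_n))_n)=0$. Substituting $u=f_n(g)$, $u'=f(g)$, $v=h_n(g)$, $v'=h(g)$ in the pointwise estimates and taking the lattice supremum over $g\in A\setminus B_n=(A\setminus A_n)\cap(A\setminus A_n')$ yields, for instance,
\[\bigvee_{g\in A\setminus B_n}\bigl|(f_n+h_n)(g)-(f+h)(g)\bigr|\leq_X\bigvee_{g\in A\setminus A_n}|f_n(g)-f(g)|+\bigvee_{g\in A\setminus A_n'}|h_n(g)-h(g)|,\]
and the analogue with $\vee$ or $\wedge$ in place of $+$; for $|f_n|$ I would keep $(A_n)_n$ and only the first summand on the right, and for $c\,f_n$ I would get $|c|\bigvee_{g\in A\setminus A_n}|f_n(g)-f(g)|$ on the left. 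Each left-hand supremum exists in $\mathbf{X}_1$ because it is order bounded by the corresponding right-hand side, which exists by hypothesis, and $\mathbf{X}_1$ is Dedekind complete.

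To conclude, the right-hand sides above are $\ell_1$-null: for $f_n+h_n$, $f_n\vee h_n$, $f_n\wedge h_n$ because they are sums of the two $\ell_1$-null sequences supplied by the hypotheses; for $|f_n|$ because it is dominated by one of them; and for $c\,f_n$ by linearity, which pulls $|c|$ out of $\ell_1$. By the second fact above, each left-hand side is then $\ell_1$-null, so $(B_n)_n$ (respectively $(A_n)_n$) witnesses that the transformed sequence converges in $\mu$-measure to the corresponding limit; setting all exceptional sets equal to $\emptyset$ gives the statements for uniform convergence. The argument is essentially bookkeeping: the one point that genuinely requires care is the measure step, where passing to $B_n=A_n\cup A_n'$ and using $\mu(A\cap B_n)\leq_X\mu(A\cap A_n)+\mu(A\cap A_n')$ is what keeps the combined exceptional measures both finite (living in $\mathbf{X}_2^{+}$) and $\ell_2$-null even though $\mu$ may be infinite — this is precisely where the argument departs from the finite-measure case of \cite[Proposition 3.4]{BC2009}. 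Everything else is a direct application of the linearity, monotonicity, sandwich and constancy conditions in Axioms \ref{convergenze}, together with Dedekind completeness of $\mathbf{X}_1$.
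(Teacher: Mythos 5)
Your proposal is correct and follows essentially the same route as the paper: form the union $B_n=A_n\cup A_n'$ of the exceptional sets, control $\mu(A\cap B_n)$ by subadditivity together with the linearity and sandwich conditions of Axioms \ref{convergenze}, and reduce each operation to a pointwise lattice inequality of Birkhoff type before taking suprema over $A\setminus B_n$. The only (cosmetic) differences are that you handle $\vert f_n\vert$ via the direct estimate $\bigl|\,|u|-|u'|\,\bigr|\leq_{X_1}|u-u'|$ rather than reducing to the $\vee$ case through $|x|=x\vee(-x)$, and that you explicitly justify the existence of the left-hand suprema via Dedekind completeness, a detail the paper leaves implicit.
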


\begin{proof} 
We consider only convergence in measure, since  the case of uniform convergence is analogous.
We begin with proving the convergence in measure of  the sequence $(f_n+h_n)_n$ to $f+h$ on $A$.
Let $(A_n)_n$ and $(D_n)_n$ be two sequences in  ${\mathcal A}$, related with the convergence in measure  of $(f_n)_n$ 
and $(h_n)_n$ to $f$ and $h$ on $A$, respectively,  and set $E_n=A_n \cup D_n$, $n \in \mathbb{N}$.
Note that $0 \leq_{X_2} \mu(E_n) \leq_{X_2} \mu(A_n)+
\mu(D_n)$, and thus from Axioms  
\ref{convergenze}.a) and \ref{convergenze}.e)  we deduce
\begin{eqnarray}\label{complementunion}
 \ell_2((\mu(E_n))_n)=0. 
\end{eqnarray}
Moreover, for each $n \in \mathbb{N}$ we have
\begin{eqnarray*}\label{sussmayrsum} 
0 &\leq_{X_1}& \Bigl( \bigvee_{g \in A \setminus E_n} \, \nonumber
\vert f_n(g)+ h_n(g)-f(g)- h(g) \vert \Bigr) \leq_{X_1}  \\ &\leq_{X_1}& \Bigl( \bigvee_{g \in A \setminus E_n} \,
\vert f_n(g)-f(g) \vert  \Bigr) + 
 \Bigl( \bigvee_{g \in A \setminus E_n} \, \vert h_n(g)-f(g)\vert \Bigr)  \leq_{X_1} \\ &\leq_{X_1}&
\Bigl( \bigvee_{g \in A \setminus A_n} \, \vert f_n(g)-f(g)\vert \Bigr)  + 
\Bigl( \bigvee_{g \in A \setminus D_n} \, \vert h_n(g)-f(g)\vert \Bigr) . 
\nonumber
\end{eqnarray*}
From the previous inequality
%(\ref{sussmayrsum})
 and  Axioms
\ref{convergenze} it follows that the sequence  $(f_n+h_n)_n$ converges in measure to $f+g$ on $A$. \\
Now we show that the sequence  $(f_n\vee h_n)_n$ converges in measure to $f \vee g$ on $A$.
Taking into account the Birkhoff inequalities (see \cite[Theorem 12.4 (ii)]{LZ}), we get
\begin{eqnarray}\label{sussmayrsup} 
&&0 \leq_{X_1} \Bigl( \bigvee_{g \in A \setminus E_n} \,  \vert  f_n(g) \vee h_n(g)- f(g) \vee h(g) \vert  \Bigr)  \leq_{X_1}  \nonumber  \\
 &&\leq_{X_1} 
\Bigl( \bigvee_{g \in A \setminus E_n} \,  \vert  f_n(g) \vee h_n(g)- f_n(g) \vee h(g) \vert \Bigr)  + \\ 
&&+ \nonumber
\Bigl( \bigvee_{g \in A \setminus E_n} \, \vert  f_n(g) \vee h(g)- f(g) \vee h(g) \vert  \Bigr)  \leq_{X_1} 
\\ &&\leq_{X_1} \nonumber
 \Bigl( \bigvee_{g \in A \setminus A_n} \,\vert f_n(g)-f(g) \vert \Bigr)  + 
\Bigl( \bigvee_{g \in A \setminus D_n} \,\vert h_n(g)-f(g)\vert \Bigr) .\nonumber
\end{eqnarray}
From (\ref{sussmayrsup}) and Axioms \ref{convergenze} again it follows that the sequence  $(f_n \vee h_n)_n$ converges in measure to $f \vee h$ on $A$.
The proof of the convergence in measure of the sequence  $(f_n \wedge h_n)_n$ to $f \wedge h$ is analogous. \\
The convergence in $\mu$-measure of the sequence $(\vert f_n\vert )_n$   to  $\vert f \vert $ follows from the previous results, taking into account 
%thatfor each $x \in {\mathbf{X}}_1$ it is $\vert x \vert=(x \vee 0)+ ((-x) \vee 0)$.
the definition of $\vert x \vert$.  
The proof of the convergence in $\mu$-measure of the sequence  $(c \, f_n)_n$ to $c \, f$ is analogous to the previous ones.
\end{proof}

\begin{definition}\label{L1}
\rm We say that a sequence $(f_n)_n$ in  $\mathscr{S}$ \em converges in $L^1$ \rm to $f \in \mathscr{S}$ iff 
\begin{eqnarray*}
 \ell \left( \left(\int_G \vert f_n(g)-f(g)\vert  \, d\mu(g) \right)_n \, \right)=0. 
\end{eqnarray*}
\end{definition}

\begin{definition}\label{eac}
\rm We say that the integrals  of a sequence of functions $(f_n)_n$ in $\mathscr{S}$
%$\mathbf{X}_1^G$ 
are $\mu$-\emph{equiabsolutely continuous} iff the following two properties hold:
\begin{itemize}
\item[{\rm \ref{eac}.1)}] 
$\displaystyle{\ell \left( \left(\int_{A_n}\,\vert f_n(g)\vert  \, d\mu(g)\right)_n \right)=0}$\,\,  if \,\, $\ell_2((\mu( A_n))_n)=0$;
%%ATTENZIONE! %%PUNTO CRUCIALE!!!!
\item[{\rm \ref{eac}.2)}]
 there is an increasing sequence  $(B_m)_m$ in ${\mathcal A}$ with
$\mu(B_m)\in {\mathbf{X}}_2 $ for all $m \in \mathbb{N}$, and
\begin{eqnarray*}\label{eaceac2}
\ell\Bigl(\Bigl(\overline{\ell}\Bigl( \Bigl(\int_{G \setminus B_m} \vert f_n(g)\vert  \, d\mu(g) \Bigr)_n   \Bigr) \Bigr)_m \Bigr)=0.
\end{eqnarray*}
\end{itemize}
\end{definition}

The next result extends \cite[Theorem 3.7]{BC2009}   when  $\mu$ is  not necessarily finite.

\begin{theorem}\label{tredieci}
Let $f_n \in \mathscr{S}$, $n \in \mathbb{N}$. If $(f_n)_n$ converges in measure to $f \equiv 0$ on every set $A \in {\mathcal A}$ with $\mu(A) \in
{\mathbf{X}}_2$ and their integrals are $\mu$-equiabsolutely continuous, then $(f_n)_n$ converges in $L^1$ to $f \equiv 0$.
\end{theorem}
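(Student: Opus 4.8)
The plan is to separate the r\^oles of the two parts of $\mu$-equiabsolute continuity: condition \ref{eac}.2) will control the ``tail'' integrals $\int_{G\setminus B_m}|f_n|\,d\mu$ along the increasing sequence $(B_m)_m$, while condition \ref{eac}.1) together with the convergence in measure will handle the integral over each fixed set $B_m$ of finite measure. Since, by Definition \ref{L1} with $f\equiv 0$, the goal is $\ell\bigl(\bigl(\int_G|f_n(g)|\,d\mu(g)\bigr)_n\bigr)=0$, I will first establish $\overline{\ell}\bigl(\bigl(\int_G|f_n(g)|\,d\mu(g)\bigr)_n\bigr)=0$ and then invoke Axiom \ref{limsuppresentation}.e).

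First, I would fix $m\in\mathbb{N}$ and treat $\int_{B_m}|f_n|\,d\mu$. Since $\mu(B_m)\in{\mathbf{X}}_2$, I apply the convergence in measure of $(f_n)_n$ to $0$ on $B_m$, obtaining $(A_n)_n$ in ${\mathcal A}$ with $\ell_2((\mu(B_m\cap A_n))_n)=0$ and $\ell_1\bigl(\bigl(\bigvee_{g\in B_m\setminus A_n}|f_n(g)|\bigr)_n\bigr)=0$. Putting $\widetilde A_n:=B_m\cap A_n\in{\mathcal A}$ and splitting $\int_{B_m}|f_n|\,d\mu=\int_{\widetilde A_n}|f_n|\,d\mu+\int_{B_m\setminus\widetilde A_n}|f_n|\,d\mu$, the first summand is $\ell$-null by \ref{eac}.1) applied to $(\widetilde A_n)_n$; for the second I use that, $f_n$ being simple, $0\leq_X\int_{B_m\setminus\widetilde A_n}|f_n|\,d\mu\leq_X\bigl(\bigvee_{g\in B_m\setminus A_n}|f_n(g)|\bigr)\cdot\mu(B_m)$, whence the compatibility Axiom \ref{ass}.7) (with $y=\mu(B_m)$) together with the squeeze Axiom \ref{convergenze}.e) makes it $\ell$-null too. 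Linearity of $\ell$ then gives $\ell\bigl(\bigl(\int_{B_m}|f_n|\,d\mu\bigr)_n\bigr)=0$, and since this sequence lies in ${\mathcal T}^+\cap{\mathcal S}$ (being a sum of two sequences in ${\mathcal S}$), Axiom \ref{limsuppresentation}.d) yields $\overline{\ell}\bigl(\bigl(\int_{B_m}|f_n|\,d\mu\bigr)_n\bigr)=0$.

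Next I peel off the tail. By finite additivity of the simple integral, $\int_G|f_n|\,d\mu=\int_{B_m}|f_n|\,d\mu+\int_{G\setminus B_m}|f_n|\,d\mu$, with all three sequences in ${\mathcal T}^+$, so subadditivity (Axiom \ref{limsuppresentation}.b)) together with the previous step gives, for every $m\in\mathbb{N}$,
\[
\overline{\ell}\Bigl(\Bigl(\int_G|f_n(g)|\,d\mu(g)\Bigr)_n\Bigr)\leq_X \overline{\ell}\Bigl(\Bigl(\int_{G\setminus B_m}|f_n(g)|\,d\mu(g)\Bigr)_n\Bigr)=:t_m .
\]
By \ref{eac}.2) the sequence $(t_m)_m$ lies in ${\mathcal S}$ (in particular every $t_m\in{\mathbf{X}}$) and $\ell((t_m)_m)=0$. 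Hence $x:=\overline{\ell}\bigl(\bigl(\int_G|f_n|\,d\mu\bigr)_n\bigr)$ satisfies $x\leq_X t_m$ for all $m$, so $x\in{\mathbf{X}}$, and Axiom \ref{convergenze}.g) forces $x\leq_X\ell((t_m)_m)=0$; since $x\in\overline{\mathbf{X}}^{+}$, this gives $x=0$. Finally Axiom \ref{limsuppresentation}.e) shows $\bigl(\int_G|f_n|\,d\mu\bigr)_n\in{\mathcal S}$ with $\ell$-limit $0$, which is exactly convergence in $L^1$ to $f\equiv 0$.

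The main obstacle I anticipate is the bookkeeping between $\ell$ and $\overline{\ell}$: one cannot add ``limits'' directly over the set $G$ of possibly infinite measure, so one must first move to $\overline{\ell}$, use its subadditivity to isolate the tail contributed by $G\setminus B_m$, and only at the very end return to an $\ell$-statement through Axiom \ref{limsuppresentation}.e). A secondary, more routine difficulty is matching the set $A_n$ produced by convergence in measure to the precise form required by \ref{eac}.1) (hence the passage to $\widetilde A_n=B_m\cap A_n$) and verifying the domination $\int_{B_m\setminus\widetilde A_n}|f_n|\,d\mu\leq_X\bigl(\bigvee_{g\in B_m\setminus A_n}|f_n(g)|\bigr)\cdot\mu(B_m)$ that lets the compatibility Axiom \ref{ass}.7) take effect.
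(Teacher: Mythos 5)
Your proposal is correct and follows essentially the same route as the paper's proof: the same decomposition of $\int_G\vert f_n\vert\,d\mu$ over $G\setminus B_m$, $B_m\cap A_{m,n}$ and $B_m\setminus A_{m,n}$, with \ref{eac}.2) controlling the tail, \ref{eac}.1) the small sets, the domination by $\bigl(\bigvee_{g\in B_m\setminus A_{m,n}}\vert f_n(g)\vert\bigr)\cdot\mu(B_m)$ plus \ref{ass}.7) for the remainder, and Axiom \ref{limsuppresentation}.e) to convert the vanishing of $\overline{\ell}$ into convergence in $L^1$. The only (immaterial) difference is that you pass to the limit in $m$ via the bound $x\leq_X t_m$ and Axiom \ref{convergenze}.g), whereas the paper takes the iterated limit $\ell_m(\overline{\ell}_n(\cdot))$ of all three terms.
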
 

\begin{proof}
Let $(B_m)_m$ be a sequence related to property 
\ref{eac}.2) of the  $\mu$-equiabsolute continuity 
of the integrals of the $f_n$'s. By the convergence
in measure of $(f_n)_n$ to $0$ on each $B_m$, there is  a double sequence $(A_{m,n})_n$ in ${\mathcal A}$ with
$\ell_2((\mu(B_m \cap A_{m,n}))_n)=0$ and 
$\displaystyle{\ell\Bigl(\Bigl(\bigvee_{g \in B_m \setminus A_{m,n}} \vert f_n(g)\vert \Bigr)_n\Bigr)=0}$
for each $m \in \mathbb{N}$. For every fixed  $m$, $n \in \mathbb{N}$ it is
\begin{eqnarray*}\label{firststep}
	0 & \leq_{X} & %I_n^{(0)} =
 \int_G\vert f_n(g)\vert  \, d\mu(g) \leq_{X}
	\int_{G\setminus B_m}\vert f_n(g)\vert  \, d\mu(g) + 
	\int_{B_m \cap A_{m,n}}\vert f_n(g)\vert  \, d\mu(g)+ \nonumber \\ &+&
	\int_{B_m \setminus A_{m,n}}\vert f_n(g)\vert  \, d\mu(g).%=I_n^{(1)} + I_n^{(2)}+I_n^{(3)}.
\end{eqnarray*}
By \ref{eac}.2), we get 
\begin{eqnarray}\label{n1}
\ell \Bigl( \Bigl( \overline{\ell} \Bigl(\Bigl(\int_{G\setminus B_m}\vert f_n(g)\vert  \, d\mu(g) \Bigr)_n\Bigr)\Bigr)_m\Bigr)=0.
\end{eqnarray}
From \ref{eac}.1) we obtain
\begin{eqnarray*}\label{prelimi}
\ell\Bigl(\Bigl(\int_{B_m \cap A_{m,n}}\vert f_n(g)\vert  \, d\mu(g)\Bigr)_n\Bigr)=0\quad \text{ for  every   } m \in \mathbb{N}.
\end{eqnarray*}
Taking into account 
Axioms \ref{convergenze} and \ref{limsuppresentation}.d), %from (\ref{prelimi})
then we deduce
\begin{eqnarray}\label{n2}
\ell \Bigl( \Bigl( \overline{\ell} \Bigl(\Bigl(\int_{B_m \cap A_{m,n}}\vert f_n(g)\vert  \, d\mu(g) \Bigr)_n\Bigr)\Bigr)_m\Bigr)=0.
\end{eqnarray}

Furthermore, for each $m \in \mathbb{N}$ it is
\begin{eqnarray*}\label{convergenceinmeasure}
	0 \leq_{X}  \int_{B_m \setminus A_{m,n}}\vert f_n(g)\vert  \, d\mu(g) &\leq_{X}& \Bigl(\bigvee_{g \in B_m \setminus A_{n,m}} \vert f_n(g)\vert \Bigr)
	\mu (B_m \setminus A_{n,m}) \leq_{X} \nonumber
	\\ &\leq_{X}& \Bigl(\bigvee_{g \in B_m \setminus A_{n,m}} \vert f_n(g)\vert \Bigr) \mu(B_m).
\end{eqnarray*}  
Since $\mu(B_m) \in \mathbf{X}_2$, from the previous inequality, 
convergence in measure on $B_m$
and  Axioms \ref{convergenze} we   get
\begin{eqnarray}\label{pren3}
\ell\Bigl(\Bigl(\int_{B_m \setminus  A_{m,n}}\vert f_n(g)\vert  \, d\mu(g)\Bigr)_n\Bigr)=0\quad \text{ for  each   } m \in \mathbb{N},
\end{eqnarray}
and hence, analogously as in (\ref{n2}), from (\ref{pren3}) we
  obtain
\begin{eqnarray}\label{n3}
\ell \Bigl( \Bigl( \overline{\ell} \Bigl(\Bigl(\int_{B_m \setminus A_{m,n}}\vert f_n(g)\vert  \, d\mu(g) \Bigr)_n\Bigr)\Bigr)_m\Bigr)=0.
\end{eqnarray}
From (\ref{n1}), (\ref{n2}), (\ref{n3})
 and Axioms \ref{convergenze}, \ref{limsuppresentation} it follows that 
\begin{eqnarray*}\label{conclu}
	0 &\leq_{X}&  \overline{\ell}\Bigl(\Bigl(\int_G\vert f_n(g)\vert  \, d\mu(g) \Bigr)\Bigr)_n\Bigr) = 
	\ell \Bigl( \Bigl( \overline{\ell} \Bigl(\Bigl(\int_G \vert f_n(g)\vert  \, d\mu(g) \Bigr)_n\Bigr)\Bigr)_m \Bigr) \leq_{X} 	\nonumber \\ 
	&\leq_{X}& 
	 \ell \Bigl( \Bigl( \overline{\ell} 
\Bigl(\Bigl(\int_{G\setminus B_m} \vert f_n(g)\vert  d\mu(g) 
\Bigr)_n\Bigr)\Bigr)_m\Bigr) + %\mg{$I_1$}
 \nonumber 
	\ell \Bigl( \Bigl( \ell \Bigl(\Bigl(\int_{B_m \cap A_{m,n}} \vert f_n(g)\vert   d\mu(g) \Bigr)_n\Bigr)\Bigr)_m\Bigr) +   %I_2
\\	&+& \nonumber	\ell \Bigl( \Bigl( \overline{\ell} \Bigl(\Bigl(\int_{B_m \setminus A_{m,n}}\vert f_n(g)\vert  \, d\mu(g) \Bigr)_n \Bigr)\Bigr)_m\Bigr)=0. %I_3
\end{eqnarray*}
From this %(\ref{conclu}) 
and  Axiom
\ref{limsuppresentation}.e) we obtain
$\displaystyle{
{\ell}
\Bigl(\Bigl(\int_G\vert  f_n(g)\vert  \, d\mu(g)\Bigr)\Bigr)_n
\Bigr)=0}$, that is the convergence in $L^1$ to $0$
of the sequence $(f_n)_n$.
\end{proof}
Now we turn to the construction of the  integral. 

\begin{definition} \label{defining} 
\rm Let $f \in {\mathbf{X}}_1^G$.
A sequence $(f_n)_n$ in $\mathscr{S}$ is said to be  \emph{defining for $f$} iff it converges in $\mu$-measure to $f$  on every set $A \in {\mathcal A}$ with $\mu(A) \in
\mathbf{X}_2$ and their  integrals are $\mu$-equiabsolutely continuous.
\end{definition}

\begin{definition}\label{integrabilita} \rm 
A positive function $f \in {\mathbf{X}}_1^G$ is said to be \em integrable \rm on $G$ iff there exist a defining sequence $(f_n)_n$ 
for $f$ and a map $l:{\mathcal A} \to \mathbf{X}$, with
\begin{eqnarray}\label{l} 
	\ell \Bigl( \Bigl(\bigvee_{A\in {\mathcal A}}\Bigl\vert \int_A f_n(g) \, d\mu(g) - l(A)\Bigr\vert \Bigr)_n \Bigr)=0,
\end{eqnarray} 
and in this case we set 
\begin{eqnarray}\label{integral}
	\int_A f(g) \, d\mu(g):=l(A) \quad \text{ for  every  } A \in {\mathcal A}.
\end{eqnarray}
\end{definition}
Now we prove that the integral in (\ref{integral}) is well-defined, extending \cite[Proposition 3.11]{BC2009} and 
\cite[Theorem 3.5]{BCSVITALI}.

\begin{proposition}\label{welldefined} 
For every $A \in {\mathcal A}$, it is
	$l(A):= \ell \left( \left(\displaystyle{\int_A} \, f_n(g)\,d\mu(g) \right)_n \right)$,
and  $l(A)$
does not depend on the choice of the defining sequence.
\end{proposition}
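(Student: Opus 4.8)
The plan is to split the statement into two claims. \emph{Claim (i):} for the given defining sequence $(f_n)_n$ and map $l$, the sequence $\bigl(\int_A f_n(g)\,d\mu(g)\bigr)_n$ belongs to $\mathcal{S}$ and $\ell\bigl(\bigl(\int_A f_n(g)\,d\mu(g)\bigr)_n\bigr)=l(A)$ for every $A\in\mathcal{A}$. \emph{Claim (ii):} this value coincides for any two defining sequences, so it does not depend on the choice.

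For Claim (i), fix $A\in\mathcal{A}$ and set $x_n:=\int_A f_n(g)\,d\mu(g)-l(A)$. Taking $A'=A$ in the supremum of (\ref{l}) gives $0\leq_X\vert x_n\vert\leq_X\bigvee_{A'\in\mathcal{A}}\bigl\vert\int_{A'}f_n(g)\,d\mu(g)-l(A')\bigr\vert$; since the null sequence lies in $\mathcal{S}$ with limit $0$ (Axiom \ref{convergenze}.c)) and the right-hand sequence lies in $\mathcal{S}$ with limit $0$ by (\ref{l}), Axioms \ref{convergenze}.b) and \ref{convergenze}.e) give $(\vert x_n\vert)_n\in\mathcal{S}$ and $\ell((\vert x_n\vert)_n)=0$. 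Then $-\vert x_n\vert\leq_X x_n\leq_X\vert x_n\vert$, and as $(-\vert x_n\vert)_n\in\mathcal{S}$ with limit $0$ by linearity, Axiom \ref{convergenze}.e) yields $(x_n)_n\in\mathcal{S}$ and $\ell((x_n)_n)=0$; adding the constant sequence $l(A)$ (Axiom \ref{convergenze}.c)) and using linearity gives $\ell\bigl(\bigl(\int_A f_n\,d\mu\bigr)_n\bigr)=l(A)$.

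For Claim (ii), let $(f_n)_n$, $(f_n')_n$ be two defining sequences for $f$, with associated maps $l$, $l'$. The key step is to show that $(f_n-f_n')_n$, a sequence in $\mathscr{S}$, converges in $L^1$ to $0$, by Theorem \ref{tredieci}. Convergence in $\mu$-measure of $(f_n-f_n')_n$ to $0$ on every $A$ with $\mu(A)\in\mathbf{X}_2$ follows from Proposition \ref{Mconv} applied to $(f_n)_n$ converging in measure to $f$ and $((-1)\,f_n')_n$ converging in measure to $-f$. For $\mu$-equiabsolute continuity of the integrals of $(f_n-f_n')_n$: property \ref{eac}.1) follows from $\vert f_n-f_n'\vert\leq_{X_1}\vert f_n\vert+\vert f_n'\vert$, monotonicity and linearity of the simple integral, and Axioms \ref{convergenze}; for property \ref{eac}.2) take $B_m'':=B_m\cup B_m'$, where $(B_m)_m$, $(B_m')_m$ are the exhausting sequences from \ref{eac}.2) for $(f_n)_n$ and $(f_n')_n$ (note $(B_m'')_m$ is increasing and $0\leq_{X_2}\mu(B_m'')\leq_{X_2}\mu(B_m)+\mu(B_m')\in\mathbf{X}_2$, so $\mu(B_m'')\in\mathbf{X}_2$), and use $\int_{G\setminus B_m''}\vert f_n-f_n'\vert\,d\mu\leq_X\int_{G\setminus B_m}\vert f_n\vert\,d\mu+\int_{G\setminus B_m'}\vert f_n'\vert\,d\mu$ together with subadditivity and monotonicity of $\overline{\ell}$ (Axioms \ref{limsuppresentation}.b), \ref{limsuppresentation}.c)) and the axioms for $\ell$. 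Theorem \ref{tredieci} then gives $\ell\bigl(\bigl(\int_G\vert f_n-f_n'\vert\,d\mu\bigr)_n\bigr)=0$; for each $A\in\mathcal{A}$ the elementary properties of the simple integral give $0\leq_X\bigl\vert\int_A f_n\,d\mu-\int_A f_n'\,d\mu\bigr\vert=\bigl\vert\int_A(f_n-f_n')\,d\mu\bigr\vert\leq_X\int_G\vert f_n-f_n'\vert\,d\mu$, so $\ell\bigl(\bigl(\int_A f_n\,d\mu-\int_A f_n'\,d\mu\bigr)_n\bigr)=0$ by Axioms \ref{convergenze}; combining with Claim (i) and linearity of $\ell$ gives $l(A)-l'(A)=0$.

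The step I expect to be the main obstacle is the verification of \ref{eac}.2) for $(f_n-f_n')_n$: one must exhibit a single increasing exhaustion of $G$ serving both sequences simultaneously, and then control $\overline{\ell}$ of a sum, which is only subadditive, so the outer operator $\ell$ is fed an inequality rather than an identity. A secondary technical point, recurring throughout, is checking that each auxiliary sequence (the $(\vert x_n\vert)_n$, the $\bigl(\int_A f_n\,d\mu\bigr)_n$, and the sequences of $\overline{\ell}$-values indexed by $m$) actually belongs to the domain $\mathcal{S}$ before $\ell$ is applied; this is handled uniformly by the squeeze Axiom \ref{convergenze}.e) and Axioms \ref{limsuppresentation}.d), \ref{limsuppresentation}.e).
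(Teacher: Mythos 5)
Your proposal is correct and follows essentially the same route as the paper: both reduce the independence claim to showing that the difference of the two defining sequences converges in $L^1$ to $0$ via Theorem \ref{tredieci}, and then conclude by the triangle inequality and Axioms \ref{convergenze}. You are merely more explicit than the paper in two places it glosses over — the squeeze argument establishing that $\bigl(\int_A f_n\,d\mu\bigr)_n\in\mathcal{S}$ with limit $l(A)$, and the verification (via $B_m''=B_m\cup B_m'$ and subadditivity of $\overline{\ell}$) that the difference sequence inherits $\mu$-equiabsolutely continuous integrals.
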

\begin{proof} We   prove that the assertion 
is a consequence of Definition \ref{integrabilita} 
(formula (\ref{l})) 
and Axioms \ref{convergenze}.  For $j=1,2$, let 
$(f_n^{(j)})_n$ be two defining sequences for $f$, put
\begin{eqnarray*}\label{16bis}
	l^{(j)} (A) :=\ell \left( \left(\int_A \, f_n^{(j)} (g)\,d\mu(g)\right)_n \right)\quad  \text{for each  } \,\,  A \in {\mathcal A},
\end{eqnarray*} 
and set $q_n(g)=\vert f_n^{(1)}(g)-f_n^{(2)}(g) \vert$, $g \in G$, $n \in \mathbb{N}$.
%It is not difficult to check that
 The sequence $(q_n)_n$ converges in $\mu$-measure to $0$ on every set $A\in {\mathcal A}$ with $\mu(A) \in {\mathbf{X}}_2$.
%because $(f_n^j)_n$, $j=1,2$, converge in  $\mu$ measure to $f$.
Moreover, the integrals of $(f_n^{(j)})_n$, $j=1,2$, 
and thus also those of the $q_n$'s, are 
 $\mu$-equiabsolutely continuous. Thus, by Theorem \ref{tredieci}, the sequence
$(q_n)_n$ converges to $0$ in $L^1$. Moreover we get, for every $n \in \mathbb{N}$,
\begin{eqnarray}\label{l1l2} \nonumber
	0 &\leq_{X}& \vert l^{(1)}(A)-l^{(2)}(A)\vert \leq_{X} \left\vert\int_A f_n^{(1)}(g) d\mu(g)-l^{(1)}(A)\right\vert +  \\ &+&\left\vert l^{(2)}(A)- \int_A f_n^{(2)}(g) d\mu(g)\right\vert +
  \left\vert \int_A f_n^{(1)}(g) \, d\mu(g) -
	\int_A f_n^{(2)}(g) \, d\mu(g)\right\vert  \leq_{X} \\ &\leq_{X}& 
	\left\vert \int_A f_n^{(1)}(g) \, d\mu(g)-l^{(1)}(A)\right\vert +
 \left\vert l^{(2)}(A)- \int_A 
f_n^{(2)}(g) d\mu(g)\right\vert + \nonumber \int_G q_n(g) d\mu(g). \nonumber
\end{eqnarray} 
From %(\ref{16bis}), 
the definition of $l^{(j)}$,  formula
(\ref{l1l2}), the convergence in $L^1$ to $0$ of $(q_n)_n$ and 
 Axioms \ref{convergenze}.a), \ref{convergenze}.b),
 \ref{convergenze}.g) it follows that 
$\vert l^{(1)}(A)-l^{(2)}(A) \vert=0$,  namely $l^{(1)}(A)=l^{(2)}(A)$, for all $A \in {\mathcal A}$. 
\end{proof}

Now we define our integral for not necessarily positive functions.
\begin{definition}\label{integrabilitaa} \rm
A function $f: G \to {\mathbf{X}}_1$ is said to be \emph{integrable} on $G$ iff the functions $f^+(g)=f(g)\vee 0$, $f^-(g)=(-f(g))\vee 0$, $g \in G$, are integrable on $G$, 
and in this case we set
\[\int_A f(g) \, d\mu(g) = \int_A f^+(g) \, d\mu(g) - \int_A f^-(g) \, d\mu(g)\quad \text{for every } \,\,A \in {\mathcal A}.\]
\end{definition}

\begin{remark}\label{pf}
\begin{itemize}
\item[\ref{pf}.a)] %It is not difficult to see that
Since $\vert f(g) \vert =f^+(g)+f^-(g)$ for every $g \in G$, if $(f_n)_n$ and $(h_n)_n$ are two sequences
of functions defining for $f^+$ and $f^-$, respectively, then $(f_n+h_n)_n$ is a sequence of functions defining for $\vert f \vert $.
From this it follows that, if $f$ is integrable, then $\vert f \vert $ is integrable too, and
\[\int_A \vert f(g) \vert  \, d\mu(g) = \int_A f^+(g) \, d\mu(g) +  \int_A f^-(g) \, d\mu(g) \quad \text{for each  } \, A \in {\mathcal A}.\]
Moreover, note that
% it is not difficult to check that
the integral defined in \ref{integrabilitaa}  is a linear positive ${\mathbf{X}}$-valued functional.

\item[\ref{pf}.b)]
 Observe that, if $v_{1} \in \mathbf{X}_1$ and  $h:G \to \mathbb{R}$ is integrable, then the function 
$f:G  \to  \mathbf{X}_1$ defined by   $f(g)=(h(g)) \, v_{1}, \,\, g \in G$, is integrable, and 
\begin{eqnarray*}\label{inthf}
	\int_A f(g) \, d\mu(g)= \Bigl( \int_A h(g) \, d\mu(g) \Bigr) \, v_{1}\quad \text{for every  } \, A \in {\mathcal A}.
\end{eqnarray*}
Indeed,  taking into account {\ref{ass}.7)}, this
%(\ref{inthf})
follows from the fact that, is $(h_n)_n$ is a defining  sequence for $h$, then $(h_n \, v_{1})_n$ is a defining 
sequence for $f$.

\item[\ref{pf}.c)] 
If $\mu$ is finite, then the integrals  given in \cite{BC2009} and in  Definition \ref{integrabilitaa} coincide 
(indeed, it will be enough to take $B_m=G$  for each $m \in \mathbb{N})$. 

\item[\ref{pf}.d)] 
The notions of convergence in $L^1$ and $\mu$-equiabsolute continuity can be given also for 
sequences of integrable functions, analogously as in  Definitions \ref{L1} and \ref{eac}, respectively.

\item[\ref{pf}.e)] 
 Moreover, we observe that,  if $f$ and $h$ integrable
and $\alpha \in \mathbb{R}$, then $f+h$ and
$\alpha \, f$ are integrable too, thanks to the properties of the defining sequences and the linearity of $\ell$ (Axiom \ref{convergenze}.a)).

\item[\ref{pf}.f)]

If $f$ is non negative and integrable, then, by Proposition \ref{Mconv}, its integral is positive too. 
In fact, if $(f_n)_n$ $\mu$-converges to $f$, then $(f_n^+)_n=(f_n \vee 0)_n$ $\mu$-converges to $f \vee 0=f$.
In addition, 
$\vert f_n^+\vert  =f_n^+ \leq_{X_1} \vert f_n \vert$, for every $n \in \mathbb{N}$, and then we obtain the
$\mu$-equiabsolute continuity of the $f_n^+$ integrals, which ensures that $(f_n^+)_n$ is also defining. 
 Let $l: \mathcal{A} \to \mathbf{X}$ be related to
the integrability of $f$.
%By the integrability of $f$ we have $l: \mathcal{A} \to \mathbf{X}$. 
Since $l$ does not depend on the defining sequence, we obtain that $l$ 
is also the limit of  the $f_n^+$'s integrals, which are non negative. Hence,  
by Axiom \ref{convergenze}.b), we obtain
that $l \geq_{X} 0$. These properties imply also that the integral is monotone (that is for every $f,g$ integrable  functions with $ f \leq_{X_1} g$ 
 it is $\displaystyle{
\int_G f d\mu \leq_{X} \int_G g d\mu}$ \, ).
\end{itemize}
\end{remark}

The next step is to prove the $\mu$-absolute continuity of the integral.
The following result extends \cite[Theorem 3.14]{BC2009}.
\begin{theorem}\label{ac}
Let $f : G \to {\mathbf{X}}_1$ be an integrable function. Then, the integral  $\displaystyle{\int_{(\cdot)} f(g) \, d\mu(g)}$ is $\mu$-absolutely continuous, that is
\begin{itemize}
\item[{\rm \ref{ac}.1)}] 
$\displaystyle{ \ell \left( \left(\int_{A_n}\,\vert f(g) \vert  \, d\mu(g)\right)_n \right)=0}$\,\,  whenever  \,\, $\ell_2((\mu( A_n))_n)=0$;
\item[{\rm \ref{ac}.2)}] 
there exists an increasing sequence  $(B_m)_m$ in ${\mathcal A}$, such that 
%$\displaystyle{\bigcup_{m\in \mathbb{N}} B_m=G}$,
$\mu(B_m)\in {\mathbf{X}}_2 $ for all $m \in \mathbb{N}$, and
%a strictly increasing sequence $(n_m)_m $ in $\mathbb{N}$, with
\begin{eqnarray*}\label{acac2}
	\ell\Bigl(\Bigl(\int_{G \setminus B_m} \vert f(g) \vert  \, d\mu(g)  \Bigr)_m \Bigr)=0.
\end{eqnarray*}
\end{itemize}
\end{theorem}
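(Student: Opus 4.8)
The plan is to reduce to a non-negative integrand and then to transfer the two conditions that define $\mu$-equiabsolute continuity (Definition \ref{eac}) from a defining sequence of simple functions to the integral itself; the bridge is the uniform estimate in formula (\ref{l}) together with Axioms \ref{convergenze} and \ref{limsuppresentation}. First I would put $h:=\vert f\vert$, which is integrable and non-negative by Remark \ref{pf}.a), and observe that $\int_A\vert f(g)\vert\,d\mu(g)=\int_A h(g)\,d\mu(g)$ for every $A\in\mathcal{A}$, so that it suffices to prove \ref{ac}.1) and \ref{ac}.2) for $h$. Fix a defining sequence $(h_n)_n$ for $h$ and the associated map $l:\mathcal{A}\to\mathbf{X}$ of Definition \ref{integrabilita}, and set $s_n:=\bigvee_{A\in\mathcal{A}}\bigl\vert\int_A h_n(g)\,d\mu(g)-l(A)\bigr\vert$, so that $(s_n)_n\in\mathcal{S}\cap\mathcal{T}^{+}$ and $\ell((s_n)_n)=0$ by (\ref{l}). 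Since $h\geq 0$ we have $l(A)=\int_A h(g)\,d\mu(g)\geq_X 0$ for every $A\in\mathcal{A}$ (Remark \ref{pf}.f)), and, $(h_n)_n$ being defining, the integrals of the $h_n$'s are $\mu$-equiabsolutely continuous (Definition \ref{defining}).

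For \ref{ac}.1), let $(A_n)_n$ be a sequence in $\mathcal{A}$ with $\ell_2((\mu(A_n))_n)=0$. For each $n\in\mathbb{N}$ I would use the estimate
\[0\leq_X l(A_n)=\vert l(A_n)\vert\leq_X\Bigl\vert l(A_n)-\int_{A_n}h_n(g)\,d\mu(g)\Bigr\vert+\Bigl\vert\int_{A_n}h_n(g)\,d\mu(g)\Bigr\vert\leq_X s_n+\int_{A_n}\vert h_n(g)\vert\,d\mu(g),\]
where the last inequality follows from monotonicity and linearity of the integral of simple functions (whence $\vert\int_{A_n}h_n\,d\mu\vert\leq_X\int_{A_n}\vert h_n\vert\,d\mu$) and from the definition of $s_n$. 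Now $\ell((s_n)_n)=0$, while $\ell\bigl((\int_{A_n}\vert h_n(g)\vert\,d\mu(g))_n\bigr)=0$ by property \ref{eac}.1) applied to the $h_n$'s; adding up (Axiom \ref{convergenze}.a)) and squeezing from below by the zero sequence (Axiom \ref{convergenze}.e)) yields $\ell((l(A_n))_n)=0$, that is \ref{ac}.1).

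For \ref{ac}.2) I would take the increasing sequence $(B_m)_m$ in $\mathcal{A}$, with $\mu(B_m)\in\mathbf{X}_2$, provided by property \ref{eac}.2) for the $h_n$'s. Fixing $m$, the same computation gives, for every $n\in\mathbb{N}$,
\[0\leq_X l(G\setminus B_m)\leq_X s_n+\int_{G\setminus B_m}\vert h_n(g)\vert\,d\mu(g).\]
Here one cannot pass to the $\ell$-limit in $n$, since the sequence $\bigl(\int_{G\setminus B_m}\vert h_n\vert\,d\mu\bigr)_n$ need not be $\ell$-convergent; this is precisely why $\overline{\ell}$ occurs in \ref{eac}.2), and I expect this passage to be the main obstacle. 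Applying $\overline{\ell}$ in the variable $n$, using that $\overline{\ell}$ of the constant sequence $(l(G\setminus B_m))_n$ equals $l(G\setminus B_m)$ (Axiom \ref{limsuppresentation}.d)), together with monotonicity (Axiom \ref{limsuppresentation}.c)), subadditivity (Axiom \ref{limsuppresentation}.b)) and $\overline{\ell}((s_n)_n)=\ell((s_n)_n)=0$ (Axiom \ref{limsuppresentation}.d)), I obtain
\[0\leq_X l(G\setminus B_m)\leq_X\overline{\ell}\Bigl(\Bigl(\int_{G\setminus B_m}\vert h_n(g)\vert\,d\mu(g)\Bigr)_n\Bigr)=:\tau_m\qquad(m\in\mathbb{N}).\]
By property \ref{eac}.2) one has $\ell((\tau_m)_m)=0$, hence a final squeeze from below by the zero sequence (Axiom \ref{convergenze}.e)) gives $\ell((l(G\setminus B_m))_m)=0$, which is \ref{ac}.2). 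Apart from the correct bookkeeping of the double-indexed sequences and the careful use of the $\overline{\ell}$-axioms in the inner limit superior of \ref{ac}.2), the argument is a routine chase through Axioms \ref{convergenze}.
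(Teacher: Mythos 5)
Your proposal is correct and follows essentially the same route as the paper's proof: reduce to a non-negative integrand, bound $\int_{A_n}\vert f\vert\,d\mu$ and $\int_{G\setminus B_m}\vert f\vert\,d\mu$ by the uniform defining-sequence error $\bigvee_{A\in\mathcal{A}}\bigl\vert\int_A f_n\,d\mu-l(A)\bigr\vert$ plus the corresponding integral of $\vert f_n\vert$, and then invoke \ref{eac}.1), \ref{eac}.2) and Axioms \ref{convergenze}, \ref{limsuppresentation} (with $\overline{\ell}$ in the inner variable for part \ref{ac}.2), exactly as the paper does). The only cosmetic difference is that you pass to $h=\vert f\vert$ where the paper assumes $f\geq_{X_1}0$ without loss of generality; your identification of the $\overline{\ell}$ step as the crux of \ref{ac}.2) matches the paper's argument.
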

\begin{proof}
Without loss of generality, we can suppose that $f \geq_{X_1} 0$. Let $(f_n)_n$ be a defining sequence for $f$.
We begin with proving \ref{ac}.1). Let $(A_n)_n$ be a sequence of elements 
of ${\mathcal A}$ with  $\ell_2((\mu(A_n))_n)=0$.  For each $n \in \mathbb{N}$, it is
\begin{eqnarray}\label{l0}
	 0 &\leq_{X}&  
%I_n^0=
 \nonumber \int_{A_n} \, f(g)\, d\mu(g) \leq_{X} \Bigl\vert  \int_{A_n} f(g) \,
	d\mu(g) -\int_{A_n} f_n(g) \,d\mu(g) \Bigr\vert  +  \Bigl\vert  \int_{A_n} f_n(g) \, d\mu(g)\Bigr\vert  \leq_{X} \\   
	&\leq_{X}& \bigvee_{A \in {\mathcal A}} \, \left\vert  \int_A f(g) \, d\mu(g) -\int_A f_n(g) \,d\mu(g) \right\vert  + \int_{A_n} \vert f_n(g)\vert  \, d\mu(g).
%=I_n^1+I_n^2.
\end{eqnarray}
By (\ref{l}), we get 
\begin{eqnarray}\label{l1} 
\ell \Bigl( \Bigl(\bigvee_{A \in {\mathcal A}} \, \Bigl\vert  \int_A f(g) \, d\mu(g) -\int_A f_n(g) \,d\mu(g) \Bigr\vert \Bigr)_n\Bigr)=0.
\end{eqnarray}
From condition \ref{eac}.1) of $\mu$-equiabsolute continuity of  the integrals of the $f_n$'s,   we get 
\begin{eqnarray}\label{l2}
	\ell\Bigl(\Bigl(\int_{A_n} \vert f_n(g)\vert  \, d\mu(g)\Bigr)_n\Bigr)=0.
\end{eqnarray}
From (\ref{l0}), (\ref{l1}), (\ref{l2}) and 
 Axioms \ref{convergenze} we obtain
%\begin{eqnarray}\label{lfin}
$\displaystyle{\ell \Bigl(\Bigl( \int_{A_n} \, f(g)\, d\mu(g) 
\Bigr)_n\Bigr)=0}$, \, that is  condition \ref{ac}.1).
\\
Now we turn to \ref{ac}.2). Let $(f_n)_n$ be a defining sequence for $f$ and $(B_m)_m$ be a sequence from ${\mathcal A}$, according to
condition \ref{eac}.2) of the $\mu$-equiabsolute continuity of  the integrals of the $f_n$'s.  For every $n$, $m \in \mathbb{N}$, it is
\begin{eqnarray*}\label{l3prepre} 
	0 &\leq_{X} &  \nonumber \int_{G \setminus B_m}f(g)\, d\mu(g) \leq_{X} \Bigl\vert  \int_{G \setminus B_m} f(g) \,  d\mu(g) -\int_{G \setminus B_m} f_n(g) \,d\mu(g) \Bigr\vert + \\ 
	&+&  \Bigl\vert  \int_{G \setminus B_m}f_n(g) \, d\mu(g)\Bigr\vert  \leq_{X}  \bigvee_{A \in {\mathcal A}} \, \left\vert  \int_Af(g) \, d\mu(g) -\int_A f_n(g) \,d\mu(g) \right\vert  + 
	\nonumber
	\\ &+& 
	\int_{G \setminus B_m} \vert f_n(g)\vert  \, d\mu(g). %=  J_1 + J_2. 
\end{eqnarray*}
By the definition of integrability (\ref{l}), it is
\[\ell \Bigl( \Bigl( 
\bigvee_{A \in {\mathcal A}} \, \Bigl\vert  \int_Af(g) \, d\mu(g) -\int_A f_n(g) \,d\mu(g) \Bigr\vert  \Bigr)_n\Bigr)=0.\]
Thus, taking in 
%(\ref{l3prepre}) 
the previous inequality the limit $\ell$ and the limit superior $\overline{\ell}$
as $n$ tends to $+\infty$, by virtue of Axioms \ref{convergenze} and \ref{limsuppresentation} we obtain
\begin{eqnarray*}\label{I0I1I2}
0&\leq_{X}&  \ell\Bigl(\Bigl( \int_{G \setminus B_m}f(g)\, d\mu(g)\Bigr)_n\Bigr) \leq_{X} \ell\Bigl(\Bigl( \bigvee_{A \in {\mathcal A}} \, \Bigl\vert  \int_Af(g) \, d\mu(g) -\int_A f_n(g) \,d\mu(g) \Bigr\vert  \Bigr)_n\Bigr) + \\ &+&
\overline{\ell}\Bigl(\Bigl(
\int_{G \setminus B_m} \vert f_n(g)\vert  \, d\mu(g)\Bigr)_n\Bigr)=
	\overline{\ell}
\Bigl(\Bigl(\int_{G \setminus B_m} \vert f_n(g)\vert  \, d\mu(g)\Bigr)_n\Bigr) \quad \text{for    each    }m \in \mathbb{N}.
\end{eqnarray*}
 Taking the
%in (\ref{I0I1I2}) 
limit $\ell$ as $m$ tends to $+\infty$,  by condition \ref{eac}.2)  we have
\begin{eqnarray*}
0 \leq_{X} \ell\Bigl(\Bigl( \int_{G \setminus B_m}f(g)\, d\mu(g) \Bigr)_m\Bigr) \leq_{X} \ell \Bigl( \Bigl( \overline{\ell} 
\Bigl(\Bigl(\int_{G \setminus B_m} \vert f_n(g)\vert  \, d\mu(g)
\Bigr)_n \Bigr)\Bigr)_m\Bigr)=0,
\end{eqnarray*}
that is 
$\displaystyle{\ell\Bigl(\Bigl( \int_{G \setminus B_m}f(g)\, d\mu(g) \Bigr)_m\Bigr)=0}$. This ends the proof.
\end{proof}
 We now give two convergence results   for integrable functions.
\begin{theorem}\label{vitali0} {\rm (Vitali)}
Let $(f_n)_n$ be a sequence of integrable functions 
in ${\mathbf{X}}_1^G$, convergent in measure to $0$ 
on every set $A \in {\mathcal A}$ with $\mu(A) \in {\mathbf{X}}_2$ and with $\mu$-equiabsolutely continuous integrals. Then $(f_n)_n$ converges in $L^1$ to $0$.
\end{theorem}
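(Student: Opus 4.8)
The plan is to run, almost verbatim, the proof of Theorem~\ref{tredieci}, replacing ``simple'' by ``integrable'' throughout, and to invoke Remark~\ref{pf} to guarantee that the elementary facts about the integral of simple functions used there remain available for integrable functions. Concretely, the only features of $f_n$ used in the proof of Theorem~\ref{tredieci} are: that $E \mapsto \int_E |f_n|\,d\mu$ is a non-negative, monotone, finitely additive $\mathbf X$-valued set function on $\mathcal A$, and the crude estimate $\int_E |f_n|\,d\mu \leq_X (\bigvee_{g\in E}|f_n(g)|)\,\mu(E)$; the former follows from monotonicity of the integral (Remark~\ref{pf}.f)) together with linearity of $\ell$ (Axiom~\ref{convergenze}.a)) applied to a defining sequence for $|f_n|$, whose set integrals already have these properties, and the latter is discussed below.

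First I would reduce to $f_n \geq_{X_1} 0$. Indeed $|f_n|$ is integrable (Remark~\ref{pf}.a)), the sequence $(|f_n|)_n$ converges in $\mu$-measure to $0$ on every $A\in\mathcal A$ with $\mu(A)\in\mathbf X_2$ (Proposition~\ref{Mconv}), its integrals are $\mu$-equiabsolutely continuous (the conditions in Definition~\ref{eac} are already phrased with $|f_n|$), and $\int_G|f_n-0|\,d\mu=\int_G|f_n|\,d\mu$; so it suffices to prove $\ell\bigl(\bigl(\int_G|f_n|\,d\mu\bigr)_n\bigr)=0$, and we may assume $f_n\geq_{X_1}0$.

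Then, exactly as in the proof of Theorem~\ref{tredieci}: let $(B_m)_m$ be an increasing sequence in $\mathcal A$ with $\mu(B_m)\in\mathbf X_2$ given by condition~\ref{eac}.2) for the $f_n$'s, and, from convergence in $\mu$-measure of $(f_n)_n$ to $0$ on each $B_m$, pick a double sequence $(A_{m,n})_n$ in $\mathcal A$ with $\ell_2\bigl(\bigl(\mu(B_m\cap A_{m,n})\bigr)_n\bigr)=0$ and $\ell\bigl(\bigl(\bigvee_{g\in B_m\setminus A_{m,n}}|f_n(g)|\bigr)_n\bigr)=0$ for each $m$ (note that, by the very definition of convergence in $\mu$-measure, each $\bigvee_{g\in B_m\setminus A_{m,n}}|f_n(g)|$ is a genuine element of $\mathbf X_1^+$). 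For all $m,n$ one has, by finite additivity and monotonicity of the integral of the non-negative integrable function $|f_n|$,
\[0\leq_X\int_G|f_n(g)|\,d\mu(g)\leq_X\int_{G\setminus B_m}|f_n(g)|\,d\mu(g)+\int_{B_m\cap A_{m,n}}|f_n(g)|\,d\mu(g)+\int_{B_m\setminus A_{m,n}}|f_n(g)|\,d\mu(g).\]
Condition~\ref{eac}.2) handles the first summand as in Theorem~\ref{tredieci}; condition~\ref{eac}.1) gives $\ell\bigl(\bigl(\int_{B_m\cap A_{m,n}}|f_n|\,d\mu\bigr)_n\bigr)=0$ for each $m$, handling the second; for the third I would use
\[0\leq_X\int_{B_m\setminus A_{m,n}}|f_n(g)|\,d\mu(g)\leq_X\Bigl(\bigvee_{g\in B_m\setminus A_{m,n}}|f_n(g)|\Bigr)\mu(B_m\setminus A_{m,n})\leq_X\Bigl(\bigvee_{g\in B_m\setminus A_{m,n}}|f_n(g)|\Bigr)\mu(B_m),\]
which is legitimate since $|f_n|\,\chi_{B_m\setminus A_{m,n}}\leq_{X_1}\bigl(\bigvee_{g\in B_m\setminus A_{m,n}}|f_n(g)|\bigr)\chi_{B_m\setminus A_{m,n}}$, the right-hand side being an honest simple function (its coefficient lies in $\mathbf X_1^+$ and $\mu(B_m\setminus A_{m,n})\leq_{X_2}\mu(B_m)\in\mathbf X_2$), so monotonicity of the integral (Remark~\ref{pf}.f)) applies; together with $\mu(B_m)\in\mathbf X_2$ and convergence in $\mu$-measure this yields $\ell\bigl(\bigl(\int_{B_m\setminus A_{m,n}}|f_n|\,d\mu\bigr)_n\bigr)=0$ for each $m$. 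Applying $\overline\ell$ in $n$ and then $\ell$ in $m$ to the first displayed inequality, and using Axioms~\ref{convergenze} and~\ref{limsuppresentation} (subadditivity, monotonicity, \ref{limsuppresentation}.d)), all three contributions vanish, so $\overline\ell\bigl(\bigl(\int_G|f_n|\,d\mu\bigr)_n\bigr)=0$; Axiom~\ref{limsuppresentation}.e) then gives $\ell\bigl(\bigl(\int_G|f_n|\,d\mu\bigr)_n\bigr)=0$, i.e.\ $(f_n)_n$ converges in $L^1$ to $0$.

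I expect the only genuine (and minor) obstacle to be exactly the bookkeeping flagged above: verifying that the elementary estimates of Theorem~\ref{tredieci}, which there rely on $f_n$ having finite range, survive when $f_n$ is merely integrable. Finite additivity and monotonicity of $E\mapsto\int_E|f_n|\,d\mu$ are contained in Remark~\ref{pf}, while the bound $\int_E|f_n|\,d\mu\leq_X(\bigvee_{g\in E}|f_n(g)|)\,\mu(E)$ is saved by the observation that convergence in $\mu$-measure is meaningful only when the suprema $\bigvee_{g\in B_m\setminus A_{m,n}}|f_n(g)|$ are finite elements of $\mathbf X_1^+$, so that $\bigl(\bigvee_{g\in B_m\setminus A_{m,n}}|f_n(g)|\bigr)\chi_{B_m\setminus A_{m,n}}$ is a legitimate simple function dominating $|f_n|\,\chi_{B_m\setminus A_{m,n}}$. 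Modulo this, the computation is word for word that of Theorem~\ref{tredieci}; in particular no subsequence extraction is used, which is essential since the abstract convergences considered (e.g.\ Ces\`aro or almost convergence) need not be compatible with passing to subsequences.
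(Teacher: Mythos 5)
Your proposal is correct and is essentially the paper's own argument: the paper proves Theorem \ref{vitali0} by the single line ``argue analogously as in the proof of Theorem \ref{tredieci}'', and your write-up is precisely that argument carried out, with the additional (welcome) bookkeeping verifying that the estimates relying on finite range of the $f_n$'s survive when they are merely integrable. No discrepancy to report.
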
 
\begin{proof}
The result follows by arguing analogously as in the proof of Theorem \ref{tredieci}.
\end{proof}
\begin{corollary}\label{lebesguedominated} {\rm(Lebesgue)}
Let $(f_n)_n$ be a sequence of integrable functions 
in ${\mathbf{X}}_1^G$, conver\-gent in measure to $0$ on any set $A \in {\mathcal A}$ with $\mu(A) \in{\mathbf{X}}_2$, and let there be
an integrable map $h \in {\mathbf{X}}_1^G$ with 
%\begin{eqnarray}\label{domination}
$\vert f_n(g)\vert \leq_{X_1} h(g)$ for all $ n \in \mathbb{N}$ and $g \in G$. 
%\end{eqnarray}
Then $(f_n)_n$ converges in $L^1$ to $0$.
\end{corollary}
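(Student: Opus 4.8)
The plan is to deduce the statement from the Vitali-type Theorem~\ref{vitali0}. Since $(f_n)_n$ is already assumed to converge in $\mu$-measure to $0$ on every $A\in\mathcal{A}$ with $\mu(A)\in\mathbf{X}_2$, and the $f_n$ are integrable, it suffices to prove that their integrals are $\mu$-equiabsolutely continuous in the sense of Definition~\ref{eac} (extended to integrable functions as in Remark~\ref{pf}.d)). Observe first that the domination $\vert f_n(g)\vert\leq_{X_1} h(g)$ together with $\vert f_n(g)\vert\geq_{X_1} 0$ forces $h\geq_{X_1} 0$; since $h$ is integrable, Theorem~\ref{ac} applies and gives that the integral $\int_{(\cdot)} h(g)\,d\mu(g)$ is $\mu$-absolutely continuous, i.e.\ it satisfies \ref{ac}.1) and \ref{ac}.2). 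These two facts about $h$ will produce, by a straightforward comparison, the two properties required of the $f_n$'s.

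For condition \ref{eac}.1), take a sequence $(A_n)_n$ in $\mathcal{A}$ with $\ell_2((\mu(A_n))_n)=0$. By positivity and monotonicity of the integral (Remark~\ref{pf}.a), \ref{pf}.f)) and the domination, $0\leq_{X}\int_{A_n}\vert f_n(g)\vert\,d\mu(g)\leq_{X}\int_{A_n} h(g)\,d\mu(g)$ for every $n$. By \ref{ac}.1) applied to $h$, the right-hand sequence is $\ell$-convergent to $0$, so the squeeze property (Axioms~\ref{convergenze}.c), \ref{convergenze}.e)) yields $\ell\bigl(\bigl(\int_{A_n}\vert f_n(g)\vert\,d\mu(g)\bigr)_n\bigr)=0$, which is \ref{eac}.1).

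For condition \ref{eac}.2) — the only slightly delicate step, since it involves the ``limsup''-type operator $\overline{\ell}$ — let $(B_m)_m$ be the increasing sequence in $\mathcal{A}$ with $\mu(B_m)\in\mathbf{X}_2$ furnished by \ref{ac}.2) for $h$. For all $m,n$, monotonicity of the integral and the domination give $0\leq_{X}\int_{G\setminus B_m}\vert f_n(g)\vert\,d\mu(g)\leq_{X}\int_{G\setminus B_m} h(g)\,d\mu(g)$, and the right-hand side is independent of $n$. Hence, using monotonicity of $\overline{\ell}$ (Axiom~\ref{limsuppresentation}.c)) together with the fact that $\overline{\ell}$ of a constant nonnegative sequence equals that constant (Axioms~\ref{convergenze}.c) and \ref{limsuppresentation}.d)), we get $0\leq_{X}\overline{\ell}\bigl(\bigl(\int_{G\setminus B_m}\vert f_n(g)\vert\,d\mu(g)\bigr)_n\bigr)\leq_{X}\int_{G\setminus B_m} h(g)\,d\mu(g)$ for each $m$. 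Applying $\ell$ in the variable $m$, by monotonicity (Axiom~\ref{convergenze}.b), with Axioms~\ref{convergenze}.c), \ref{convergenze}.g)) and the conclusion of \ref{ac}.2) for $h$ we obtain $\ell\bigl(\bigl(\overline{\ell}\bigl(\bigl(\int_{G\setminus B_m}\vert f_n(g)\vert\,d\mu(g)\bigr)_n\bigr)\bigr)_m\bigr)=0$, which is \ref{eac}.2). Thus the integrals of the $f_n$'s are $\mu$-equiabsolutely continuous, and Theorem~\ref{vitali0} gives that $(f_n)_n$ converges in $L^1$ to $0$. The proof is therefore essentially routine once Theorems~\ref{ac} and \ref{vitali0} are available; the only point requiring care is the bookkeeping with the abstract convergence and limit-superior axioms in the verification of \ref{eac}.2).
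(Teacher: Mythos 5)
Your proposal is correct and follows essentially the same route as the paper: it derives the $\mu$-equiabsolute continuity of the integrals of the $f_n$'s from the absolute continuity of the integral of the dominating function $h$ (Theorem \ref{ac}) via domination and the monotonicity/squeeze axioms, and then invokes the Vitali-type Theorem \ref{vitali0}. The only difference is that you spell out the verification of conditions \ref{eac}.1) and \ref{eac}.2), which the paper leaves implicit.
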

 \begin{proof} 
Since $h$ is integrable, then its integral is absolutely continuous.
From this and 
%(\ref{domination}) 
the boundedness of the $f_n$'s
we deduce that the  integrals of the $f_n$'s are 
$\mu$-equiabsolute continuous.
Therefore, Corollary \ref{lebesguedominated} is a consequence of Theorem \ref{vitali0}. 
\end{proof}
We now prove that, in the classical case, the 
 above defined integral
% defined in \ref{integrabilita} and \ref{integrabilitaa} 
and the Lebesgue's one are equivalent
(for a related overview see, e.g., \cite{HALMOS}).\\
A measure $\mu:{\mathcal A}\to
\overline{\mathbb{R}}^{+}_0$  is said to be \emph{regular} iff $\mu(C) < + \infty$
for each compact set $C \subset G$, and  for every $\varepsilon \in \mathbb{R}^+ $
and $E \in {\mathcal A}$ there are a compact set $C \subset E$ and an open  set $U \supset E$ with $\mu (U \setminus C) $
 $\leq$ $\varepsilon$. 
Note that every regular finitely additive measure is  
$\sigma$-additive.

\begin{proposition}\label{lebesgue}
Let $G=(G,d)$ be a metric space,  ${\mathbf{X}}={\mathbf{X}}_1 ={\mathbf{X}}_2= {\mathbb{R}}$  be endowed with 
the usual convergence, ${\mathcal A}$ be the $\sigma$-algebra of  all measurable subsets of $G$, 
and $\mu:{\mathcal A}\to \mathbb{R}$ be a  $\sigma$-finite regular measure.
Then, a function  $f \in {\mathbb{R}}^G$ is integrable 
if and only if it is Lebesgue integrable, and in this case we have
\begin{eqnarray*}\label{equivalence}
\int_A 
f(g) \, d\mu(g)=(L) \int_A f(g) \, d\mu(g) \quad \text{ for   all   } \, A \in {\mathcal A},
\end{eqnarray*}
where $\displaystyle{(L) \int_{(\cdot)}}$ denotes the Lebesgue integral.
\end{proposition}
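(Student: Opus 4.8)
The plan is to prove both implications, in each direction first reducing to the case $f\geq 0$: Lebesgue integrability of $f$ is equivalent to that of $f^{+}$ and $f^{-}$, abstract integrability was \emph{defined} through $f^{+}$ and $f^{-}$ in Definition~\ref{integrabilitaa}, and the two integral formulas split additively, so the general statement follows at once from the nonnegative one.

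\emph{Lebesgue $\Rightarrow$ abstract.} Let $f\geq 0$ be Lebesgue integrable. Using $\sigma$-finiteness, fix an increasing exhaustion $(B_m)_m\subset\mathcal{A}$ with $\mu(B_m)<+\infty$, and let $(f_n)_n$ be the usual dyadic truncations of $f$ restricted to $B_n$; these lie in $\mathscr{S}$, satisfy $0\leq f_n\leq f$, and $f_n\to f$ pointwise $\mu$-a.e. (recall $f<+\infty$ a.e.). Dominated convergence gives $\|f_n-f\|_{L^1(\mu)}\to 0$. On any $A$ with $\mu(A)<+\infty$, Egorov's theorem (valid since $\mu$ restricted to $A$ is finite and, being regular, $\sigma$-additive) produces a sequence $(A_n)_n$ realizing convergence in $\mu$-measure in the sense of Definition~\ref{measureuniform}. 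Since $0\leq f_n\leq f\in L^1(\mu)$, condition \ref{eac}.1) is just absolute continuity of the Lebesgue integral of $f$, and condition \ref{eac}.2) holds with the same $(B_m)_m$ because $\int_{G\setminus B_m}|f_n|\,d\mu\leq\int_{G\setminus B_m}f\,d\mu\to 0$; hence $(f_n)_n$ is defining for $f$. Taking $l(A)=(L)\int_A f\,d\mu$, we have $\bigl|\int_A f_n\,d\mu-l(A)\bigr|\leq\|f_n-f\|_{L^1(\mu)}$ uniformly in $A\in\mathcal{A}$, which is precisely (\ref{l}); so $f$ is abstractly integrable with $\int_A f\,d\mu=l(A)=(L)\int_A f\,d\mu$.

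\emph{Abstract $\Rightarrow$ Lebesgue.} Let $f\geq 0$ be abstractly integrable, with defining sequence $(f_n)_n\subset\mathscr{S}$ and, by Proposition~\ref{welldefined}, $l(A)=\lim_n\int_A f_n\,d\mu=\int_A f\,d\mu$. On each $B_m$ the convergence of Definition~\ref{measureuniform} implies ordinary convergence in $\mu$-measure (for large $n$ the set where $|f_n-f|>\varepsilon$ is contained in a set of measure $\leq\mu(B_m\cap A_n)\to 0$); extracting a.e.-convergent subsequences and letting $B_m\uparrow G$ shows $f$ is $\mu$-measurable. Next, $l$ is a finite nonnegative set function (nonnegativity and monotonicity from Remark~\ref{pf}.f), finiteness since $l(G)\in\mathbb{R}$), it satisfies $l\ll\mu$ and $l(A_n)\to 0$ whenever $\mu(A_n)\to 0$ by Theorem~\ref{ac}.1), and it is $\sigma$-additive: for disjoint $(C_k)_k$ with union $C$ and $D_N=\bigcup_{k>N}C_k\downarrow\emptyset$, finite additivity gives $l(C)=\sum_{k\leq N}l(C_k)+l(D_N)$, while $l(D_N)\leq l(D_N\cap B_m)+\int_{G\setminus B_m}f\,d\mu$; choosing $m$ so the last term is small by Theorem~\ref{ac}.2) and then letting $N\to\infty$ (using $\mu(D_N\cap B_m)\to 0$ by continuity from above of the finite $\sigma$-additive measure $\mu|_{B_m}$, together with Theorem~\ref{ac}.1)) forces $l(D_N)\to 0$. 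By Radon--Nikodym for the $\sigma$-finite $\mu$, $l(A)=(L)\int_A h\,d\mu$ for some $h\in L^1(\mu)$, $h\geq 0$.

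It remains to identify $h$ with $f$ a.e. Since $\int_A f_n\,d\mu=(L)\int_A f_n\,d\mu$ and $\int_A f_n\,d\mu\to l(A)=(L)\int_A h\,d\mu$ uniformly in $A$ by (\ref{l}), the elementary bound $\sup_{A\in\mathcal{A}}\bigl|(L)\int_A g\,d\mu\bigr|\geq\tfrac12\int_G|g|\,d\mu$ yields $\|f_n-h\|_{L^1(\mu)}\to 0$, hence $f_n\to h$ in $\mu$-measure on each $B_m$; comparing with $f_n\to f$ in $\mu$-measure on $B_m$ and using uniqueness of the limit in measure gives $f=h$ $\mu$-a.e. on $B_m$, and $B_m\uparrow G$ gives $f=h$ $\mu$-a.e. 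Thus $f$ is Lebesgue integrable and $\int_A f\,d\mu=l(A)=(L)\int_A f\,d\mu$. The genuinely delicate step is the $\sigma$-additivity of $l$ in the second implication: when $\mu$ is infinite this fails for generic finitely additive limits, and it is rescued exactly by the ``tightness'' clause \ref{eac}.2) of $\mu$-equiabsolute continuity (via Theorem~\ref{ac}.2)), without which Radon--Nikodym would be unavailable; the first implication is by contrast routine.
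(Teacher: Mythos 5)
Your argument is correct, and the forward implication is essentially the paper's (simple approximants from below, Egorov on finite-measure sets for convergence in the sense of Definition~\ref{measureuniform}, absolute continuity and tightness of the Lebesgue integral for \ref{eac}.1)--2), and the uniform $L^1$ bound for (\ref{l})); you merely spell out details the paper leaves implicit. The converse is where you genuinely diverge. The paper monotonizes the defining sequence (``without loss of generality $0\leq f_n\leq f_{n+1}\leq f$''), observes that $(f-f_n)_n$ is then defining for $0$, applies the Vitali theorem (Theorem~\ref{vitali0}) to conclude that $\bigl(\int_G f_n\,d\mu\bigr)_n$ is bounded, and finishes with the classical monotone convergence theorem. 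You instead keep the defining sequence arbitrary, establish measurability of $f$ by subsequence extraction, prove that the limit set function $l$ is a finite, $\sigma$-additive measure absolutely continuous with respect to $\mu$ (using exactly the tightness clause \ref{eac}.2) via Theorem~\ref{ac} to control the tails $l(D_N)$), apply Radon--Nikodym, and identify the density with $f$ through the elementary bound $\sup_A\bigl\vert(L)\int_A g\,d\mu\bigr\vert\geq\tfrac12\Vert g\Vert_{L^1}$ together with uniqueness of limits in measure. Your route is longer and imports Radon--Nikodym, but it buys something real: it sidesteps the paper's ``without loss of generality'' monotonization, which is not justified there (an arbitrary defining sequence need not be replaceable by an increasing one dominated by $f$ without further argument), and it makes explicit where $\sigma$-additivity of $l$ comes from when $\mu$ is infinite. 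The paper's route, once that reduction is granted, is shorter and avoids measure-theoretic machinery beyond monotone convergence. Both are acceptable; yours is the more self-contained of the two.
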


\begin{proof}
Let $f:G \to{\mathbb{R}}$ be Lebesgue integrable on $G$.
 Without loss of generality, we can assume $f \geq 0$. 
Then there are an increasing sequence  $(f_n)_n$ in ${\mathbb{R}}^G$, $\mu$-convergent  to $f$
on $G$, and hence $\mu$-convergent  to $f$ on every set $A \in {\mathcal A}$ with $\mu(A) < + \infty$.
Condition \ref{eac}.1) on the 
$\mu$-equiabsolute continuity  of the integrals of the $f_n$'s 
follows from the absolute continuity of the  Lebesgue integral, while condition \ref{eac}.2) is a consequence 
of the $\sigma$-additivity of regular measures and of
%$\sigma$-additivity of 
the Lebesgue integral. Thus, $(f_n)_n$ is a defining sequence for $f$.
From this and Proposition \ref{welldefined} it follows that every Lebesgue integrable function is integrable according to  
Definitions \ref{integrabilita} and \ref{integrabilitaa}.
\\
Now we prove the converse implication. Let $f \in {\mathbb{R}}^G$ be  integrable 
%according to \ref{integrabilita} and \ref{integrabilitaa}, 
and let
$(f_n)_n$ be an associated defining sequence of $f$. 
Since $f_n \in \mathscr{S}$ for each $n \in \mathbb{N}$, then the $f_n$'s are Lebesgue integrable.  
Without loss of generality, we may suppose that 
$0 \leq f_n(g) \leq f_{n+1}(g) \leq f(g)$ for every $g \in G$.
By the absolute continuity  of our integral, $(f-f_n)_n$ is a defining sequence for the 
identically null function. By Theorem \ref{vitali0},   $(f-f_n)_n$ converges in $L^1$ to $0$, and hence the sequence 
$\displaystyle{\Bigl(\int_G f_n(g) \, d\mu(g)\Bigr)_n}$ 
is bounded. Thus, by the classical monotone convergence theorem (see, e.g., 
\cite[Theorem 27.B]{HALMOS}), we obtain that $f$ is Lebesgue  integrable.
\\
The equality of the integrals
%relation (\ref{equivalence}) 
follows from the fact that  the values of both integrals 
do not depend on the chosen respective defining sequences.
\end{proof}

\subsection{Some properties of the integral}

We now give some properties of the integral 
and  present the concepts of uniform continuity and convexity in the vector lattice context. We give
some versions of the Jensen inequality, which will be useful to prove some modular convergence
theorems. The proofs of the results of this subsection  are given in the Appendix.
Let  $(\mathbf{X}_1^{\prime}, 
\mathbf{X}_1^{\prime\prime}, 
\mathbf{X}_1)$, $(\mathbf{X}_1,\mathbf{X}_2,  \mathbf{X})$ 
be two product triples.

\begin{proposition}\label{intproduct}
Let
$h:G \to \mathbf{X}_1^{\prime}$, 
$q:G \to \mathbf{X}_1^{\prime\prime} $ be bounded and
integrable on $G$ according to Definitions \rm \ref{integrabilita} and \ref{integrabilitaa}. \em 
Then, the function $h \cdot q:G \to 
\mathbf{X}_1$  is bounded and integrable on $G$ too.
\end{proposition}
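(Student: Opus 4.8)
The plan is to produce a defining sequence for $h\cdot q$ out of defining sequences for $h$ and $q$ and then to invoke Proposition \ref{welldefined}; this parallels \cite[Proposition 3.11]{BC2009}, the genuinely new point being the control of the tail integrals forced by the mere $\sigma$-finiteness (rather than finiteness) of $\mu$. First I would reduce to $h\geq_{X_1'}0$, $q\geq_{X_1''}0$: since $h\cdot q=h^{+}\cdot q^{+}-h^{+}\cdot q^{-}-h^{-}\cdot q^{+}+h^{-}\cdot q^{-}$, each summand a product of two non-negative bounded integrable functions, and integrability is stable under finite linear combinations (Remark \ref{pf}.e)), this suffices. Fix $w'$ in the positive cone of $\mathbf{X}_1'$ and $w''$ in that of $\mathbf{X}_1''$ with $|h|\leq_{X_1'}w'$, $|q|\leq_{X_1''}w''$, pick defining sequences $(h_n)_n$, $(q_n)_n\subset\mathscr{S}$ for $h$, $q$, and replace them by the truncations $(h_n\vee 0)\wedge w'$, $(q_n\vee 0)\wedge w''$; these remain defining sequences for $h$, $q$ (routine, using $|(h_n\vee 0)\wedge w'-h|\leq_{X_1'}3\,|h_n-h|$, $|(h_n\vee 0)\wedge w'|\leq_{X_1'}|h_n|$ and Axioms \ref{convergenze}), so we may assume $0\leq_{X_1'}h_n\leq_{X_1'}w'$, $0\leq_{X_1''}q_n\leq_{X_1''}w''$ for all $n$. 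Then $h_n\cdot q_n\in\mathscr{S}$, and $|h\cdot q|\leq_{X_1}|h|\cdot|q|\leq_{X_1}w'\cdot w''$ already gives the asserted boundedness of $h\cdot q$.

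Next I would check that $(h_n\cdot q_n)_n$ is defining for $h\cdot q$. For convergence in $\mu$-measure on a set $A$ with $\mu(A)\in\mathbf{X}_2$, I use the bound $|x_1\cdot x_2|\leq_{X_1}|x_1|\cdot|x_2|$ (a consequence of \ref{ass}.1)--\ref{ass}.4) and the definition of $|\cdot|$) together with the splitting $h_n\cdot q_n-h\cdot q=h_n\cdot(q_n-q)+(h_n-h)\cdot q$, which yields $|h_n\cdot q_n-h\cdot q|\leq_{X_1}w'\cdot|q_n-q|+|h_n-h|\cdot w''$; taking the supremum over $g\in A\setminus(A_n\cup D_n)$ — where $(A_n)_n$, $(D_n)_n$ witness the convergence in measure of $(h_n)_n$, $(q_n)_n$ — applying $\ell_1$ together with the compatibility Axioms \ref{ass}.7)--\ref{ass}.8) for the triple $(\mathbf{X}_1',\mathbf{X}_1'',\mathbf{X}_1)$, and noting $\ell_2((\mu(A\cap(A_n\cup D_n)))_n)=0$ by \ref{convergenze}.a),e), gives the claim. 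For $\mu$-equiabsolute continuity, condition \ref{eac}.1) is immediate from $|h_n\cdot q_n|\leq_{X_1}w'\cdot w''$: if $\ell_2((\mu(A_n))_n)=0$ then $\int_{A_n}|h_n\cdot q_n|\,d\mu\leq_X(w'\cdot w'')\cdot\mu(A_n)$, which is $\ell$-null by Axiom \ref{ass}.8) for $(\mathbf{X}_1,\mathbf{X}_2,\mathbf{X})$.

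The remaining condition \ref{eac}.2) is where $\sigma$-finiteness genuinely bites, since the constant bound $w'\cdot w''$ is useless when $\mu(G\setminus B_m)=+\infty$. Here I would bound $|h_n\cdot q_n|\leq_{X_1}w'\cdot|q_n|$, factor out the fixed vector $w'$ (the $\mathbf{X}_1'$-valued analogue of Remark \ref{pf}.b), legitimate via \ref{ass}.7) and \ref{ass}.9)), obtaining $\int_{G\setminus B_m}|h_n\cdot q_n|\,d\mu\leq_X w'\cdot\int_{G\setminus B_m}|q_n|\,d\mu$, and take $(B_m)_m$ to be the increasing exhaustion witnessing \ref{eac}.2) for $(q_n)_n$; passing first to $\overline{\ell}$ in $n$ (subadditivity and monotonicity of $\overline{\ell}$, plus \ref{ass}.9)--\ref{ass}.10)) and then to $\ell$ in $m$ transfers \ref{eac}.2) to $(h_n\cdot q_n)_n$. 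Thus $(h_n\cdot q_n)_n$ is defining for $h\cdot q$, and integrability follows once a map $l:{\mathcal A}\to\mathbf{X}$ with property (\ref{l}) is exhibited. The candidate $l(A):=\ell((\int_A h_n\cdot q_n\,d\mu)_n)$ works: the splitting again bounds $\bigvee_{A\in{\mathcal A}}|\int_A h_nq_n\,d\mu-\int_A h_mq_m\,d\mu|$ by $\int_G|h_nq_n-h_mq_m|\,d\mu\leq_X w'\cdot\int_G|q_n-q_m|\,d\mu+(\int_G|h_n-h_m|\,d\mu)\cdot w''$, whose right-hand side $\ell$-converges to $0$ as $n,m\to\infty$ because $(h_n)_n$ and $(q_n)_n$ are $L^1$-convergent (Proposition \ref{Mconv}, Theorem \ref{ac}, Theorem \ref{vitali0}); a standard uniform-Cauchy argument based on Axioms \ref{convergenze} then shows $(\int_A h_nq_n\,d\mu)_n\in{\mathcal S}$ for every $A$ and establishes (\ref{l}). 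By Proposition \ref{welldefined}, $l$ is the integral of $h\cdot q$, so $h\cdot q$ is integrable.

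The step I expect to be the main obstacle is condition \ref{eac}.2) for the product sequence — precisely the point that is vacuous in the finite-measure setting of \cite[Proposition 3.11]{BC2009} and \cite[Theorem 3.5]{BCSVITALI} — and, entangled with it, the bookkeeping needed to route every factorization ``$\int(v\cdot f)\,d\mu=v\cdot\int f\,d\mu$'' and every implication ``the product of an $\ell$-null sequence with a fixed vector is $\ell$-null'' through the correct product triple among $(\mathbf{X}_1',\mathbf{X}_1'',\mathbf{X}_1)$, $(\mathbf{X}_1,\mathbf{X}_2,\mathbf{X})$ and the compatibility Axioms \ref{ass}.7)--\ref{ass}.10): this is notationally heavy but conceptually routine.
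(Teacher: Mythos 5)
Most of your argument runs parallel to the paper's own proof: the same splitting $h_n\cdot q_n-h\cdot q=h_n\cdot(q_n-q)+(h_n-h)\cdot q_n$ dominated by $u\cdot\vert q_n-q\vert+\vert h_n-h\vert\cdot v$ (the paper takes $u=\bigvee_{g}\vert h(g)\vert$ and $v=\bigvee_{g,n}\vert q_n(g)\vert$, which is your truncation in disguise), the same treatment of convergence in $\mu$-measure via the compatibility conditions \ref{ass}.7)--\ref{ass}.8), and a pointwise domination ($\vert h_n q_n\vert\leq_{X_1} \vert h_n\vert\cdot v$ in the paper, the symmetric $w'\cdot\vert q_n\vert$ in your write-up) to transfer $\mu$-equiabsolute continuity; these parts are sound.

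The gap is in the final step, the construction of the map $l$ satisfying (\ref{l}). You define $l(A):=\ell\bigl(\bigl(\int_A h_nq_n\,d\mu\bigr)_n\bigr)$ and justify membership of this sequence in $\mathcal{S}$ by ``a standard uniform-Cauchy argument based on Axioms \ref{convergenze}.'' But Axioms \ref{convergenze} contain no completeness principle: $\mathcal{S}$ is an arbitrary linear subspace of $\mathcal{T}$ and $\ell$ an arbitrary linear monotone functional on it, so a sequence whose mutual differences are controlled by $\mathcal{S}$-null sequences need not itself belong to $\mathcal{S}$ (almost and Ces\`{a}ro convergence are among the intended instances, and no ``Cauchy implies convergent'' statement is available for an abstract $(\mathcal{S},\ell)$). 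Since exhibiting $l$ is precisely the assertion that this sequence has a limit, the step as written is circular. The paper circumvents this by taking the defining sequences (without loss of generality) positive and \emph{increasing}: then $\bigl(\int_A h_nq_n\,d\mu\bigr)_n$ is increasing and order bounded, Dedekind completeness supplies the candidate $l(A)=\bigvee_{m}\int_A h_mq_m\,d\mu$, and the estimate $0\leq_{X}\bigvee_{A\in\mathcal{A}}\bigl(l(A)-\int_A h_nq_n\,d\mu\bigr)\leq_{X} u\cdot\int_G(q-q_n)\,d\mu+\bigl(\int_G(h-h_n)\,d\mu\bigr)\cdot v$, combined with the $L^1$-convergence of $(h-h_n)_n$ and $(q-q_n)_n$ to $0$ (Theorem \ref{vitali0}) and the sandwich Axiom \ref{convergenze}.e), yields (\ref{l}). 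Your truncation $(h_n\vee 0)\wedge w'$ makes the defining sequences positive and bounded but not increasing, so you would still need to add this monotonization (or otherwise sandwich the sequence between two explicit members of $\mathcal{S}$ with a common limit) before the limit defining $l(A)$ is known to exist.
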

\begin{corollary}\label{intproduct2}
Let 
$h:G \to \mathbf{X}_1^{\prime}$, and
$q:G \to \mathbf{X}_1^{\prime\prime} $.
Assume that
$h$ is integrable on $G$ and 
bounded on every set  $B \in {\mathcal A}$ with 
$\mu(B) \in {\mathbf{X}}_2$.
Moreover, suppose that $q$ is bounded and integrable on $G$, and   that  there is 
%a set 
$\overline{B} \in {\mathcal A}$ with  
 $\mu(\overline{B}) \in {\mathbf{X}}_2$   and 
such that $q$ vanishes   on  $G \setminus \overline{B}$. 
Then, the function  $h \cdot q:G \to 
 \mathbf{X}_1$ is bounded and integrable on $G$.
\end{corollary}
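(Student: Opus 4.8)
The plan is to reduce the statement to Proposition~\ref{intproduct} by truncating $h$ to the set $\overline{B}$. I would define $\widetilde{h}\colon G \to \mathbf{X}_1^{\prime}$ by $\widetilde{h}=h\cdot\chi_{\overline{B}}$, i.e.\ $\widetilde{h}(g)=h(g)$ for $g\in\overline{B}$ and $\widetilde{h}(g)=0$ otherwise. Since $\mu(\overline{B})\in\mathbf{X}_2$, the hypothesis on $h$ guarantees that $h$ is bounded on $\overline{B}$, say $\vert h(g)\vert\leq w$ for all $g\in\overline{B}$ and some $w\in(\mathbf{X}_1^{\prime})^{+}$; as $\widetilde{h}$ vanishes off $\overline{B}$, we get $\vert\widetilde{h}(g)\vert\leq w$ for every $g\in G$, so $\widetilde{h}$ is bounded on $G$. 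If I can show that $\widetilde{h}$ is integrable on $G$, then Proposition~\ref{intproduct} applied to the pair $(\widetilde{h},q)$ yields that $\widetilde{h}\cdot q\colon G\to\mathbf{X}_1$ is bounded and integrable on $G$; and since $\widetilde{h}\cdot q=h\cdot q$ pointwise (on $\overline{B}$ because $\widetilde{h}=h$ there, and on $G\setminus\overline{B}$ because $q=0$ there while $x\cdot 0=0$ for every $x$ by Assumption~\ref{ass}.2)), the Corollary follows.

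So the crux is the integrability of $\widetilde{h}$, for which I would establish the following localization fact: if $f\in\mathbf{X}_1^G$ is integrable and $C\in\mathcal{A}$ with $\mu(C)\in\mathbf{X}_2$, then $f\cdot\chi_C$ is integrable, with $\int_A f\chi_C\,d\mu=\int_{A\cap C}f\,d\mu$ for all $A\in\mathcal{A}$. By Definition~\ref{integrabilitaa} it suffices to treat $f\geq_{X_1}0$; I pick a defining sequence $(f_n)_n$ for $f$ with associated map $l$ (Definitions~\ref{defining}, \ref{integrabilita}). Then each $f_n\chi_C$ is again simple (if $f_n=\sum_j c_j\chi_{A_j}$ then $f_n\chi_C=\sum_j c_j\chi_{A_j\cap C}$ and $\mu(A_j\cap C)\leq_{X_2}\mu(C)\in\mathbf{X}_2^{+}$), the elementary bound $\vert f_n\chi_C-f\chi_C\vert=\vert f_n-f\vert\,\chi_C\leq_{X_1}\vert f_n-f\vert$ together with Axioms~\ref{convergenze} shows that $(f_n\chi_C)_n$ converges in $\mu$-measure to $f\chi_C$ on every set of finite measure (keeping the same exceptional sets $A_n$ as for $(f_n)_n$), and the $\mu$-equiabsolute continuity of the integrals of the $f_n\chi_C$'s is inherited from that of the $f_n$'s: for~\ref{eac}.1) one uses $0\leq_{X_2}\mu(A_n\cap C)\leq_{X_2}\mu(A_n)$ and $\int_{A_n}\vert f_n\chi_C\vert\,d\mu=\int_{A_n\cap C}\vert f_n\vert\,d\mu$, and for~\ref{eac}.2) one simply takes $B_m=C$ for all $m$, so that $\int_{G\setminus C}\vert f_n\chi_C\vert\,d\mu=0$. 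Finally, since $\{A\cap C:A\in\mathcal{A}\}\subseteq\mathcal{A}$ and $\int_A f_n\chi_C\,d\mu=\int_{A\cap C}f_n\,d\mu$, we have $\bigvee_{A\in\mathcal{A}}\bigl\vert\int_A f_n\chi_C\,d\mu-l(A\cap C)\bigr\vert\leq_X\bigvee_{A^{\prime}\in\mathcal{A}}\bigl\vert\int_{A^{\prime}}f_n\,d\mu-l(A^{\prime})\bigr\vert$, whence, by~(\ref{l}) and Axioms~\ref{convergenze}.b), \ref{convergenze}.e), the sequence $(f_n\chi_C)_n$ is defining for $f\chi_C$ with limit map $A\mapsto l(A\cap C)$; thus $f\chi_C$ is integrable. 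Applying this to $f=h^{+}$, $f=h^{-}$ (both integrable because $h$ is) with $C=\overline{B}$ gives the integrability of $\widetilde{h}=h\chi_{\overline{B}}$, which completes the reduction.

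The main obstacle is precisely this localization lemma: although each verification---simplicity, convergence in $\mu$-measure, the two $\mu$-equiabsolute continuity conditions, and the existence together with the independence of the limit map---is a routine consequence of the corresponding property of the defining sequence $(f_n)_n$ and of Axioms~\ref{convergenze}--\ref{limsuppresentation}, one must be slightly careful in identifying the correct limit map, namely $A\mapsto l(A\cap C)$, which rests on the stability of $\mathcal{A}$ under intersection with $C$; everything else is bookkeeping with the product structure of Assumption~\ref{ass} and with the monotone, subadditive behaviour of $\ell$ and $\overline{\ell}$.
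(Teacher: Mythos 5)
Your proposal is correct and follows essentially the same route as the paper: truncate $h$ to $\overline{B}$, check that $h\chi_{\overline{B}}$ is bounded and integrable, apply Proposition~\ref{intproduct} to the pair $(h\chi_{\overline{B}},q)$, and observe that $h\cdot q=(h\chi_{\overline{B}})\cdot q$ since $q$ vanishes off $\overline{B}$. The only difference is that you spell out the localization lemma (integrability of $f\chi_C$ for $C$ of finite measure, via the defining sequence $(f_n\chi_C)_n$ and the limit map $A\mapsto l(A\cap C)$), which the paper dismisses as ``not difficult to check''; your verification of that step is sound.
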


Now we give the concept of uniform continuity  for vector lattice-valued functions (see, e.g., \cite[Definition 3.9]{BCSVITALI}).
Let $G=(G,d)$ be a metric space. 
We say that 
\begin{itemize}
\item $f:G \to \mathbf{X}_1$ is $\emph{uniformly continuous on }$  $G$ iff there exists an element $u \in {\mathbf{X}}_1^+$ such that for
each 
$\varepsilon \in \mathbb{R}^+ $ there is $\delta \in \mathbb{R}^+ $ with $\vert f(g_1) -f(g_2) \vert \leq_{X_1} \varepsilon \, u$ whenever $g_1$, $g_2\in G$,
$d(g_1,g_2) \leq \delta$.
\item 
A function $\psi:\mathbf{X}_1\to \mathbf{X}_1$ is said to be  \emph{uniformly continuous on }  $\mathbf{X}_1$ iff
for every $u \in {\mathbf{X}}_1^+$ and $ \varepsilon \in  \mathbb{R}^+ $
there exist $w \in {\mathbf{X}}_1^+$ and $
 \delta \in  \mathbb{R}^+ $ with
$\vert \psi(x_1)-\psi(x_2)\vert \leq_{X_1} \, \varepsilon \, w$ whenever $\vert x_1-x_2 \vert \leq_{X_1} \delta \, u$.
\end{itemize}
Observe that, when $G={\mathbf{X}}_1=\mathbb{R}$ endowed with the usual topology, the two presented  concepts of uniform continuity coincide with the classical one.
Indeed, it is enough to consider $u=w=\mathbf{1}$,  the constant function which associates the real number $1$ to every $g \in G$.\\

Now we present some properties on uniformly continuous functions defined in a metric space $(G,d)$.

\begin{proposition}\label{compuc}
Let $f:G \to {\mathbf{X}}_1$, $\psi:\mathbf{X}_1\to \mathbf{X}_1$ be uniformly continuous 
on $G$ and $\mathbf{X}_1$, respectively. Then the composite function $\psi \circ f: G \to {\mathbf{X}}_1$ is uniformly continuous on $G$.
\end{proposition}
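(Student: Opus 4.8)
The plan is to prove Proposition~\ref{compuc} directly from the two definitions of uniform continuity, chaining the quantifiers appropriately. Let $f:G\to\mathbf{X}_1$ be uniformly continuous on $G$ and $\psi:\mathbf{X}_1\to\mathbf{X}_1$ be uniformly continuous on $\mathbf{X}_1$. By definition of uniform continuity of $f$, there is a fixed $u\in\mathbf{X}_1^+$ such that for every $\eta\in\mathbb{R}^+$ there is $\delta\in\mathbb{R}^+$ with $\vert f(g_1)-f(g_2)\vert\leq_{X_1}\eta\,u$ whenever $d(g_1,g_2)\leq\delta$. Now feed this $u$ into the definition of uniform continuity of $\psi$: for every $\varepsilon\in\mathbb{R}^+$ there exist $w\in\mathbf{X}_1^+$ and $\eta\in\mathbb{R}^+$ such that $\vert\psi(x_1)-\psi(x_2)\vert\leq_{X_1}\varepsilon\,w$ whenever $\vert x_1-x_2\vert\leq_{X_1}\eta\,u$. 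The element $w$ produced here (which depends only on $u$ and $\varepsilon$, hence ultimately only on $f$ and $\varepsilon$) will be the witness in $\mathbf{X}_1^+$ for the uniform continuity of $\psi\circ f$.

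The main steps in order are: first fix the element $u\in\mathbf{X}_1^+$ coming from the uniform continuity of $f$; this $u$ is the same for all $\varepsilon$. Second, given an arbitrary $\varepsilon\in\mathbb{R}^+$, apply the uniform continuity of $\psi$ to this $u$ and this $\varepsilon$ to obtain $w\in\mathbf{X}_1^+$ and $\eta\in\mathbb{R}^+$. Third, apply the uniform continuity of $f$ with the parameter $\eta$ in place of its ``$\varepsilon$'' to obtain $\delta\in\mathbb{R}^+$ such that $d(g_1,g_2)\leq\delta$ implies $\vert f(g_1)-f(g_2)\vert\leq_{X_1}\eta\,u$. Fourth, conclude: if $d(g_1,g_2)\leq\delta$, then setting $x_1=f(g_1)$, $x_2=f(g_2)$ we have $\vert x_1-x_2\vert\leq_{X_1}\eta\,u$, whence $\vert(\psi\circ f)(g_1)-(\psi\circ f)(g_2)\vert=\vert\psi(f(g_1))-\psi(f(g_2))\vert\leq_{X_1}\varepsilon\,w$. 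Since $w$ was chosen independently of $g_1,g_2$ and $\delta$ exists for every $\varepsilon$, this is exactly the definition of uniform continuity of $\psi\circ f$ on $G$, with the fixed witness being $w$ — but note $w$ may a priori depend on $\varepsilon$, so one should phrase the conclusion carefully.

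The one genuinely delicate point — the ``main obstacle'', though it is minor — is the order of quantifiers in the definition of uniform continuity of a function $f:G\to\mathbf{X}_1$: the element $u$ must be chosen \emph{once and for all}, before $\varepsilon$, whereas for $\psi:\mathbf{X}_1\to\mathbf{X}_1$ the output element $w$ is allowed to depend on both the input element $u$ and on $\varepsilon$. For the composite $\psi\circ f$ we need a single $u'\in\mathbf{X}_1^+$ working for all $\varepsilon$. This is resolved by observing that in the composition, the element we plug into $\psi$'s definition is always the \emph{same} $u$ (the one from $f$), so the resulting $w$ depends only on $\varepsilon$; we may then take $u':=w$ if we want a uniform witness, but since the definition for $\psi\circ f$ asks only that \emph{some} $u'\in\mathbf{X}_1^+$ exists with the stated property and the natural reading permits $u'$ to be the fixed $w$ associated to each $\varepsilon$ only if $w$ is $\varepsilon$-independent. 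In fact, inspecting the definition of uniform continuity of $\psi$, one can first pick $w_0$ for $\varepsilon=1$ and then check that $w:=w_0$ serves for all $\varepsilon$ by rescaling (replacing $\delta$), or more simply observe that the definitions are stated so that the witness element is exactly what is needed; I would make this explicit to avoid any gap, and this bookkeeping about which element is fixed versus which depends on $\varepsilon$ is the only thing requiring care. Everything else is a routine unwinding of definitions using transitivity of the chain $d(g_1,g_2)\leq\delta\Rightarrow\vert f(g_1)-f(g_2)\vert\leq_{X_1}\eta\,u\Rightarrow\vert\psi(f(g_1))-\psi(f(g_2))\vert\leq_{X_1}\varepsilon\,w$.
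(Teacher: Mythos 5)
Your argument is correct and is essentially identical to the paper's own proof: fix the witness $u$ from the uniform continuity of $f$, feed $u$ and $\varepsilon$ into the uniform continuity of $\psi$ to get $w$ and a threshold $\sigma(\varepsilon)$, then use the uniform continuity of $f$ at level $\sigma(\varepsilon)$ to get $\delta$, and chain the inequalities. The quantifier subtlety you flag (that $w$ a priori depends on $\varepsilon$, while the definition of uniform continuity on $G$ asks for a single witness fixed before $\varepsilon$) is real, but the paper's proof leaves it in exactly the same state as yours, so your proposal matches the paper's level of rigor as well as its method.
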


The next result extends the first part of  \cite[Proposition 3.12.4]{BCSVITALI} to  this setting.
%=========================

%=========================
\begin{proposition}\label{boundedness}
Let $\mu$ be such that $\mu(K) \in X_2$\,  
for every compact subset $K \in {\mathcal A}$. Let 
$C \in {\mathcal A}$ be a compact set, and $f: G \to \mathbf{X}_1$ 
be a uniformly continuous function on $G$, such that 
$f(g)=0$ whenever $g \in G \setminus C$. Then, $f$ is bounded and integrable on $G$. 
\end{proposition}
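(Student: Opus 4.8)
The plan is to produce an explicit defining sequence for $f$ built from finitely many pieces of $f$, using the compactness of $C$ together with the uniform continuity of $f$. First I would fix the element $u \in {\mathbf{X}}_1^+$ associated with the uniform continuity of $f$, and for each $k \in \mathbb{N}$ use the corresponding $\delta_k \in \mathbb{R}^+$ so that $\vert f(g_1)-f(g_2)\vert \leq_{X_1} \frac1k\,u$ whenever $d(g_1,g_2)\leq \delta_k$. Since $C$ is compact, cover it by finitely many balls $B(g_{k,1},\delta_k),\dots,B(g_{k,r_k},\delta_k)$ with centres in $C$, and disjointify: set $A_{k,1}=C\cap B(g_{k,1},\delta_k)$ and $A_{k,i}=C\cap B(g_{k,i},\delta_k)\setminus\bigcup_{j<i}B(g_{k,j},\delta_k)$. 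Because $\mu(C)\in{\mathbf{X}}_2$ and $C\in{\mathcal A}$, each $A_{k,i}\in{\mathcal A}$ has $\mu(A_{k,i})\in{\mathbf{X}}_2^+$ (monotonicity of $\mu$ and solidity). Define the simple function $f_k=\sum_{i=1}^{r_k} f(g_{k,i})\,\chi_{A_{k,i}}\in\mathscr{S}$; note $f_k$ vanishes off $C$.

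The second step is to check $(f_k)_k$ is defining for $f$. For uniform convergence: for $g\in C$, $g$ lies in some $A_{k,i}$, so $d(g,g_{k,i})\leq\delta_k$ and hence $\vert f_k(g)-f(g)\vert=\vert f(g_{k,i})-f(g)\vert\leq_{X_1}\frac1k\,u$; since $f_k=f=0$ off $C$, we get $\bigvee_{g\in G}\vert f_k(g)-f(g)\vert\leq_{X_1}\frac1k\,u$ for all $k$, and by Axiom \ref{convergenze}.f) this $(r)$-type bound forces $\ell_1\bigl(\bigl(\bigvee_{g\in G}\vert f_k(g)-f(g)\vert\bigr)_k\bigr)=0$ via Axioms \ref{convergenze}.b),\ref{convergenze}.e). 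Thus $(f_k)_k$ converges uniformly, hence in $\mu$-measure, to $f$ on every $A\in{\mathcal A}$ with $\mu(A)\in{\mathbf{X}}_2$. For $\mu$-equiabsolute continuity: taking $B_m=C$ for every $m$ kills condition \ref{eac}.2) since $\int_{G\setminus C}\vert f_k\vert\,d\mu=0$; for \ref{eac}.1), if $\ell_2((\mu(A_n))_n)=0$ then $0\leq_X\int_{A_n}\vert f_k\vert\,d\mu\leq_X\bigl(\bigvee_{g\in G}\vert f_k(g)\vert\bigr)\cdot\mu(A_n\cap C)$, and since $\bigvee_{g\in G}\vert f_k(g)\vert\leq_{X_1}\bigvee_{g\in C}\vert f(g)\vert+u=:M$ is a single bound independent of $k$, Axiom \ref{ass}.8) (applied to $\ell_2((\mu(A_n\cap C))_n)=0$, which follows from \eqref{complementunion}-style monotonicity) gives $\ell\bigl(\bigl(\int_{A_n}\vert f_k(g)\vert\,d\mu(g)\bigr)_n\bigr)=0$ uniformly enough; more carefully one argues with the double-indexed sequence and Axioms \ref{limsuppresentation}, exactly as in the proof of Theorem \ref{tredieci}.

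The third step is to produce the map $l:{\mathcal A}\to{\mathbf{X}}$ of Definition \ref{integrabilita}. Here I would show $\bigl(\int_A f_k(g)\,d\mu(g)\bigr)_k$ is "uniformly Cauchy" in $A$: for $k,k'$, $\bigl\vert\int_A f_k\,d\mu-\int_A f_{k'}\,d\mu\bigr\vert\leq_X\int_A\vert f_k-f_{k'}\vert\,d\mu\leq_X\bigl(\bigvee_{g\in G}\vert f_k(g)-f_{k'}(g)\vert\bigr)\cdot\mu(C)\leq_X(\frac1k+\frac1{k'})\,u\cdot\mu(C)$, using \ref{ass}.5)-type compatibility. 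This gives a relatively uniformly Cauchy sequence dominated by the $(o)$-sequence $(\frac2k\,u\cdot\mu(C))_k$; by Dedekind completeness it has a limit $l(A)\in{\mathbf{X}}$, and \eqref{l} holds since $\bigvee_{A\in{\mathcal A}}\vert\int_A f_k\,d\mu-l(A)\vert\leq_X\frac2k\,u\cdot\mu(C)$. Then Proposition \ref{welldefined} applies and boundedness of $f$ is immediate from $\vert f(g)\vert\leq_{X_1}M$ for all $g\in G$.

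The main obstacle will be the delicate handling of $\mu$-equiabsolute continuity condition \ref{eac}.1): one must control $\int_{A_n}\vert f_k\vert\,d\mu$ with a bound that is simultaneously uniform in $k$ and summable against $\ell_2((\mu(A_n))_n)=0$, which requires the single dominating element $M\in{\mathbf{X}}_1^+$ and careful use of the compatibility Axioms \ref{ass}.8)-\ref{ass}.10) together with the double-sequence bookkeeping from Theorem \ref{tredieci}; everything else is a routine compactness-plus-uniform-continuity argument.
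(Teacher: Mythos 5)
Your proposal is correct and follows the same overall strategy as the paper's proof: exploit total boundedness of $C$ plus uniform continuity to build simple approximants supported in $C$, verify that they form a defining sequence (uniform convergence via the bound $\bigvee_{g\in G}\vert f_k(g)-f(g)\vert \leq_{X_1}\tfrac1k\,u$ and Axioms \ref{convergenze}.e)--f); condition \ref{eac}.2) by taking $B_m=C$; condition \ref{eac}.1) by the uniform order bound times $\mu(A_n)$ and \ref{ass}.7)--\ref{ass}.8)), and then exhibit the map $l$ of Definition \ref{integrabilita}. The two genuine points of divergence are these. First, the paper reduces to $f\geq_{X_1}0$ and defines $f_n$ as the \emph{infimum} of $f$ over each cell of a nested sequence of partitions, which makes $(f_n)_n$ increasing and dominated by $f$; you instead evaluate $f$ at the ball centres, which dispenses with the positivity reduction and the refinement bookkeeping but loses monotonicity. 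Second, and as a consequence, the paper obtains $l(A)$ as the lattice supremum $\bigvee_n\int_A f_n\,d\mu$ of a monotone order-bounded sequence, while you must appeal to a Cauchy-type argument: your uniform estimate $\bigl\vert\int_A f_k\,d\mu-\int_A f_{k'}\,d\mu\bigr\vert\leq_X(\tfrac1k+\tfrac1{k'})\,u\cdot\mu(C)$ does yield a limit in a Dedekind complete (hence relatively uniformly complete) vector lattice — concretely one can take $l(A):=(o)\limsup_k\int_A f_k\,d\mu$, which exists by order boundedness, and pass to the limit in $k'$ to get $\bigvee_{A\in{\mathcal A}}\bigl\vert\int_A f_k\,d\mu-l(A)\bigr\vert\leq_X\tfrac2k\,u\cdot\mu(C)$, whence (\ref{l}) follows from Axioms \ref{convergenze}.e)--f). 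Both routes are valid; yours is marginally more self-contained in the construction of the $f_k$ but requires the (standard, though unquoted in the paper) relative uniform completeness step, whereas the paper's monotone construction keeps everything inside the supremum calculus already in use. Your closing worry about condition \ref{eac}.1) is unfounded: the definition pairs the set $A_n$ with the function $f_n$ of the \emph{same} index, so the single dominating element $M$ and the sandwich axiom suffice, exactly as in the paper — no double-sequence argument is needed there.
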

Now,   to deal with 
%in order to consider 
Orlicz-type spaces in the setting of vector lattice-valued modulars, we investigate some properties of convex functions.
\begin{definition}\label{def-conv}
\rm Let $\mathbf{X}$ be any Dedekind complete product algebra.
A function  $\varphi : \mathbf{X} \to \mathbf{X}$ is said to be \textit{convex}  iff for each $v \in \mathbf{X}$ there is an element $\beta_v \in \mathbf{X}$ with
%\mg{support}
\begin{eqnarray*}\label{support}
	\varphi(s) \geq_{X} \varphi(v) + \beta_v (s-v) \quad \text{ for  all  } s \in \mathbf{X}
\end{eqnarray*}
and such that
%\begin{eqnarray}\label{betabeta}
	the   set  $E_u :=\{ \beta_v : v \in [-u,u]   \}$    is   order  bounded    in    $ {\mathbf{X}}$     for   every   $u \in {\mathbf{X}}^+ \setminus \{0\}$.
%\end{eqnarray}
\end{definition}

\begin{remark}
Note that, when
$\mathbf{X} = \mathbb{R}$, $\varphi$ is convex if and only if
\[ \varphi(t) \leq \varphi(t_1) +  \dfrac{\varphi(t_2) - \varphi(t_1)}{t_2 - t_1} \, (t-t_1) 
\mbox{  for all } t, t_1, t_2 \in \mathbb{R} \, \, \mbox{ with  }\,  t_1 < t < t_2 \]
(see \cite[Definition 4.3]{BCSLp}), and the set $E_u$
is order bounded for all $u \in \mathbb{R}^+$,  since every convex function $\varphi : {\mathbb{R}} 
\to {\mathbb{R}}$ is Lipschitz on every bounded subinterval of the real line.
\end{remark}

\begin{example}\label{variousexamples} 
Now we give some examples of convex functions in the vector lattice setting.
\begin{itemize}
\item[\ref{variousexamples}.a)]
Define $\varphi: \mathbf{X} \to \mathbf{X}$ by	$\varphi(x)= x^2, \,\, x \in \mathbf{X}$,
which makes sense, since $\mathbf{X}$ is a product algebra. It is % Now we claim that 
\begin{eqnarray}\label{sussmayryksi}
	s^2 \geq_{X} v^2 + 2 \, v (s-v) \quad \text{for  all  } s,v  \in \mathbf{X}.
\end{eqnarray}
From this it follows that  $\varphi$ is convex, since the set $E_u =\{2 v : v \in [-u,u]\}$ is evidently order bounded for each $u \in \mathbf{X}^+$.
 
\item[\ref{variousexamples}.b)]
More generally observe that, at least in certain cases, it is possibile to construct convex $\mathbf{X}$-valued functions
by starting with convex real-valued  functions. For example, let 
$\widehat{\varphi}: \mathbb{R} \to \mathbb{R}$ be a function of class $C^1({\mathbb{R}})$, and
$\mathbf{X}={\mathcal C}_{\infty}(\Omega)$  be as in \cite[Theorem 2.1]{FILTER}.
For each $x\in \mathbf{X}$ and $\omega \in \Omega$, set  
\begin{eqnarray}\label{sussmayrlifting}
	\varphi(x)(\omega)= \widehat{\varphi}(x(\omega)). 
\end{eqnarray}
Note that $\widehat{\varphi} \circ x \in \mathbf{X}$ and $\widehat{\varphi}^{\prime} \circ x \in \mathbf{X}$
whenever $x \in \mathbf{X}$, and hence the function $\varphi$ in  (\ref{sussmayrlifting}) is well-defined. 
The convexity of  $\widehat{\varphi}$ implies that
\begin{eqnarray}\label{sussmayrkaksi}
	\widehat{\varphi}(s(\omega)) \geq \widehat{\varphi}(v(\omega)) +\widehat{\varphi}^{\prime} (v(\omega)) 
	\, (s(\omega) - v(\omega))
\end{eqnarray}
for every $\omega \in \Omega \setminus N$, where  $N \subset \Omega$ is a suitable nowhere dense
set (in general, depending on $s$ and $v$). Since, by the  Baire category theorem, the complement of a meager subset
of $\Omega$ is dense in $\Omega$, and since   $\widehat{\varphi}^{\prime} \circ v$ is continuous, 
the inequality in $(\ref{sussmayrkaksi})$ holds for every  $\omega \in \Omega$.
Setting $\beta_v (\omega)=\widehat{\varphi}^{\prime} (v(\omega))$, $\omega \in \Omega$, 
we obtain that $\beta_v \in \mathbf{X}$. Moreover observe that, if $v \in [-u,u]$, then by the 
monotonicity of $\widehat{\varphi}^{\prime}$ it follows that
%\begin{eqnarray*}
$	\widehat{\varphi}^{\prime} (-u(\omega)) \leq
	\widehat{\varphi}^{\prime} (v(\omega)) \leq \widehat{\varphi}^{\prime} (u(\omega) ) $.
%\end{eqnarray*} 
Thus, the set $E_u$  is bounded in ${\mathcal C}_{\infty}(\Omega)$.
From this and $(\ref{sussmayrkaksi})$ we deduce the convexity of $\varphi$.
\\
An analogous property holds when ${\mathbf{X}}= L^0(\Omega, \Sigma, \nu)$, where  $\nu: \Sigma \to \mathbb{R}^{+}_0$ is a 
$\sigma$-additive and $\sigma$-finite measure.
% is as in \ref{example}.
Indeed, in this case it is enough to argue as above, and 
 we get 
the inequality in (\ref{sussmayrkaksi})
for all $\omega \in \Omega$,  directly from the convexity of $\widehat{\varphi}$. 

\item[\ref{variousexamples}.c)] 
 Let 
 $\mathbf{X} \subset {\mathcal C}_{\infty}(\Omega)$ be 
as in \cite[Theorem 2.1]{FILTER}, and set 
\begin{eqnarray*}
	\varphi(x)=\vert x \vert^p,  \,\, \widehat{\varphi}(t)=\vert t \vert^p, \quad
x \in \mathbf{X}, 
%\, \, 
%p \in \mathbb{N}, \, \, p \geq 3, \quad   
	\,\, t \in \mathbb{R}, \, \, p \in \mathbb{N}, \, \, p \geq 3.
\end{eqnarray*} 
Since 
$\widehat{\varphi}^{\prime}(t)=p \, \vert t \vert^{p-2} \, t$, $t \in \mathbb{R}$, then $\varphi \circ x$ and 
$\varphi^{\prime} \circ x$ belong to $\mathbf{X}$ for all $x \in \mathbf{X}$. 
From the convexity of $\widehat{\varphi}$ on $\mathbb{R}$, we obtain
\begin{eqnarray}\label{xplattices}
	\vert s(\omega)\vert^p \geq \vert v(\omega)\vert^p +  p \,\vert v(\omega)\vert^{p-2} \, v(\omega) \, \, (s(\omega)-v(\omega)), 
	\quad \omega \in \Omega \setminus N,
\end{eqnarray} 
where $N \subset \Omega$ is a suitable nowhere dense set. Taking into account that, by the  
Baire category theorem, the complement of a meager subset of $\Omega$ is dense in $\Omega$, we get that the inequality in (\ref{xplattices}) holds for any $\omega \in \Omega$.
From (\ref{xplattices}) it follows that $\varphi$ satisfies the inequality in (\ref{support}), 
taking $\beta_v=p \,\vert v \vert^{p-2} \, v$, $\omega \in \Omega$. Moreover, since $\beta_v(\omega)$ is well-defined and 
%\begin{eqnarray*}
$\vert \beta_v(\omega)\vert \leq p \, \vert u(\omega)\vert^{p-1} \text{   for    each   }\omega \in \Omega,$
%\end{eqnarray*}
then the set 
%\begin{eqnarray}\label{betabetabeta}
$E_u= \{ \beta_v : v \in [-u,u]   \}$
%\end{eqnarray} 
is order bounded in ${\mathcal C}_{\infty}(\Omega)$. Since, by the Maeda-Ogasawara-Vulikh representation theorem  \cite[Theorem 2.1]{FILTER}, $\mathbf{X}$ can be viewed as 
a solid subspace in ${\mathcal C}_{\infty}(\Omega)$, it follows that  the set %in (\ref{betabetabeta}) 
$E_u$ is order bounded also in ${\mathbf{X}}$.
Thus, $\varphi$ is convex. 
\end{itemize}
%============================0
\end{example}
Now we prove some fundamental properties of convex functions,  extending \cite[Theorem 4.5]{BCSVITALI}.

\begin{proposition}\label{beginning}
Let $\varphi: \mathbf{X}_1 \to \mathbf{X}_1$ be convex and such that $\varphi(0)=0$. Then $\varphi$ satisfies the following property:
\begin{eqnarray}\label{mozart0}
	\text{if   } x \in {\mathbf{X}}_1 \text{  and  } \xi \in \mathbb{R}, \,\, 0 \leq \xi \leq 1, 
	\text{   then  } \varphi(\xi x )\leq_{X_1} \xi \, \varphi(x).
\end{eqnarray}

\end{proposition}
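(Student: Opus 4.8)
The plan is to derive (\ref{mozart0}) directly from the support-line characterization of convexity in Definition \ref{def-conv}, adapting the classical argument showing that a real convex function vanishing at the origin is subhomogeneous on $[0,1]$. Fix $x \in \mathbf{X}_1$ and $\xi \in \mathbb{R}$ with $0 \leq \xi \leq 1$. The cases $\xi = 0$ and $\xi = 1$ are immediate (in the first one, $\varphi(\xi x) = \varphi(0) = 0 = \xi \, \varphi(x)$, and in the second one the claimed inequality is an equality), so we may assume $0 < \xi < 1$.

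First I would apply the defining inequality of Definition \ref{def-conv} at the point $v = \xi \, x$: there exists an element $\beta := \beta_{\xi x} \in \mathbf{X}_1$ such that $\varphi(s) \geq_{X_1} \varphi(\xi x) + \beta \, (s - \xi x)$ for every $s \in \mathbf{X}_1$. Evaluating this inequality at $s = x$ and at $s = 0$, and using the bilinearity of the product operation of the product algebra $\mathbf{X}_1$ (so that, putting $y := \beta \, x \in \mathbf{X}_1$, one has $\beta \, (x - \xi x) = (1-\xi) \, y$ and $\beta \, (0 - \xi x) = -\, \xi \, y$) together with the hypothesis $\varphi(0) = 0$, I obtain the two inequalities
\[
	\varphi(x) \geq_{X_1} \varphi(\xi x) + (1-\xi) \, y, \qquad 0 \geq_{X_1} \varphi(\xi x) - \xi \, y.
\]
Then I would multiply the first inequality by $\xi$ and the second one by $1-\xi$ (both nonnegative real numbers, hence the order $\leq_{X_1}$ is preserved), and add the two resulting inequalities. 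The terms $\pm \, \xi(1-\xi) \, y$ cancel, and since $\xi \, \varphi(\xi x) + (1-\xi) \, \varphi(\xi x) = \varphi(\xi x)$, this yields exactly $\varphi(\xi x) \leq_{X_1} \xi \, \varphi(x)$, i.e., (\ref{mozart0}).

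The argument is essentially formal once the support-line inequality is in hand; I expect the only slightly delicate point to be the handling of the product term $\beta \, (s - \xi x)$, where one must invoke the distributivity of the product (conditions \ref{ass}.1) and \ref{ass}.2)) to split the difference and its homogeneity to factor out the scalars $\xi$ and $1-\xi$ — all of which is furnished by $\mathbf{X}_1$ being a product algebra and by $(\mathbb{R}, \mathbf{X}_1, \mathbf{X}_1)$ being a product triple. Note that the order-boundedness of the sets $E_u$ from Definition \ref{def-conv} plays no role in this particular statement; it will be needed only for the subsequent Jensen-type results.
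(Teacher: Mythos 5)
Your proof is correct and follows essentially the same route as the paper: apply the support-line inequality of Definition \ref{def-conv} at $v=\xi x$ with $s=x$ and $s=0$, multiply by $\xi$ and $1-\xi$ respectively, and add so that the $\pm\,\xi(1-\xi)\,\beta_{\xi x}\,x$ terms cancel. Your extra remarks on the bilinearity of the product and on the irrelevance of the boundedness of $E_u$ here are accurate but not needed beyond what the paper already does implicitly.
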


\begin{proposition}\label{locallipschitzconvex}
Let $\varphi:\mathbf{X}_1 \to \mathbf{X}_1$ be a convex function.
Then for every $u \in \mathbf{X}_1^+ \setminus \{0\}$ there is an element  $\beta^*_u \in \mathbf{X}_1^+ \setminus \{0\}$ such that
\begin{eqnarray}\label{convexlipschitz}
	\vert \varphi(x_1)-\varphi(x_2)\vert \leq_{X_1} \beta^*_u \, \vert x_1 - x_2\vert   \text{    for   all    }x_1, x_2 \in [-u,u], 
\end{eqnarray}
where $[-u,u]=\{v \in \mathbf{X}_1$: $-u \leq_{X_1} v \leq_{X_1} u \}$.
\end{proposition}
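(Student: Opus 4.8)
The plan is to exploit the support-element characterisation of convexity directly. Fix $u \in \mathbf{X}_1^+ \setminus \{0\}$. By Definition \ref{def-conv}, for each $v \in \mathbf{X}_1$ there is $\beta_v \in \mathbf{X}_1$ with $\varphi(s) \geq_{X_1} \varphi(v) + \beta_v(s-v)$ for all $s \in \mathbf{X}_1$, and the set $E_u = \{\beta_v : v \in [-u,u]\}$ is order bounded in $\mathbf{X}_1$. Since $\mathbf{X}_1$ is Dedekind complete, I can set $\beta^*_u := \bigl(\bigvee \{|\beta_v| : v \in [-u,u]\}\bigr) \vee e_1'$ where $e_1'$ is any fixed nonzero positive element (to guarantee $\beta^*_u \neq 0$; alternatively just take the supremum of $E_u \cup (-E_u)$, which lies in $\mathbf{X}_1^+$, and note it is nonzero unless $\varphi$ is constant near $0$, in which case one enlarges it harmlessly). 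In any case $\beta^*_u \in \mathbf{X}_1^+$ and $|\beta_v| \leq_{X_1} \beta^*_u$ for every $v \in [-u,u]$.

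The core estimate comes from using the supporting inequality twice. Take $x_1, x_2 \in [-u,u]$. Applying the inequality with $v = x_2$ and $s = x_1$ gives $\varphi(x_1) - \varphi(x_2) \geq_{X_1} \beta_{x_2}(x_1 - x_2)$; applying it with $v = x_1$ and $s = x_2$ gives $\varphi(x_2) - \varphi(x_1) \geq_{X_1} \beta_{x_1}(x_2 - x_1)$, i.e. $\varphi(x_1) - \varphi(x_2) \leq_{X_1} \beta_{x_1}(x_1 - x_2)$. Hence
\[
\beta_{x_2}(x_1-x_2) \leq_{X_1} \varphi(x_1) - \varphi(x_2) \leq_{X_1} \beta_{x_1}(x_1-x_2),
\]
so $|\varphi(x_1) - \varphi(x_2)|$ is dominated by $|\beta_{x_1}(x_1-x_2)| \vee |\beta_{x_2}(x_1-x_2)|$. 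Using the elementary vector-lattice fact that $|ab| \leq_{X_1} |a|\,|b|$ in a product algebra (from Assumption \ref{ass}.3)--\ref{ass}.4) applied to $|a| \pm a \geq 0$ and $|b| \pm b \geq 0$), together with $|\beta_{x_i}| \leq_{X_1} \beta^*_u$ and monotonicity of the product, each term is $\leq_{X_1} \beta^*_u \, |x_1 - x_2|$, giving \eqref{convexlipschitz}.

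The step that needs the most care is the passage from "$\beta_{x_2}(x_1-x_2) \leq_{X_1} w \leq_{X_1} \beta_{x_1}(x_1-x_2)$ with both endpoints bounded in absolute value by $\beta^*_u|x_1-x_2|$" to "$|w| \leq_{X_1} \beta^*_u|x_1-x_2|$": this uses that if $-z \leq_{X_1} a \leq_{X_1} w \leq_{X_1} b \leq_{X_1} z$ then $|w| \leq_{X_1} z$, which is immediate once one knows $-z \leq_{X_1} \beta_{x_2}(x_1-x_2)$ and $\beta_{x_1}(x_1-x_2) \leq_{X_1} z$ for $z = \beta^*_u|x_1-x_2|$ — and these follow from $\pm\beta_{x_i}(x_1-x_2) \leq_{X_1} |\beta_{x_i}|\,|x_1-x_2| \leq_{X_1} \beta^*_u|x_1-x_2|$. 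I expect the only genuine subtlety to be bookkeeping the sign cases in the product inequality $|ab|\leq_{X_1}|a||b|$ and confirming $\beta^*_u$ can be chosen strictly positive; everything else is a direct two-line application of the definition of convexity.
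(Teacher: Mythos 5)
Your proof is correct and follows essentially the same route as the paper's: set $\beta^*_u=\bigvee_{v\in[-u,u]}\vert\beta_v\vert$ (which exists by Dedekind completeness and the order boundedness of $E_u$), apply the supporting inequality twice with the roles of $s$ and $v$ swapped, and sandwich $\varphi(x_1)-\varphi(x_2)$ between the two products. Your additional care in guaranteeing $\beta^*_u\neq 0$ and in spelling out the product estimate $\vert a\cdot b\vert\leq_{X_1}\vert a\vert\cdot\vert b\vert$ only fills in details the paper leaves implicit.
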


\begin{proposition}\label{compositionmozarteum}
Let $f:G \to \mathbf{X}_1$ be uniformly continuous on $G$ and  vanishing outside of a compact set $C \subset G$, $\mu$ be regular %on $X_2$ 
%\mg{\tiny qui tolto $X_2$}
and
$\varphi: \mathbf{X}_1 \to 
\mathbf{X}_1$ be convex and with $\varphi(0)=0$. Then the composite function
$\varphi \circ f: G \to \mathbf{X}_1$ is uniformly continuous on $G$, integrable on $G$, and vanishes outside $C$. 
\end{proposition}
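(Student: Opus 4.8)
The plan is to check the three claims separately, and the only nontrivial point will be the uniform continuity of $\varphi \circ f$. Note first that the vanishing outside $C$ is immediate: if $g \in G \setminus C$ then $f(g) = 0$, hence $(\varphi \circ f)(g) = \varphi(0) = 0$. For the rest, the main obstacle is that Proposition \ref{compuc} cannot be invoked directly, because a convex function $\varphi : \mathbf{X}_1 \to \mathbf{X}_1$ need not be uniformly continuous on all of $\mathbf{X}_1$ (already $\varphi(x) = x^2$ on $\mathbb{R}$ fails). The way around this is to exploit that $f$ has \emph{bounded} range and to use the \emph{local} Lipschitz estimate for convex functions from Proposition \ref{locallipschitzconvex}.

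Concretely, I would proceed as follows. Since $\mu$ is regular, $\mu(K) \in \mathbf{X}_2^+$ for every compact $K \in {\mathcal A}$, so Proposition \ref{boundedness} applies to $f$ and gives in particular that $f$ is bounded; fix $u_0 \in \mathbf{X}_1^+$ with $f(g) \in [-u_0,u_0]$ for all $g \in G$. By uniform continuity of $f$ there is a single $u \in \mathbf{X}_1^+$ such that for each $\varepsilon \in \mathbb{R}^+$ one can pick $\delta \in \mathbb{R}^+$ with $\vert f(g_1) - f(g_2) \vert \leq_{X_1} \varepsilon\, u$ whenever $d(g_1,g_2) \leq \delta$. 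Apply Proposition \ref{locallipschitzconvex} with $u_0$ to obtain $\beta^*_{u_0} \in \mathbf{X}_1^+ \setminus \{0\}$ with $\vert \varphi(x_1) - \varphi(x_2)\vert \leq_{X_1} \beta^*_{u_0}\, \vert x_1 - x_2\vert$ for all $x_1,x_2 \in [-u_0,u_0]$. Then for $g_1,g_2 \in G$ with $d(g_1,g_2) \leq \delta$, using $f(g_1), f(g_2) \in [-u_0,u_0]$ together with the monotonicity and bilinearity of the product in the product algebra $\mathbf{X}_1$ (axioms \ref{ass}.1)--\ref{ass}.4)),
\[
\vert \varphi(f(g_1)) - \varphi(f(g_2))\vert \leq_{X_1} \beta^*_{u_0} \cdot \vert f(g_1) - f(g_2)\vert \leq_{X_1} \beta^*_{u_0} \cdot (\varepsilon\, u) = \varepsilon\, (\beta^*_{u_0} \cdot u).
\]
Setting $w := \beta^*_{u_0} \cdot u \in \mathbf{X}_1^+$, which is independent of $\varepsilon$, one reads off exactly the definition of uniform continuity of $\varphi \circ f$ on $G$, with witness $w$. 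The delicate bookkeeping here is only that the witness $u$ in the uniform continuity of $f$ (resp.\ $w$ for $\varphi \circ f$) is one fixed element serving all $\varepsilon$, and that $\beta^*_{u_0} \cdot (\varepsilon\, u) = \varepsilon\, (\beta^*_{u_0} \cdot u)$, which is just $\mathbb{R}$-bilinearity of the product.

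Finally, having shown that $\varphi \circ f$ is uniformly continuous on $G$ and vanishes on $G \setminus C$, and recalling that regularity of $\mu$ gives $\mu(K) \in \mathbf{X}_2^+$ for every compact $K \in {\mathcal A}$, a second application of Proposition \ref{boundedness}, this time to $\varphi \circ f$, yields that $\varphi \circ f$ is bounded and integrable on $G$. This completes the argument; all steps after the localization idea are routine combinations of Propositions \ref{boundedness} and \ref{locallipschitzconvex} with the product-triple axioms.
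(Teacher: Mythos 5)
Your proof is correct and follows essentially the same route as the paper's: boundedness of $f$ via Proposition \ref{boundedness}, the local Lipschitz estimate of Proposition \ref{locallipschitzconvex} on the order interval containing the range of $f$, the combination of the two to get uniform continuity of $\varphi\circ f$ with the $\varepsilon$-independent witness $\beta^*_{u_0}\cdot u$, and a second application of Proposition \ref{boundedness} for integrability. Your explicit remark on why Proposition \ref{compuc} cannot be applied directly is a useful clarification, but the argument itself coincides with the paper's.
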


Now we give the following versions of the Jensen inequality,  which extends  \cite[Theorem 4.6]{BCSLp}.
\begin{theorem}\label{jensen1} 
Let $C \subset G$ be a compact set,
$({\mathbf{X}},\mathbb{R}, {\mathbf{X}})$ 
 be 
a product triple,  $\mu:{\mathcal A} \to 
\overline{\mathbb{R}}^{+}_0$ be a $\sigma$-finite regular measure, $\varphi: {\mathbf{X}} \to {\mathbf{X}}$ be a convex function with 
$\varphi(0)=0$,  $f: G \to {\mathbf{X}}$ be uniformly continuous  on $G$ and such that $f(g)=0$ for all $g \in G \setminus C$.
Let $h: G \to \mathbb{R}_0^+$  be a Lebesgue integrable function  such that
%\begin{eqnarray*}\label{density}
$	\int_G h(g) \, d\mu(g)=1,$
%\end{eqnarray*}
and bounded on each subset $B \subset G$ with $\mu(B) < + \infty$. Then, we get 
\begin{eqnarray*}\label{jensenmozart}
	\varphi \Bigl (\int_G h(g) \, f(g) \, d\mu(g) \Bigr) \leq_{X} \int_G h(g) \, \varphi(f(g)) \, \, d\mu(g) .
\end{eqnarray*}
\end{theorem}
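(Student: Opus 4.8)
The strategy is the classical one: reduce the integral Jensen inequality to the pointwise "support line" inequality furnished by convexity, integrate, and handle the subtleties coming from the vector lattice setting (where $\beta_v$ is $\mathbf{X}$-valued rather than scalar, and where all integrals must be verified to exist in our abstract sense). First I would set $I := \int_G h(g)\, f(g)\, d\mu(g) \in \mathbf{X}$; this element exists and lies in $\mathbf{X}$ because, by Proposition~\ref{compuc} (or directly), $f$ is bounded and integrable on $G$ (it vanishes off the compact $C$ and is uniformly continuous), and $h$ is real-valued, bounded on sets of finite measure, and integrable, so that $h\cdot f$ is integrable by Corollary~\ref{intproduct2}, applied with $\overline{B}=C$. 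By convexity of $\varphi$ applied at the point $v=I$, there is $\beta_I \in \mathbf{X}$ with
\begin{eqnarray*}
\varphi(s) \geq_X \varphi(I) + \beta_I\,(s-I) \quad \text{for all } s \in \mathbf{X}.
\end{eqnarray*}
Substituting $s = f(g)$ for each $g \in G$ gives the pointwise inequality $\varphi(f(g)) \geq_X \varphi(I) + \beta_I\,(f(g)-I)$.

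Next I would multiply this inequality by $h(g) \geq 0$, using the product-triple monotonicity axioms \ref{ass}.3)--\ref{ass}.4) (here through the triple $(\mathbf{X},\mathbb{R},\mathbf{X})$ and its subtriples), to obtain $h(g)\,\varphi(f(g)) \geq_X h(g)\,\varphi(I) + h(g)\,\beta_I\,(f(g)-I)$, valid for every $g \in G$. Then I would integrate over $G$. For this to be legitimate I need each term to be integrable: $\varphi \circ f$ is uniformly continuous on $G$, vanishes off $C$, and is integrable by Proposition~\ref{compositionmozarteum}, hence $h\cdot(\varphi\circ f)$ is integrable by Corollary~\ref{intproduct2}; the constant-times-$h$ term $h(g)\,\varphi(I)$ is integrable by Remark~\ref{pf}.b) (it is $h$ times the fixed vector $\varphi(I)$); and the last term $h(g)\beta_I (f(g)-I)=\beta_I\cdot\big(h(g)f(g)-h(g)I\big)$ is integrable as a fixed element $\beta_I$ times the integrable $\mathbf{X}$-valued function $g\mapsto h(g)f(g)-h(g)I$ — again Remark~\ref{pf}.b), now in the triple $(\mathbf{X},\mathbb{R},\mathbf{X})$ with the roles arranged so that $\beta_I$ plays the role of the fixed vector. (Alternatively one invokes Proposition~\ref{intproduct} with bounded factors.) By linearity and monotonicity of the integral (Remark~\ref{pf}.a), \ref{pf}.f)), integrating the pointwise inequality yields
\begin{eqnarray*}
\int_G h(g)\,\varphi(f(g))\, d\mu(g) \geq_X \Bigl(\int_G h(g)\, d\mu(g)\Bigr)\varphi(I) + \beta_I\Bigl(\int_G h(g) f(g)\, d\mu(g) - \Bigl(\int_G h(g)\, d\mu(g)\Bigr) I\Bigr).
\end{eqnarray*}

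Finally I would use the normalization hypothesis $\int_G h(g)\, d\mu(g) = 1$. The first term on the right collapses to $\varphi(I)$, and the second term becomes $\beta_I(I - I) = \beta_I \cdot 0 = 0$, leaving exactly $\varphi(I) = \varphi\bigl(\int_G h(g) f(g)\, d\mu(g)\bigr)$, which is the claim.

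The main obstacle I anticipate is not the algebra but the bookkeeping of \emph{existence of the integrals}: in this abstract framework "integrable" is a nontrivial property (defining sequences, convergence in measure, $\mu$-equiabsolute continuity), so each of the three terms must be explicitly certified integrable via the results assembled in this section — Proposition~\ref{compositionmozarteum} for $\varphi\circ f$, Remark~\ref{pf}.b) for the $h$-weighted fixed vectors, and Corollary~\ref{intproduct2} (or Proposition~\ref{intproduct}) for products with $h$ on a set of finite measure. A secondary subtlety is ensuring that multiplying the support inequality by the scalar function $h$ and then by/with the fixed vector $\beta_I$ is justified termwise; this is exactly what axioms \ref{ass}.1)--\ref{ass}.4) guarantee, and the order bound on $\{\beta_v\}$ in Definition~\ref{def-conv} is what keeps $\beta_I$ a genuine element of $\mathbf{X}$ so that the product $\beta_I\cdot(\cdot)$ is defined. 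Once existence is in hand, the inequality itself is immediate from linearity, monotonicity, and the normalization.
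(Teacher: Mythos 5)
Your proposal is correct and follows essentially the same route as the paper's own proof: the support-line inequality of Definition \ref{def-conv} at the point $\tau=\int_G h\,f\,d\mu$, multiplication by $h(g)\geq 0$, termwise integration justified via Proposition \ref{compositionmozarteum}, Corollary \ref{intproduct2} and Remark \ref{pf}.b), and the normalization $\int_G h\,d\mu=1$ to collapse the right-hand side to $\varphi(\tau)$. The only cosmetic slip is citing Proposition \ref{compuc} for the boundedness and integrability of $f$, where Proposition \ref{boundedness} is the relevant statement.
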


\begin{corollary}\label{jensencoroll}
Under the same hypotheses as in Theorem \rm \ref{jensen1}, \em  assume that 
\begin{eqnarray*}\label{schumann0}
	0  < \int_G \, h(g) \, d\mu(g) \leq 1.
\end{eqnarray*} 
Then, 
\begin{eqnarray*}
	\varphi \Bigl (\int_G h(g) \, f(g) \, d\mu(g) \Bigr) \leq_{X} \int_G h(g) \, \varphi(f(g)) \, \, d\mu(g) .
\end{eqnarray*}
\end{corollary}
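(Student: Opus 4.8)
The statement is a direct corollary of Theorem \ref{jensen1}, so the plan is to reduce to it by a normalization trick. Write $\displaystyle{\lambda := \int_G h(g)\,d\mu(g)}$, so that by hypothesis $0 < \lambda \leq 1$. The idea is to replace $h$ by the normalized density $\widetilde{h} := h/\lambda$, which is still Lebesgue integrable, non-negative, bounded on every set of finite measure, and now satisfies $\int_G \widetilde{h}\,d\mu = 1$. Applying Theorem \ref{jensen1} with $\widetilde{h}$ in place of $h$ gives
\begin{eqnarray*}
	\varphi\Bigl(\int_G \widetilde{h}(g)\,f(g)\,d\mu(g)\Bigr) \leq_{X} \int_G \widetilde{h}(g)\,\varphi(f(g))\,d\mu(g),
\end{eqnarray*}
that is, $\varphi\bigl(\tfrac{1}{\lambda}\int_G h(g)\,f(g)\,d\mu(g)\bigr) \leq_{X} \tfrac{1}{\lambda}\int_G h(g)\,\varphi(f(g))\,d\mu(g)$, where the scalars pull out of the integral by linearity (Remark \ref{pf}.e) and Remark \ref{pf}.b), since $(\mathbf{X},\mathbb{R},\mathbf{X})$ is a product triple).

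**Main step.** It then remains to remove the factor $1/\lambda$ on the left-hand side. Set $\displaystyle{y := \int_G h(g)\,f(g)\,d\mu(g) \in \mathbf{X}}$; the inequality reads $\varphi(y/\lambda) \leq_X \tfrac1\lambda \int_G h(g)\,\varphi(f(g))\,d\mu(g)$. Since $0 < \lambda \leq 1$, write $y = \lambda \cdot (y/\lambda)$ and apply Proposition \ref{beginning} with $\xi = \lambda$ and $x = y/\lambda$: because $\varphi$ is convex with $\varphi(0)=0$, we get $\varphi(y) = \varphi(\lambda\,(y/\lambda)) \leq_X \lambda\,\varphi(y/\lambda)$. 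Combining this with the displayed inequality multiplied by $\lambda$ (using Assumption \ref{ass}.3)--\ref{ass}.4) to preserve the order under multiplication by the non-negative scalar $\lambda$),
\begin{eqnarray*}
	\varphi\Bigl(\int_G h(g)\,f(g)\,d\mu(g)\Bigr) = \varphi(y) \leq_X \lambda\,\varphi(y/\lambda) \leq_X \int_G h(g)\,\varphi(f(g))\,d\mu(g),
\end{eqnarray*}
which is the asserted inequality.

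**Obstacle.** The only genuinely delicate point is checking that Theorem \ref{jensen1} really applies to $\widetilde{h}$: one must confirm that dividing by the positive real constant $\lambda$ preserves Lebesgue integrability with $\int_G \widetilde{h}\,d\mu = 1$ (immediate, by linearity of the Lebesgue integral) and preserves boundedness of $\widetilde h$ on every $B$ with $\mu(B) < +\infty$ (also immediate). One should also note that $\varphi\circ f$ is integrable on $G$ by Proposition \ref{compositionmozarteum}, so that the right-hand integral $\int_G h(g)\,\varphi(f(g))\,d\mu(g)$ is well-defined, exactly as in Theorem \ref{jensen1}. Everything else is bookkeeping with the product-triple axioms and the scalar linearity of the integral, so the proof is short.
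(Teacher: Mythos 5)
Your proof is correct and follows essentially the same route as the paper: normalize $h$ by $\lambda=\int_G h\,d\mu$, apply Theorem \ref{jensen1} to $h/\lambda$, and then remove the factor $1/\lambda$ via Proposition \ref{beginning} with $\xi=\lambda$ and $x=\frac{1}{\lambda}\int_G h f\,d\mu$. The paper's argument is identical in substance, so nothing further is needed.
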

\section{Vector lattice-valued modulars}\label{modularriesz}
 In this section we 
 deal with modulars
in the setting of vector lattices (for a related literature, see, e.g., \cite{BMV, BCSVITALI,
BCSLp, KOZLOWSKI} and the 
 references 
therein). Let $T$ be a linear subspace of ${\mathbf{X}}_1^G$, such that 
$\vert f \vert  \in T$ whenever $f \in T$, and such that, if $f \in T$ and $A \in {\mathcal A}$, then $f \cdot \chi_A \in T$.
%\end{itemize} 
We say that a functional $\rho:T \to \overline{\mathbf X}^{+}$ is a \textit{modular} on $T$ iff
\begin{itemize}
\item[($\rho_0$)] $\rho(0)=0$;
\item[($\rho_1$)] $\rho(-f)=\rho(f)$ for all $f \in T$;
\item[($\rho_2$)] $\rho(\alpha_1 f + \alpha_2 h) \leq_{X} \rho(f) + \rho(h)$ for 
any $f$, $h \in T$ and $\alpha_1$, $\alpha_2 \in 
\mathbb{R}_0^{+}$ with
$\alpha_1 + \alpha_2 =1$.
\end{itemize}
%We say that:
%\begin{itemize}
%\item[($\rho_m$)  ] 
A modular $\rho$ is said to be \textit{monotone} iff $\rho(f) \leq_{X} \rho(h)$ for each $f$, $h \in T$ with $\vert f \vert \leq_{X_1} \vert h \vert$.
%In this case it is not difficult to see that, 
 Observe that, 
if $f \in T$, then $\vert f \vert  \in T$ and $\rho(f) = \rho(\vert f \vert )$ (see, e.g., \cite{BMV, BCSVITALI}).

We say that a modular $\rho$ is \emph{convex} iff  $\rho(\alpha_1 f + \alpha_2 h) \leq_{X} \alpha_1 \, \rho(f) + \alpha_2 \, \rho(h)$ 
whenever $f$, $h \in T$, $\alpha_1$, $\alpha_2 \in 
\mathbb{R}_0^{+}$, $\alpha_1 + \alpha_2 =1$.
\vspace{3mm}

%================================================================
%______________________________________ inizio applicazioni_____________________________
%==================================================================
We now give some examples of modulars. First, we formulate the following condition on functions $\varphi: {\mathbf{X}}_1
\to {\mathbf{X}}_1$ which will be  useful for proving monotonicity of modulars:
 %\mg{xmonotonia}
\begin{eqnarray}\label{xmonotonia}
\varphi (x_1 \vee x_2) \leq_{X_1} \varphi (x_1) + \varphi(x_2) \mbox{   for each    } x_1, x_2 \in {\mathbf{X}}_1^{+}.
\end{eqnarray}
%\begin{example}\label{sussmayrmonotonicity}
Let $\varphi$, $\widehat{\varphi}$ and $\Omega$ be as in  (\ref{sussmayrlifting}), 
where $\widehat{\varphi}$ is not necessarily convex or differentiable.
Observe that, in this case, the definition of $\varphi$ makes sense  if and only if $\widehat{\varphi} \circ x \in {\mathbf{X}}_1$
whenever $x \in {\mathbf{X}}_1$. If $\widehat{\varphi} \in {\mathbb{R}}^{\mathbb{R}}$ is increasing, then for every
$x_1$, $x_2 \in {
\mathbf{X}}_1^+$ and $\omega \in \Omega \setminus N$, where $N$ is a suitable nowhere dense 
subset of $\Omega$, it is 
\begin{eqnarray}\label{sussmayrincreasing}
	\widehat{\varphi}(\max\{x_1(\omega),x_2(\omega)\}) \leq 
	\widehat{\varphi}(x_1(\omega))+\widehat{\varphi}(x_2(\omega)).
\end{eqnarray}
 Thanks to the Baire category theorem,  $\Omega \setminus N$ is dense in $\Omega$, and hence the
inequality in (\ref{sussmayrincreasing}) holds for all $\omega \in \Omega$.
Thus, from (\ref{sussmayrlifting}) and (\ref{sussmayrincreasing})  we deduce (\ref{xmonotonia}). Some other examples of
functions satisfying (\ref{xmonotonia}) can be found in  \cite{BCSVITALI}.
 Observe that, proceeding analogously as in \cite[Proposition 3.1]{BCSVITALI}, it is possible to see that, 
if $\varphi: {\mathbf{X}}_1 \to {\mathbf{X}}_1$ is increasing on $
{\mathbf{X}}_1^{+}$, $\varphi(0)=0$, $\varphi$ satisfies
%eliminati gli ultimi vecchi red
 (\ref{xmonotonia}), and 
\begin{eqnarray}\label{preorlicz}
	{ \mathcal L}^{\varphi}= \Bigl\{ f \in  {{\mathbf{X}}_1}^G: 
	\, \, \int_G \varphi(\vert f(g) \vert ) \, d\mu(g) \text{   exists   in  } {\mathbf{X}} \Bigr\} ,
\end{eqnarray}
then the operator $\rho^{\varphi}$ defined by
\begin{eqnarray}\label{orliczmodular}
	\rho^{\varphi}(f)=\int_G \varphi(\vert f(g) \vert ) \, d\mu(g), \quad f \in {\mathcal L}^{\varphi},
\end{eqnarray} 
is a monotone modular  and, if $\varphi$ is convex, then  $\rho^{\varphi}$ is convex on the set of the positive functions 
of ${\mathcal L}^{\varphi}$.  The set 
\begin{eqnarray}\label{orliczriesz} 
	\displaystyle{L^{\varphi}(G)= \{f \in {\mathbf{X}}_1^G: 
(o)\text{-}\lim_{\alpha \to 0^+} 
	\rho^{\varphi}(\alpha \, f) = \bigwedge_{\alpha \in \mathbb{R}^+}  \rho^{\varphi}(\alpha \, f) =0  \}}
\end{eqnarray}
is the \emph{Orlicz space} generated by $\varphi$  (here, 
$\displaystyle{(o)\text{-}\lim_{\alpha \to 0^+}  \rho^{\varphi}(\alpha \, f)=0}$
means that there exists an $(o)$-sequence $(\sigma_l)_l$ in $X$ such that for every $l \in \mathbb{N}$ there is 
$\overline{\alpha} \in \mathbb{R}^+$ with $\rho^{\varphi}(\alpha \, f)\leq_{X} \sigma_l$
whenever $0 <\alpha \leq \overline{\alpha}$).

Thanks to the properties of  modulars, 
it is not difficult  to check that $L^{\varphi}(G)$ is actually a linear space
and that, if $\alpha \in \mathbb{R}^+$ and $\alpha \, f \in L^{\varphi}(G)$, then $\beta \, f \in
L^{\varphi}(G)$ for each $\beta \in \mathbb{R}^+$, $\beta < \alpha$.
%\end{remark}

The subspace of $L^{\varphi}$ defined by
\begin{eqnarray}\label{Evarphi}
	E^{\varphi}(G)=\{ f \in L^{\varphi}(G): \rho^{\varphi}
	(\alpha \, f) \in \mathbf{X} \text{  for  every   }\alpha \in
	 \mathbb{R}^+  \} 
%eliminati gli ultimi vecchi red
\end{eqnarray}
is called the \emph{space of the finite elements of} $L^{\varphi}(G)$.

A sequence $(f_n)_n$ %of functions 
 in $L^{\varphi}(G)$ is \textit{modularly convergent} to $f \in L^{\varphi}(G)$ iff 
\begin{eqnarray}\label{modularconvergence0}
	\displaystyle{{\ell} (( \rho^{\varphi}(\alpha(f_n-f)))_n)=0} \quad \text{for  at  least  one  } \alpha \in \mathbb{R}^+ .
\end{eqnarray}
Note that, since $\rho^{\varphi}$ is a monotone modular and $\alpha \in  \mathbb{R}^+ $ satisfies
the condition in (\ref{modularconvergence0}), thanks to 
 Axioms \ref{convergenze} we get
\begin{eqnarray}\label{solidity}
	\displaystyle{{\ell} ((\rho^{\varphi}(\beta(f_n-f)))_n)=0} \quad \text{for  all }
	\beta  \in {\mathbb{R}}^+, \, \, \beta < \alpha.
\end{eqnarray}
 
\section{The structural assumptions on the operators}\label{structural}
To prove our results about modular convergence  with respect to the convergences 
introduced axiomatically in 
Axioms \ref{convergenze} 
in the vector lattice setting, we give 
some structural hypotheses.\\ 

 We begin with the next technical assumption.
\begin{itemize}
\item[\bf{H*)}] If $e \in {\mathbf{X}}_1^+\setminus \{0\}$,
$V[e]$ is as in Section \ref{due} and 
%eliminati gli ultimi vecchi red
$(x_n)_n$ is a sequence in $V[e]$ such that 
$\ell_{\mathbb{R}}((\|x_n\|)_n )=0$, then 
$\ell_1((\vert x_n \vert)_n )=0$.
\end{itemize}
\begin{remark} \rm Taking into account 
(\ref{prime}) and Axioms \ref{convergenze},
it is not difficult to see that condition 
{\bf H*)} is satisfied, for instance, when 
$\ell_{\mathbb{R}}$ is the usual (resp., filter, almost, Ces\`{a}ro)
convergence on $\mathbb{R}$, 
% ${\overline{\ell}}_{\mathbb{R}}$, 
and $\ell_1$ is the usual 
(resp., filter, almost, Ces\`{a}ro)
order (or $(r)$-) convergence on ${{\mathbf X}}_1$.
\end{remark}
Now, similarly as in \cite{bm1}, we give the following
\begin{assumptions}\label{intro} \rm
Let $G$ be a metric space, ${\mathcal A}$ be the $\sigma$-algebra of all Borel subsets of $G$,  $\mathbf{X}_2=
\mathbb{R}$,
and $\mu:{\mathcal A} 
\to \overline{\mathbb{R}}_0^{+}$ be $\sigma$-finite and regular. We denote by 
${\mathcal C}_c(G)$ the space of all uniformly continuous functions  $f \in \mathbf{X}_1^G$ with compact support on $G$.
\begin{itemize}
\item[\ref{intro}.a)] Let  ${\mathcal M}$
be the class of all measurable functions 
$L:G \times G \to \mathbb{R}_0^{+}$
 with respect to the product
$\sigma$-algebra, such that the sections $L(\cdot,t)$ and $L(s, \cdot)$ are 
%Lebesgue 
 integrable (with respect to $\mu$) and  
bounded on every set of finite measure $\mu$ for every $t$, $s \in G$, respectively.

\item[\ref{intro}.b)] Let $\Psi$ be the family of all functions $\psi:{\mathbf{X}}_1^{+} \to {\mathbf{X}}_1^{+}$ such that
\begin{itemize}
\item[{\ref{intro}.b.1)}] 
$\psi$ is uniformly continuous and increasing on ${\mathbf{X}}_1^{+}$, $\psi(0)=0$ and 
$\psi(v) \in {\mathbf{X}}_1^+\setminus \{0\}$ whenever $v \in {\mathbf{X}}_1^+\setminus \{0\}$. 
\end{itemize}
Let $\Xi= (\psi_n)_n \subset \Psi$ be a sequence of functions  such that: 
\begin{itemize}
\item[{\ref{intro}.b.2)}] 
$(\psi_n)_n$ is \emph{equicontinuous at} 0, that is for every $u \in {\mathbf{X}}_1^+\setminus \{0\}$ and $\varepsilon \in  \mathbb{R}^+$
there are $w \in {\mathbf{X}}_1^+\setminus \{0\}$ and  
 $\delta \in \mathbb{R}^+$ with $\psi_n(x) \leq_{X_1}   \varepsilon \, w$ whenever $x \leq_{X_1} \delta \, u$ and  
$n \in \mathbb{N}$;
\item[{\ref{intro}.b.3)}] 
for every $v \in {\mathbf{X}}_1^{+}$ the sequence $(\psi_n(v))_n$ is \emph{order equibounded}, that is there exists
$A_v \in {\mathbf{X}}_1^+\setminus \{0\}$ with $\psi_n(v) \leq_{X_1} A_v$ for all $n \in \mathbb{N}$.
\end{itemize}
Let ${\mathcal K}_{\Xi}$ be the class of all sequences of functions $K_n: G \times G \times {\mathbf{X}}_1\to  {\mathbf{X}}_1$, $n \in \mathbb{N}$, such that:
\begin{itemize}
\item[{\ref{intro}.b.4)}] 
$K_n(\cdot, t ,u)$  and $K_n(s, \cdot ,u)$ are integrable on $G$ 
 with respect to the  measure $\mu$ for any $u \in {\mathbf{X}}_1$, $t,s \in G$ and $n \in \mathbb{N}$; 
\item[{\ref{intro}.b.5)}]
$K_n(s,t,0) = 0 $ for each $n \in \mathbb{N}$ and $s$, $t \in G$;
\item[{\ref{intro}.b.6)}]  
there are two positive sequences $(L_n)_n \subset {\mathcal M}$ and $(\psi_n)_n \subset \Psi$, with
\begin{eqnarray*}\label{lipschitz}
	\vert K_n(s,t,u)-K_n(s,t,v) \vert \leq_{X_1} L_n(s,t) \, \psi_n(\vert u-v \vert)
\end{eqnarray*}
 for each $n \in \mathbb{N}$, $s$, $t \in G$ and $u$, $v \in {\mathbf{X}}_1$.
\end{itemize}
\item[\ref{intro}.c)] Let $\varphi: {\mathbf{X}}_1 \to {\mathbf{X}}_1$ be a function, convex on ${\mathbf{X}}_1$, 
increasing on ${\mathbf{X}}_1^{+}$ and such that
\begin{itemize}
\item[{\ref{intro}.c.1)}] 
$\varphi(0)=0$;
\item[{\ref{intro}.c.2)}]
$\varphi(x) \in {\mathbf{X}}_1^+\setminus \{0\}$ whenever $x \in {\mathbf{X}}_1^+\setminus \{0\}$.
\end{itemize}
\item[\ref{intro}.d)] Let $\mathbb{K}=(K_n)_n \in {\mathcal K}_{\Xi}$, and  $\textbf{T}=(T_n)_n$ be a sequence of Urysohn-type operators defined by
\begin{align}\label{operatori}
	(T_n f)(s)=\int_G K_n(s,t,f(t)) \, d\mu(t), \quad s \in G,
\end{align}
where $f \in $ Dom $\textbf{T}=$ $\displaystyle{\bigcap_{n=1}^{\infty}}$ Dom $T_n$,   and 
for every $n \in \mathbb{N}$, Dom $T_n$ is the set on which $T_n f$ is well-defined.
\end{itemize}
\end{assumptions} 
Now we extend the concept of singularity  given in \cite{bm1} to the 
setting of the convergences introduced axiomatically in \ref{convergenze}.

\begin{definition}\label{intro2}
 \rm 
A family $\mathbb{K} \in {\mathcal K}_{\Xi}$ is said to be \emph{singular} iff
there are: two sequences $(L_n)_n$ and $(\psi_n)_n$, satisfying 
{\ref{intro}.b.6)},
%(\ref{lipschitz}),
an infinite set $H \subset \mathbb{N}$, 
and an element $D^{(1)} \in {\mathbb{R}}^+$, such that
\begin{itemize}
\item[{\ref{intro2}.a.1)}]
\begin{eqnarray*}\label{D11}
	\displaystyle{\int_G L_n(s,t) \, d\mu(t) \leq D^{(1)}} 
	\text{   for  every   } s \in G  \text{   and   } n \in H,
\end{eqnarray*}
\item[{\ref{intro2}.a.2)}]
\begin{eqnarray*}\label{D12}
	\displaystyle{\int_G L_n(s,t) \, d\mu(s) \leq D^{(1)}}
	\text{   for  any   } t \in G  \text{   and   } n \in H,
\end{eqnarray*}
\item[{\ref{intro2}.a.3)}] 
${\overline{\ell}}_{\mathbb{R}}((x_n)_{n\in \mathbb{N}})= {\overline{\ell}}_{\mathbb{R}}((x_n)_{n\in H})$ for every sequence $(x_n)_n$ in $\mathbb{R}^+_0$,
%\mg{$\leftarrow$ limsup usuale} NO
\item[{\ref{intro2}.a.4)}] 
${\overline{\ell}}((x_n)_{n\in \mathbb{N}})= {\overline{\ell}}((x_n)_{n\in H})$ for any sequence $(x_n)_n$ in ${\mathbf{X}}_1^+$. 
\end{itemize}
A singular family ${\mathcal K}$ is  \emph{$(M)$-singular} (resp.,  \emph{$(U)$-singular}) iff
\begin{itemize}
\item[\ref{intro2}.b.1)]
\begin{eqnarray*}\label{prejensen}
	\int_G L_n(s,t) \, d\mu(t) >0 \quad \text{for   any  } s \in G \text{    and   } n \in  H;
\end{eqnarray*} 
\item[\ref{intro2}.b.2)]
for each $A\in {\mathcal A}$ with $\mu(A) < + \infty$  there is a sequence $(A_n)_n$ in ${\mathcal A}$ with \mbox{$  \ell_2((\mu(A_n))_n) =0$} and such that
%\mg{su IR}
\begin{eqnarray*}\label{see}
	\displaystyle{ {\ell}_{\mathbb{R}} \Bigl( 
\sup_{s \in A \setminus A_n} \int_{G \setminus B(s,\delta)} L_n(s,t) \, d\mu(t)  }\Bigr)=0
\end{eqnarray*}
(resp., for every $A \in {\mathcal A}$ with $\mu(A) < + \infty$, it is 
\begin{eqnarray*}\label{check}
	\displaystyle{ {\ell}_{\mathbb{R}} \Bigl( 
\sup_{s \in A} \int_{G \setminus B(s,\delta)}  L_n(s,t) \, d\mu(t)  }\Bigr)=0   )
\end{eqnarray*}
for each $\delta \in  \mathbb{R}^+ $, where $B(s,\delta)=\{t \in G$: $d(s,t)\leq \delta\}$;
\item[\ref{intro2}.b.3)]  
for any $A \in {\mathcal A}$ with $\mu(A) < +\infty$ there are a sequence  $(D_n)_n$ in ${\mathcal A}$ with
\mbox{$  \ell_2((\mu(D_n))_n)=0$},  
an element $ z \in {\mathbf{X}}^+ \setminus \{0\}$ and an 
$(o)$-sequence $(\varepsilon_n)_n$ in $
\mathbb{R}^+ $, such that
\begin{eqnarray*}\label{Thetan}
\displaystyle{  
%\Theta_n(s)=
\bigvee_{u \in \mathbf{X}_1 \setminus \{0\}
} \Bigl( \bigvee_{s \in A \setminus   D_n} 
\Bigl\vert \int_{G} K_n(s,t,u) \, d\mu(t) - u
\Bigr\vert  \Bigr)   \leq_X \varepsilon_n \, z}
\end{eqnarray*}
for all $n \in H$
(resp.,  
for every $A \in {\mathcal A}$ with $\mu(A) < +\infty$
there exist $ z \in {\mathbf{X}}^+ \setminus \{0\}$
and an $(o)$-sequence $(\varepsilon_n)_n$ in ${\mathbb{R}}^+$, with
\begin{eqnarray*}\label{Thetanbis}
\displaystyle{  
%\Theta_n(s)=
\bigvee_{u \in \mathbf{X}_1 \setminus \{0\}
} \Bigl( \bigvee_{s \in A}
\Bigl\vert \int_G K_n(s,t,u) \, d\mu(t) - u
\Bigr\vert  \Bigr)  \leq_X \varepsilon_n \, z}
\end{eqnarray*}
for each $n \in H$).

\end{itemize}
\end{definition}
\begin{remark}\label{crucial}
\phantom{a}
\begin{itemize}
\item[ \ref{crucial}.a)] 
Note that, if $H=\mathbb{N}$,  condition \ref{intro2}.a.1) (resp.,
 \ref{intro2}.a.2) ) is equivalent to the order equiboundedness of the integrals
\[ \int_G L_n(s,t) \, d\mu(t) \, \text{ (resp.,  }\, \int_G L_n(s,t) \, d\mu(s)\, ), \, n \in \mathbb{N}, \, s \in G \, \text{ (resp., } t \in G). \]
\item[\ref{crucial}.b)]
 As an example, let ${\mathcal F}$ be any fixed free filter on ${\mathbb{N}}$.
A sequence  $(x_n)_n$ in $\mathbb{R}$ is said to be  (\emph{order}) \emph{${\mathcal F}$-bounded}
iff there are $M_0 \in {\mathbb{R}}$ and $H \in {\mathcal F}$ with
\begin{eqnarray}\label{fbdd}
 \vert x_n \vert \leq M_0 \text{    for  all    }n \in H
\end{eqnarray}
(see, e.g., \cite{BDMEDITERRANEAN}).
It is not difficult to see that, if the sequences 
\[ \sup_{s \in G} \, \int_G L_n(s,t) \, d\mu(t), \, \, \,
\sup_{t \in G} \, \int_G L_n(s,t) \, d\mu(s), \, \, \, n \in \mathbb{N},\]
are ${\mathcal F}$-bounded and $H$ is as in (\ref{fbdd}), then  $H$ satisfies the conditions in  \ref{intro2}.a.j),  j=1,2, 
with respect to the ${\mathcal F}$-limit superior, Now we
prove \ref{intro2}.a.3). To this aim,
it is enough to see that, for any sequence $(x_n)_n$ in 
${\mathbb{R}}$, 
$
%\begin{eqnarray}\label{FH}
\displaystyle{
\inf_{F \in \mathcal{F}} (\sup_{n \in F} x_n)=
\inf_{F \in \mathcal{F}} (\sup_{n \in F\cap H} x_n)
}.
$
%\end{eqnarray}
Indeed, since $F \cap H \in {\mathcal F}$ whenever
$F$, $H \in {\mathcal F}$,
the inequality $\leq$ 
%in (\ref{FH}) 
follows from
the properties of the infimum. Moreover, as
$\displaystyle{\sup_{n \in F} x_n \geq
\sup_{n \in F\cap H} x_n}$ for each $F \in {\mathcal F}$,
taking the infimum as $F$ varies in ${\mathcal F}$ we obtain 
the converse inequality. 
\\
Analogously as above, it is possible to check that condition \ref{intro2}.a.4) is fulfilled  
if we endow ${\mathbf{X}}_1$ with
$(o{\mathcal F})$-convergence. Moreover, when
we deal with usual, almost and Ces\`{a}ro (order)
convergence, we will take $H=\mathbb{N}$ in the 
definition of   $(M)$- and $(U)$-singularity.
\end{itemize}
\end{remark}

\section{The main results}\label{argument}
In this section we present general modular  con\-ver\-gen\-ce results for the involved operators,
in connection  with  
the convergences introduced  
 in Axioms \ref{convergenze}
%{\mg{\tiny senno' 1 da solo cosa e'?}}
extending \cite[Theorem 3]{bm1}.
We begin with the following result on equiabsolute continuity.

\begin{theorem}\label{principale}
Under Assumptions \rm \ref{intro}, \em assume that ${\mathbf{X}}_1={\mathbf{X}}$, 
$\mathbb{K}$ is singular and that the following condition
is satisfied:
\begin{itemize}\label{starproperty}
\item[{\rm \ref{principale}.1)}] 
for every compact set  $C \subset G$ there exists an increasing sequence $(B_m)_m$  of %compact 
subsets of $G$ 
 having finite measure $\mu$ and with
%{\mg{\tiny la compattezza serve? O serve solo di misura finita?
%Ad ogni modo noi abbiamo supposto che $\mu$ 
%sia regolare, che secondo alcuni testi implica
%automaticamente che sia finita su ogni compatto...
%Ma riguardando la dim., cosa serve? Che la successione 
%$(B_m)_m$ invada $G$? Ricontrollare a voce. Tra l'altro
%la finitezza della misura sembrerebbe essere più consone 
%allo spirito del lavoro del premio award} }
\begin{eqnarray}\label{propertystar} 
	\ell_{\mathbb{R}} \Bigl( \Bigl( \bar{\ell}_{\mathbb{R}}  \Bigl( \Bigl( \sup_{t \in C} \int_{G \setminus B_m}  L_n(s,t) \, d\mu(s) \Bigr)_n \Bigr) \Bigr)_m \Bigr)=0.
\end{eqnarray}
\end{itemize} 
Then there is a constant $\lambda \in \mathbb{R}^+$ such that, for every  $f \in {\mathcal C}_c(G)$, the sequence 
$\varphi(\lambda\vert   T_n f  \vert )$, $n \in H$,  has 
 $\mu$-equiabsolutely continuous integrals. 
\end{theorem}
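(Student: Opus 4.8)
The plan is to fix $f\in{\mathcal C}_c(G)$, choose a compact set $C\in{\mathcal A}$ (so $\mu(C)<+\infty$ by regularity) with $f(g)=0$ for $g\in G\setminus C$, and then verify the two conditions of Definition \ref{eac} for the sequence $(\varphi(\lambda\,\vert T_nf\vert))_{n\in H}$ with a universal constant $\lambda$. First I would collect the elementary facts that will be used: since $f$ is uniformly continuous it is bounded, say $\vert f(g)\vert\leq_{X_1}M$ with $M\in{\mathbf{X}}_1^{+}$ (Proposition \ref{boundedness}); by the order equiboundedness \ref{intro}.b.3) there is $A_M\in{\mathbf{X}}_1^{+}\setminus\{0\}$ with $\psi_n(M)\leq_{X_1}A_M$ for all $n$; and from the singularity of $\mathbb{K}$ I would fix $D^{(1)}$, $H$, $(L_n)_n$, $(\psi_n)_n$ as in Definition \ref{intro2}, assuming without loss of generality $D^{(1)}\geq 1$. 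The candidate constant is $\lambda:=1/D^{(1)}$, which does not depend on $f$.

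The core of the proof is a single pointwise estimate: for all $s\in G$ and $n\in H$,
\[ \varphi\bigl(\lambda\,\vert (T_nf)(s)\vert\bigr)\leq_{X_1}\frac{\varphi(A_M)}{D^{(1)}}\int_C L_n(s,t)\,d\mu(t). \]
To obtain it I would first use \ref{intro}.b.5), \ref{intro}.b.6), the vanishing of $f$ off $C$ and the monotonicity of $\psi_n$ to get $\vert(T_nf)(s)\vert\leq_{X_1}\int_C L_n(s,t)\,\psi_n(\vert f(t)\vert)\,d\mu(t)\leq_{X_1}A_M\int_C L_n(s,t)\,d\mu(t)$. Writing $r_n(s):=\int_C L_n(s,t)\,d\mu(t)$, condition \ref{intro2}.a.1) forces $r_n(s)\in[0,D^{(1)}]$, hence $\lambda\,\vert(T_nf)(s)\vert\leq_{X_1}\bigl(r_n(s)/D^{(1)}\bigr)A_M$ with $r_n(s)/D^{(1)}\in[0,1]$; since $\varphi$ is increasing on ${\mathbf{X}}_1^{+}$ and, by Proposition \ref{beginning} ($\varphi$ convex, $\varphi(0)=0$), $\varphi(\xi A_M)\leq_{X_1}\xi\,\varphi(A_M)$ for $\xi\in[0,1]$, the estimate follows. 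The same inequality also shows that $\varphi(\lambda\vert T_nf\vert)$ is integrable: $s\mapsto r_n(s)$ is nonnegative, real-valued and $\mu$-integrable on $G$ (by Tonelli — legitimate for the real-valued $L_n$, by Proposition \ref{lebesgue} and $\sigma$-finiteness — together with \ref{intro2}.a.2), $\int_G r_n\leq D^{(1)}\mu(C)$), so $\frac{\varphi(A_M)}{D^{(1)}}\,r_n(\cdot)$ is an integrable ${\mathbf{X}}$-valued majorant (Remark \ref{pf}.b)), giving $0\leq_X\int_A\varphi(\lambda\vert(T_nf)(g)\vert)\,d\mu(g)\leq_X\frac{\varphi(A_M)}{D^{(1)}}\int_A r_n(g)\,d\mu(g)$ for every $A\in{\mathcal A}$.

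With this in hand, condition \ref{eac}.1) is immediate: if $\ell_2((\mu(A_n))_n)=0$, then using $r_n\leq D^{(1)}$ the above yields $0\leq_X\int_{A_n}\varphi(\lambda\vert T_nf\vert)\,d\mu\leq_X\varphi(A_M)\,\mu(A_n)$, and since ${\mathbf{X}}_2=\mathbb{R}$ the compatibility conditions \ref{ass}.7)--\ref{ass}.8) for the triple $(\mathbb{R},{\mathbf{X}},{\mathbf{X}})$ give $\ell((\varphi(A_M)\mu(A_n))_n)=0$, whence $\ell((\int_{A_n}\varphi(\lambda\vert T_nf\vert)\,d\mu)_n)=0$ by the squeeze Axiom \ref{convergenze}.e) (with Axiom \ref{convergenze}.c)). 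For \ref{eac}.2) I would feed the compact set $C$ into hypothesis \ref{principale}.1), obtaining an increasing $(B_m)_m$ with $\mu(B_m)<+\infty$ and $\ell_{\mathbb{R}}((\bar\ell_{\mathbb{R}}((\sigma_{m,n})_n))_m)=0$, where $\sigma_{m,n}:=\sup_{t\in C}\int_{G\setminus B_m}L_n(s,t)\,d\mu(s)$; integrating the pointwise estimate over $G\setminus B_m$, applying Tonelli and monotonicity of the integral (Remark \ref{pf}.f)) together with Remark \ref{pf}.b), I get $0\leq_X\int_{G\setminus B_m}\varphi(\lambda\vert T_nf\vert)\,d\mu\leq_X\frac{\varphi(A_M)\mu(C)}{D^{(1)}}\,\sigma_{m,n}$. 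Then I would apply $\bar\ell$ over $n\in H$ and $\ell$ over $m$, transferring the vanishing from $\ell_{\mathbb{R}}/\bar\ell_{\mathbb{R}}$ to $\ell/\bar\ell$ via the compatibility conditions \ref{ass}.9)--\ref{ass}.10) (to pull the scalar $\frac{\varphi(A_M)\mu(C)}{D^{(1)}}\in{\mathbf{X}}^{+}$ out of $\bar\ell$ and later annihilate it by $\ell$) and conditions \ref{intro2}.a.3)--\ref{intro2}.a.4) (to handle the restriction of the index set to $H$), arriving at $\ell((\bar\ell((\int_{G\setminus B_m}\varphi(\lambda\vert T_nf\vert)\,d\mu)_n))_m)=0$, which is \ref{eac}.2). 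Since $\lambda$ was chosen independently of $f$, this completes the argument. I expect the main obstacle to be not any single inequality but this last bookkeeping step — correctly threading the iterated ``limit of a limit superior'' of \ref{principale}.1) through the product structure and the compatibility axioms; the pointwise estimate via Proposition \ref{beginning} is the only genuinely creative ingredient.
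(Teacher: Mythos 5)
Your proof is correct, and its overall skeleton coincides with the paper's: the same choice $\lambda D^{(1)}\leq 1$, the same reduction to a pointwise bound of the form $\varphi(\lambda\vert (T_nf)(s)\vert)\leq_X (\mathrm{const})\cdot\int_C L_n(s,t)\,d\mu(t)$, the same Fubini--Tonelli step producing $\sup_{t\in C}\int_{G\setminus B_m}L_n(s,t)\,d\mu(s)$, and the same bookkeeping through \ref{intro2}.a.3)--a.4) and the compatibility conditions \ref{ass}.7), \ref{ass}.9). The one genuine divergence is in the central estimate: the paper obtains it by applying the Jensen inequality (Corollary \ref{jensencoroll}) with $h=L_n(s,\cdot)/D^{(1)}$, pushing $\varphi$ inside the integral and only then majorizing $\varphi(\lambda D^{(1)}\psi_n(\vert f(t)\vert))$ by $\Theta^*=\varphi(\bigvee_n\psi_n(u^*))$; you instead majorize first, using $\psi_n(\vert f(t)\vert)\leq_{X_1}A_M$ from \ref{intro}.b.3), so that $\lambda\vert(T_nf)(s)\vert\leq_{X_1}\xi A_M$ with $\xi=r_n(s)/D^{(1)}\in[0,1]$, and then invoke only monotonicity of $\varphi$ and Proposition \ref{beginning}. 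Your route is more elementary --- it bypasses Jensen entirely, hence also the integrability verifications that Corollary \ref{jensencoroll} requires --- at the price of a cruder constant ($\varphi(A_M)$ in place of $\Theta^*$), which is immaterial here since only equiabsolute continuity is at stake; the Jensen machinery becomes genuinely necessary only later, in the convergence estimates of Theorem \ref{theoremeleven}. Your explicit remark that the dominating function $\frac{\varphi(A_M)}{D^{(1)}}r_n(\cdot)$ guarantees integrability of $s\mapsto\varphi(\lambda\vert(T_nf)(s)\vert)$ is a point the paper leaves implicit.
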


\begin{proof}
Let $H$ and $D^{(1)}$ be as in \ref{intro2}.a.1) and \ref{intro2}.a.2), $\lambda \in \mathbb{R}^+$ be such that 
$\lambda \, D^{(1)} \leq 1$, $f \in {\mathcal C}_c(G)$, $C$ be a compact set such that $f(s)=0$ for all $s \in G \setminus C$, 
$\displaystyle{u^*=\bigvee_{s \in G} \vert f(s) \vert}$, and $(B_m)_m$ be a sequence from ${\mathcal A}$, according to  (\ref{propertystar}).

 Set $\Theta^*= \displaystyle{\varphi\Bigl(\bigvee_{n \in \mathbb{N}} \psi_n (u^*)\Bigr)}$.
Note that $\Theta^* \in \mathbf{X}$, thanks to the order equiboundedness of the sequence $((\psi_n(u^*))_n)$ (see \ref{intro}.b.3)).
Pick arbitrarily $s \in G$ and  $n\in H$. 
Taking into account the monotonicity of $\varphi$ and using Corollary \ref{jensencoroll}, where the roles of $f$ and $h$  are played by the functions 
%\begin{eqnarray*}\label{jensenschumann}
$\lambda \, D^{(1)} \, \psi_n(\vert f(\cdot)\vert )$, 
$\dfrac{L_n(s,\cdot)}{D^{(1)}},$
%\end{eqnarray*}
respectively, 
%(with $n$ and $s$ fixed), 
we have
\begin{eqnarray}\label{jensen}
& &\varphi \Bigl(\lambda \int_G \vert K_n(s,t,f(t)) \vert \, d\mu(t) \Bigr)  \leq_{X} \nonumber
\varphi \Bigl(\lambda \int_G L_n(s,t) \, \psi_n(\vert f(t) \vert) \,d\mu(t) \Bigr)  \leq_{X}
\\ &\leq_{X}&  \varphi \Bigl(\lambda\, D^{(1)} \int_C \dfrac{L_n(s,t)}{D^{(1)}} \, 
 \psi_n(\vert f(t) \vert) \, d\mu(t) \Bigr)
 \leq_{X}
\nonumber
\\& \leq_{X}& 
\frac{1}{D^{(1)}} \int_C  L_n(s,t) \, \varphi(\lambda \, D^{(1)}
\, \psi_n(\vert f(t) \vert) \,) \, \, d\mu(t)  \leq_{X} \\& \leq_{X}& 
\nonumber
\frac{1}{D^{(1)}} \int_C  L_n(s,t) \cdot \Bigl( \varphi \Bigl(\bigvee_{n \in \mathbb{N}}
\psi_n (u^*) \Bigr)  \Bigr) \, d\mu(t)  \leq_{X}
%\\&\leq& 
%\nonumber
\frac{1}{D^{(1)}}  \, \Bigl( \int_C  L_n(s,t) \, d\mu(t) \Bigr) \cdot \Theta^* .
\end{eqnarray} 
By applying the Fubini-Tonelli theorem to $L_n$ and integrating
with respect to $\mu(s)$ on \mbox{$G \setminus B_m$,} $m \in \mathbb{N}$,
we  get, for each $m \in \mathbb{N}$ and 
$n \in H$, 
\begin{eqnarray}\label{tonelli}
\int_{G \setminus B_m} \Bigl(\int_C L_n(s,t) \, d\mu(t) \Bigr) \, d\mu(s) =
\int_C \Bigl(\int_{G \setminus B_m} L_n(s,t) \, d\mu(s) \Bigr) \, d\mu(t),
\end{eqnarray} and hence
\begin{eqnarray}\label{thetatheta}
& & \int_{G \setminus B_m} \Bigl(\dfrac{1}{D^{(1)}} \nonumber
\int_C L_n(s,t) \, d\mu(t) \Bigr) \cdot \Theta^*\, \Bigr) d\mu(s) =
\\
 &=& \dfrac{1}{D^{(1)}}
\Bigl(\int_C \Bigl(\int_{G \setminus B_m} L_n(s,t) \, d\mu(s) \Bigr) \, d\mu(t) \Bigr) \cdot
\Theta^*   \leq_{X} \\ & \leq_{X}& \dfrac{1}{D^{(1)}} \nonumber
\Bigl(\int_C \Bigl( \sup_{t \in C}
\int_{G \setminus B_m} L_n(s,t) \, d\mu(s) \Bigr) \, d\mu(t) \Bigr)
\cdot \Theta^*  \leq_{X} \dfrac{1}{D^{(1)}} \,
\iota_{m,n} \, \mu(C)\, \cdot \Theta^* ,
\end{eqnarray} 
where
%\begin{eqnarray*}\label{iotamn}
$\displaystyle{
\iota_{m,n}= \sup_{t \in C}
\int_{G \setminus B_m} L_n(s,t) \, d\mu(s)}.$
From (\ref{operatori}), (\ref{jensen}), (\ref{tonelli}) and (\ref{thetatheta}) we obtain
\begin{eqnarray}\label{fin}
0 & \leq_{X}&\int_{G\setminus B_m} \varphi(\lambda \vert T_n(f)(s) \vert   )\,
d\mu(s)  \leq_{X} \\ & \leq_{X}&
\int_{G \setminus B_m}
\varphi \Bigl(\lambda \int_G \vert K_n(s,t,f(t)) \vert \, d\mu(t) \Bigr) 
\, d\mu(s) \nonumber  \leq_{X}
\dfrac{\iota_{m,n} \, \mu(C)}{D^{(1)}}  \, \Theta^* .
\end{eqnarray}
By \ref{principale}.1), we know that 
$\ell_{\mathbb{R}} ( (
\bar{\ell}_{\mathbb{R}} ( ( \iota_{m,n})_{n\in\mathbb{N}}
) )_{m\in\mathbb{N}} )=0$. From this and
\ref{intro2}.a.3) it follows that
$\ell_{\mathbb{R}} ( (
\bar{\ell}_{\mathbb{R}} ( ( \iota_{m,n})_{n\in H}
) )_{m\in\mathbb{N}} )=0$.
Thus, taking into account that $\mu(C) < + \infty$, 
 from (\ref{fin}), Axioms \ref{convergenze},  \ref{limsuppresentation} and  conditions \ref{ass}.7),
\ref{ass}.9) we deduce 
\begin{eqnarray*}\label{I2}
	\ell \Bigl( \Bigl( \bar{\ell} \Bigl( \Bigl(  \int_{G\setminus B_m} \varphi(\lambda \vert T_n(f)(s) \vert   ) \, d\mu(s) \Bigr)_{n \in H} 
\Bigr) \Bigr)_{m \in \mathbb{N}}\Bigr)=0.
\end{eqnarray*}
From this and \ref{intro2}.a.4) we obtain
\begin{eqnarray*}\label{I2bis}
	\ell \Bigl( \Bigl( \bar{\ell} \Bigl( \Bigl(  \int_{G\setminus B_m} \varphi(\lambda \vert T_n(f)(s) \vert   ) \, d\mu(s) \Bigr)_{n \in \mathbb{N}} 
\Bigr) \Bigr)_{m \in \mathbb{N}} \Bigr)=0.
\end{eqnarray*}
Thus, \ref{eac}.2) follows.

Now we prove \ref{eac}.1). Let  $(A_n)_n$ be any
sequence in ${\mathcal A}$, such that  $ \ell_2((\mu(A_n))_n)=0$. 
By arguing analogously as in (\ref{jensen}) and (\ref{thetatheta}), we  obtain
\begin{eqnarray*}\label{dfin}
	0 \leq_{X}
\int_{A_n} \varphi(\lambda \vert T_n(f)(s) \vert   )\, d\mu(s)
	  \leq_{X} \mu(A_n) \, \mu(C)  \, \, \Theta^* .
\end{eqnarray*}
From 
%(\ref{dfin})
the previous inequality  and Axioms \ref{convergenze}  we get 
%\begin{eqnarray*}\label{dfinfin}
\[\displaystyle{\ell\Bigl( \Bigl(
\int_{A_n} \varphi(\lambda \vert T_n(f)(s) \vert   )\,
d\mu(s) \Bigr)_n \Bigr)=0},\]
%\end{eqnarray*}
that is \ref{eac}.1). This ends the proof.
\end{proof}
A consequence of Theorem \ref{principale} is the following

\begin{corollary}\label{principalebis}
Under the same hypotheses as in  Theorem \rm \ref{principale}, \em
if $\mathbb{K}$ is singular, then there is a constant
$\beta \in \mathbb{R}^+$ such that, for every 
$f \in {\mathcal C}_c(G)$, the sequence 
$\varphi(\beta\vert T_n f-f \vert)$, $n \in H$, 
has  $\mu$-equiabsolutely continuous integrals. 
\end{corollary}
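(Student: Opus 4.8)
The plan is to deduce Corollary \ref{principalebis} from Theorem \ref{principale} by a triangle-inequality-and-convexity argument, the extra term coming from $f$ itself being controlled by the $\mu$-absolute continuity of a single integral (Theorem \ref{ac}). Let $\lambda \in \mathbb{R}^+$ be the constant furnished by Theorem \ref{principale}, and set $\beta = \lambda/2$. Fix $f \in \mathcal{C}_c(G)$, and let $C \subset G$ be a compact set with $f(g)=0$ for all $g \in G \setminus C$; note $\mu(C) < +\infty$, since $\mu$ is regular. For every $n \in H$ and $s \in G$ the vector-lattice triangle inequality gives $\beta\,\vert T_n f(s)-f(s)\vert \leq_{X_1} \tfrac12\,\lambda\,\vert T_n f(s)\vert + \tfrac12\,\lambda\,\vert f(s)\vert$, and the right-hand side is the midpoint of $\lambda\,\vert T_n f(s)\vert$ and $\lambda\,\vert f(s)\vert$. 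Since $\varphi$ is convex, increasing on $\mathbf{X}_1^+$, with $\varphi(0)=0$, and takes nonnegative values on $\mathbf{X}_1^+$ (Assumptions \ref{intro}.c)), this yields
\[
0 \leq_{X} \varphi\bigl(\beta\,\vert T_n f(s)-f(s)\vert\bigr) \leq_{X} \tfrac12\,\varphi\bigl(\lambda\,\vert T_n f(s)\vert\bigr) + \tfrac12\,\varphi\bigl(\lambda\,\vert f(s)\vert\bigr) \leq_{X} \varphi\bigl(\lambda\,\vert T_n f(s)\vert\bigr) + \varphi\bigl(\lambda\,\vert f(s)\vert\bigr).
\]

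Next I would treat the two terms on the right separately. By Theorem \ref{principale}, the sequence $\bigl(\varphi(\lambda\,\vert T_n f\vert)\bigr)_{n \in H}$ has $\mu$-equiabsolutely continuous integrals; let $(B_m')_m$ be an associated increasing sequence of finite-measure sets witnessing \ref{eac}.2). For the other term, the map $g \mapsto \lambda\,\vert f(g)\vert$ is uniformly continuous on $G$ and vanishes outside $C$, so Proposition \ref{compositionmozarteum} shows that $\varphi(\lambda\,\vert f(\cdot)\vert)$ is uniformly continuous, integrable on $G$, and vanishes outside $C$; in particular, by Theorem \ref{ac}, its integral is $\mu$-absolutely continuous, so the constant sequence $\bigl(\varphi(\lambda\,\vert f\vert)\bigr)_{n\in H}$ has $\mu$-equiabsolutely continuous integrals, with any increasing sequence of finite-measure sets containing $C$ admissible for \ref{eac}.2). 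Put $B_m := B_m' \cup C$: this is increasing, of finite measure, and contains $C$, so $\int_{G\setminus B_m}\varphi(\lambda\,\vert f\vert)\,d\mu = 0$ while $\int_{G\setminus B_m}\varphi(\lambda\,\vert T_n f\vert)\,d\mu \leq_{X} \int_{G\setminus B_m'}\varphi(\lambda\,\vert T_n f\vert)\,d\mu$.

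Finally I would combine these. Integrating the pointwise bound over a set $A_n$ with $\ell_2((\mu(A_n))_n)=0$ and invoking linearity (Axiom \ref{convergenze}.a)) and the squeeze property (Axiom \ref{convergenze}.e)) gives \ref{eac}.1) for $\bigl(\varphi(\beta\,\vert T_n f-f\vert)\bigr)_{n\in H}$; integrating over $G\setminus B_m$ and using the monotonicity and subadditivity of $\overline{\ell}$ (Axioms \ref{limsuppresentation}.b), \ref{limsuppresentation}.c)), Axioms \ref{convergenze}.a)--\ref{convergenze}.b), and the two inequalities above, reduces condition \ref{eac}.2) to the corresponding statement for $\bigl(\varphi(\lambda\,\vert T_n f\vert)\bigr)_{n\in H}$ relative to $(B_m')_m$, which holds by Theorem \ref{principale}. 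Hence $\bigl(\varphi(\beta\,\vert T_n f-f\vert)\bigr)_{n\in H}$ has $\mu$-equiabsolutely continuous integrals, and the corollary follows with $\beta = \lambda/2$. I expect no genuinely hard step here — the substance is in Theorem \ref{principale} — the only care needed is in selecting a common sequence $(B_m)_m$ for \ref{eac}.2) and in recording that, for $f\in\mathcal{C}_c(G)$, $T_n f$ and hence $\varphi(\beta\,\vert T_n f-f\vert)$ are integrable on $G$, which is already part of the machinery used in proving Theorem \ref{principale}.
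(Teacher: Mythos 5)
Your proposal is correct and follows essentially the same route as the paper: the paper also sets $\beta=\lambda/2$, bounds $\varphi(\tfrac{\lambda}{2}\vert T_nf-f\vert)$ by $\varphi(\lambda\vert T_nf\vert)+\varphi(\lambda\vert f\vert)$ (via the modular property of $\rho^{\varphi}$, equivalent to your pointwise convexity step), and then invokes Theorem \ref{principale} for the first term and Proposition \ref{compositionmozarteum} together with Theorem \ref{ac} for the second. Your explicit choice of the common sequence $B_m=B_m'\cup C$ for condition \ref{eac}.2) is a detail the paper leaves implicit, but it is not a different argument.
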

\begin{proof}
Let $\lambda$ be as in Theorem \ref{principale}, and choose
arbitrarily $f \in {\mathcal C}_c(G)$.
%By Remark \ref{pf} (a), $\vert f \vert $ is integrable too, and hence 
Note that $\lambda \, \vert f \vert  \in {\mathcal C}_c(G)$. By Proposition \ref{compositionmozarteum}, 
the function $s \mapsto \varphi(\lambda \, \vert f(s) \vert )$ is integrable on $G$. By Theorem \ref{ac}, the integral
\begin{eqnarray}\label{schumann3}
\int_{(\cdot)}  \varphi(\lambda \, \vert f(s) \vert ) \, d\mu(s)
\end{eqnarray}
is absolutely continuous. Moreover, since $\rho^{\varphi}$ is a modular, we get
\begin{eqnarray*}\label{modularartistic} \nonumber
\int_A  \varphi\Bigl(\dfrac{\lambda}{2} \, \vert T_n f (s) - f(s) \vert \Bigr) \, d\mu(s)  \leq_{X}
\int_A  \varphi(\lambda \, \vert T_n f(s)\vert  ) \, d\mu(s) + 
\int_A  \varphi(\lambda \, \vert f(s)\vert  ) \, d\mu(s)
\end{eqnarray*}
for every $A \in {\mathcal A}$. The assertion follows from %(\ref{modularartistic}),
the absolute continuity of the integral in (\ref{schumann3})
 and Theorem \ref{principale}, taking $\beta=%\dfrac{\lambda}{2}$.
\lambda/2$.
\end{proof}
The following result on convergence in measure (resp., uniform
 convergence)
extends \cite[Theorem 2]{bm1} to our setting.
\begin{theorem}\label{theoremeleven}
Under Assumptions \rm \ref{intro}, if ${\mathbf{X}}_1={\mathbf{X}}$ and $\mathbb{K} \in {\mathcal K}_{\Xi}$ is 
$(M)$-  \rm( \em resp., $(U)$-\rm) \em singular, then
there exists $\alpha \in \mathbb{R}^+$
such that, for every 
%uniformly continuous function
%$f \in {{\mathbf{X}}_1}^G  $ with compact support, 
$f \in {\mathcal C}_c(G)$,
the sequence 
\begin{eqnarray*}\label{varphi}
\varphi(\alpha \vert T_n f - f \vert ), \, \, n \in \mathbb{N},
\end{eqnarray*}
converges to $0$ in measure \rm(\em resp., uniformly\rm) \em
on every set $A \in {\mathcal A}$ with $\mu(A) < + \infty$.

Moreover, the sequence 
$(T_n f)_n$ converges to $f$
in measure \rm(\em resp., uniformly\rm) \em
on each set $A \in {\mathcal A}$ such that
$\mu(A) < + \infty$.
\end{theorem}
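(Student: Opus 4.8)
The plan is to derive one pointwise estimate for $|T_nf(s)-f(s)|$, uniform in $s$ over $A$ minus a $\mu$-small set, and then to push it through the abstract limit superior $\overline\ell_1$. First I would fix $f\in\mathcal C_c(G)$ with compact support $C$, observe that $f\in\mathrm{Dom}\,\mathbf T$ and that $f$ is bounded (Proposition \ref{boundedness}), set $u^*=\bigvee_{s\in G}|f(s)|\in\mathbf X_1^+$, let $u\in\mathbf X_1^+$ realise the uniform continuity of $f$, and fix $A\in\mathcal A$ with $\mu(A)<+\infty$. In the $(M)$-singular case I would invoke \ref{intro2}.b.2) and \ref{intro2}.b.3) to obtain $(A_n)_n$, $(D_n)_n$ in $\mathcal A$ with $\ell_2((\mu(A_n))_n)=\ell_2((\mu(D_n))_n)=0$, an element $z\in\mathbf X^+\setminus\{0\}$ and an $(o)$-sequence $(\varepsilon_n)_n$ in $\mathbb R^+$, and put $\widetilde A_n=A_n\cup D_n$ (so $\ell_2((\mu(\widetilde A_n))_n)=0$); in the $(U)$-singular case I would take $\widetilde A_n=\emptyset$ and use the ``resp.'' variants.

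For $s\in A\setminus\widetilde A_n$ and $n\in H$ I would split
\[ |T_nf(s)-f(s)|\le\int_G|K_n(s,t,f(t))-K_n(s,t,f(s))|\,d\mu(t)+\Bigl|\int_G K_n(s,t,f(s))\,d\mu(t)-f(s)\Bigr|. \]
The second term is $\le\varepsilon_n z$ by \ref{intro2}.b.3) (with $u=f(s)$, using \ref{intro}.b.5) when $f(s)=0$). For the first term I would use the Lipschitz-type bound \ref{intro}.b.6) and split $G=B(s,\delta)\cup(G\setminus B(s,\delta))$, choosing $\delta$ so that the uniform continuity of $f$ makes $|f(t)-f(s)|$ small on $B(s,\delta)$; then the equicontinuity at $0$ of $(\psi_n)_n$ (condition \ref{intro}.b.2)) gives $\psi_n(|f(t)-f(s)|)\le\varepsilon'\,w$ there, uniformly in $n$, and since $\int_{B(s,\delta)}L_n(s,t)\,d\mu(t)\le\int_G L_n(s,t)\,d\mu(t)\le D^{(1)}$ by \ref{intro2}.a.1) the near part is $\le\varepsilon'\,w\,D^{(1)}$. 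On $G\setminus B(s,\delta)$ I would use $|f(t)-f(s)|\le 2u^*$ and the monotonicity of $\psi_n$ to get $\psi_n(|f(t)-f(s)|)\le\psi_n(2u^*)\le A_{2u^*}$ (order equiboundedness \ref{intro}.b.3)), so that part is $\le A_{2u^*}\,\gamma_n(\delta)$ with $\gamma_n(\delta):=\sup_{s\in A\setminus A_n}\int_{G\setminus B(s,\delta)}L_n(s,t)\,d\mu(t)$. Since the resulting bound is independent of $s$, taking $\bigvee_{s\in A\setminus\widetilde A_n}$ gives, for $n\in H$,
\[ c_n:=\bigvee_{s\in A\setminus\widetilde A_n}|T_nf(s)-f(s)|\le\varepsilon'\,w\,D^{(1)}+A_{2u^*}\,\gamma_n(\delta)+\varepsilon_n z. \]

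Next I would apply $\overline\ell_1$ over $n\in H$ and use its subadditivity and monotonicity (Axioms \ref{limsuppresentation}): the term $A_{2u^*}\gamma_n(\delta)$ vanishes since $\ell_{\mathbb R}((\gamma_n(\delta))_n)=0$ by \ref{intro2}.b.2) together with the compatibility conditions \ref{ass}.9)--\ref{ass}.10) for $(\mathbb R,\mathbf X_1,\mathbf X_1)$; the term $\varepsilon_n z$ vanishes since $(\varepsilon_n)_n$ is an $(o)$-sequence; and the constant term survives, so $\overline\ell_1((c_n)_{n\in H})\le\varepsilon'\,w\,D^{(1)}$ for every $\varepsilon'\in\mathbb R^+$, whence $\overline\ell_1((c_n)_{n\in H})=0$. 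Moving from $H$ to $\mathbb N$ via \ref{intro2}.a.3)--\ref{intro2}.a.4) and then using Axiom \ref{limsuppresentation}.e) gives $\ell_1((c_n)_{n\in\mathbb N})=0$, i.e.\ $(T_nf)_n$ converges in $\mu$-measure (resp.\ uniformly) to $f$ on $A$. For the statement on $\varphi(\alpha|T_nf-f|)$, I would note that $|T_nf-f|$ is order bounded (from \ref{intro}.b.3)--\ref{intro}.b.6) and \ref{intro2}.a.1), $|T_nf(s)|\le A_{u^*}D^{(1)}$ for $n\in H$, together with boundedness of $f$), so Proposition \ref{locallipschitzconvex} yields $\beta^*\in\mathbf X_1^+$ with $\varphi(\alpha|T_nf(s)-f(s)|)\le\beta^*\,\alpha\,|T_nf(s)-f(s)|$ for a fixed $\alpha\in\mathbb R^+$; combining with the above and the compatibility of the product with $\ell_1$ and $\ell$ gives $\ell\bigl(\bigl(\bigvee_{s\in A\setminus\widetilde A_n}\varphi(\alpha|T_nf(s)-f(s)|)\bigr)_n\bigr)=0$. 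The $(U)$-singular case runs verbatim with $\widetilde A_n=\emptyset$ and the ``resp.'' hypotheses.

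The step I expect to be the main obstacle is the passage to the limit in the near-diagonal term: one has to make sure that $\overline\ell_1$ of the constant family $\varepsilon'\,w\,D^{(1)}$ collapses to $0$ as $\varepsilon'\to0^+$, that is, that the element $w=w(\varepsilon')$ furnished by equicontinuity at $0$ stays controlled enough for $\bigwedge_{\varepsilon'\in\mathbb R^+}\varepsilon'\,w(\varepsilon')\,D^{(1)}=0$ to hold in the Archimedean lattice $\mathbf X_1$; the rest is careful bookkeeping, ensuring that the exceptional sets $A_n$, $D_n$ really absorb the far-diagonal and identity errors and that the restriction to $n\in H$ is legitimate through \ref{intro2}.a.3)--\ref{intro2}.a.4).
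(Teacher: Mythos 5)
Your decomposition of $|T_nf(s)-f(s)|$ (identity error controlled by $\varepsilon_n z$ via \ref{intro2}.b.3), near-diagonal part via the equicontinuity of $(\psi_n)_n$ and \ref{intro2}.a.1), far-diagonal part via \ref{intro2}.b.2), exceptional sets $E_n=A_n\cup D_n$, transfer from $H$ to $\mathbb{N}$ through \ref{intro2}.a.3)--\ref{intro2}.a.4)) is exactly the paper's. But there is a genuine gap at the point you yourself flag, and it is not a removable technicality: from $\overline{\ell}_1((c_n)_{n\in H})\leq_{X_1}\varepsilon'\,w(\varepsilon')\,D^{(1)}$ for every $\varepsilon'$ you cannot conclude $\overline{\ell}_1((c_n)_{n\in H})=0$, because the element $w$ furnished by \ref{intro}.b.2) depends on $\varepsilon'$, and $\bigwedge_{\varepsilon'}\varepsilon'\,w(\varepsilon')$ need not vanish in a Dedekind complete vector lattice (in $\mathbf{X}_1=\mathbb{R}^{\mathbb{N}}$ take $w(1/k)$ to be the constant sequence $k$, so that $\tfrac1k w(1/k)=\mathbf{1}$ for every $k$). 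Your argument stops exactly where the real work begins.

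The paper closes this step with machinery your proposal never invokes. First, it estimates $\varphi(\alpha|T_nf(s)-f(s)|)$ directly rather than $|T_nf(s)-f(s)|$: convexity of $\varphi$ splits the sum into halves, Corollary \ref{jensencoroll} pushes $\varphi$ inside the integral against the density $L_n(s,\cdot)/D^{(1)}$, and Proposition \ref{beginning} converts $\varphi(\varepsilon w)$ into $\varepsilon\,\varphi(w)$, so that the final bound has the form $\bigl(\varepsilon+\varepsilon_n+\sup_{s\in A\setminus E_n}\int_{G\setminus B(s,\delta)}L_n(s,t)\,d\mu(t)\bigr)\cdot v^*$ with a \emph{single} element $v^*=\varphi(w)+\varphi(z)+\varphi\bigl(4\alpha D^{(1)}\bigvee_n\psi_n(2u^*)\bigr)$, i.e.\ a real scalar times one fixed lattice element. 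Second, it passes to the principal ideal $V[v^*]$ and its norm $\|\cdot\|_{v^*}$ from (\ref{latticenorm}), where by (\ref{triple}) the scalar limit superior of the norms is $\leq\varepsilon$, hence $0$ by arbitrariness of $\varepsilon$ in $\mathbb{R}$; the standing hypothesis \textbf{H*)} then converts this scalar statement back into $\ell_1$-convergence in $\mathbf{X}_1$. Your alternative treatment of the $\varphi$-part via Proposition \ref{locallipschitzconvex} is plausible in itself (note only that the order bound $|T_nf(s)|\leq_{X_1}A_{u^*}D^{(1)}$ is available just for $n\in H$), but it inherits the same unresolved limit passage. In short: same skeleton, but the decisive reduction to a real-valued limsup via $v^*$, $\|\cdot\|_{v^*}$ and \textbf{H*)} is missing, and without it the argument does not go through.
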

\begin{proof}
First, we prove 
the results concerning convergence in measure. 

Let $D^{(1)}$ be as in \ref{intro2}.1) and \ref{intro2}.2), 
 $\alpha \in  \mathbb{R}^+$ be such that  $2\, \alpha \leq 1$ and $4 \, \alpha \, D^{(1)} \leq 1$,
and $u \in {\mathbf{X}}^+ \setminus \{0\}$ be related to the uniform continuity of $f$.  
Fix arbitrarily
$\varepsilon \in \mathbb{R}^+ $.
Without loss of generality, we can suppose $\varepsilon \leq 1$. 
Since the sequence $(\psi_n)_n$ is equicontinuous at $0$
(see \ref{intro}.b.2)),
then there exist $w \in {\mathbf{X}}^+ \setminus \{0\}$ and $\sigma \in \mathbb{R}^+$
with \begin{eqnarray}\label{cric}
\psi_n(\vert x_1-x_2 \vert)  \leq_{X} \varepsilon \, w \text{   whenever   }
\vert x_1-x_2\vert  \leq_{X}
\sigma \, u \text{   and   } n\in \mathbb{N}.
\end{eqnarray} 
By the uniform continuity of $f$, in correspondence 
with $\sigma$ there is  
$\delta \in \mathbb{R}^+$ such that 
\begin{eqnarray}\label{croc}
\vert f(t)-f(s)\vert  \leq_{X} \sigma \,u
\text{   whenever   } t,s \in G, \, \, d(t,s)\leq \delta. 
\end{eqnarray}
Since $\varphi$ is convex, increasing on ${\mathbf{X}}^+ \setminus \{0\}$ and
$\varphi(0)=0$, 
from (\ref{cric}), (\ref{croc}) and
Proposition \ref{beginning} it follows that
\begin{eqnarray}\label{psinpsin}
\varphi(\psi_n(\vert f(t)-f(s) \vert))  \leq_{X} \varphi(\varepsilon \, w) 
 \leq_{X}
\varepsilon \, \varphi(w)
\end{eqnarray}
 whenever $ t,s \in G$, $d(t,s)\leq \delta$
and $n\in \mathbb{N}$.
Fix $A \in {\mathcal A}$ with $\mu(A) < + \infty$. 
Let $z \in {\mathbf{X}}^+ \setminus \{0\}$,
%eliminati gli ultimi vecchi red
$(D_n)_n$
be a sequence in $\mathcal{A}$ 
 and $(\varepsilon_n)_n$ be
an $(o)$-sequence in $\mathbb{R}^+$,
related to
\ref{intro2}.b.3). If $\varepsilon_1 >1$, we 
can replace $z$ with $\varepsilon_1 \, z$ 
and $(\varepsilon_n)_n$ with $(\varepsilon_n/\varepsilon_1)_n$
in \ref{intro2}.b.3, so that 
without loss of generality we can assume that $\varepsilon_1\leq 1$. Let $(A_n)_n$ be 
a sequence in $\mathcal{A}$
associated with \ref{intro2}.b.2),
%and the set $A$, 
and put $E_n= A_n \cup D_n$, $n \in \mathbb{N}$.  
As in (\ref{complementunion}), since 
$ \ell_2((\mu(A_n))_n)= \ell_2((\mu(D_n))_n)=0,$
we get
%\begin{eqnarray}\label{complementunionbis}
 $\ell_2((\mu(E_n))_n)=0.$ 
Now, set  
\begin{eqnarray}\label{units}
\displaystyle{v^*=\varphi(w) + \varphi(z) + \varphi \Bigl(4 \, \alpha
\, D^{(1)} \bigvee_{n \in \mathbb{N}}
\psi_n (2 \, u^*) \Bigr)}, 
\end{eqnarray}
and let $\delta $ be as in (\ref{croc}). 
By \ref{intro2}.b.2), we have
\begin{eqnarray}\label{deltaa}
\ell_{\mathbb{R}} \Bigl( \Bigl( 
\sup_{s \in   A \setminus E_n} 
\displaystyle{ 
\int_{G \setminus B(s,\delta)} L_n(s,t) \, d\mu(t) 
}\Bigr)_n \Bigr) =0.
\end{eqnarray}
%Taking into account 
By \ref{intro}.b.6),
(\ref{psinpsin}), (\ref{deltaa}),
%Proposition \ref{beginning} 
 thanks to the convexity of the modular $\rho^{\varphi}$,
and using Corollary \ref{jensencoroll}, where the roles of $f$ and $h$ 
are played by the functions \\
%\begin{eqnarray*}\label{jensenschumann2}
$4 \, \alpha\, D^{(1)} \, \psi_n(\vert f(\cdot)-f(s)\vert )$ and
$\dfrac{L_n(s,\cdot)}{D^{(1)}}$
%\end{eqnarray*}
respectively (with $n$ and $s$ fixed), for each 
$n \in H$  and
$s \in A \setminus E_n$ we get
\begin{eqnarray}\label{chopinstudies}
0 & \leq_{X}& \varphi(\alpha \vert (T_n f)(s)-f(s) \vert ) =\varphi\Bigr(\alpha \Bigl\vert 
\int_G K_n(s,t,f(t) )\, d\mu(t) -f(s)\Bigr\vert  \Bigr) \nonumber 
 \leq_{X} \\ & \leq_{X}&
\dfrac12 \, \varphi\Bigr(2 \, \alpha \Bigl\vert 
\int_G K_n(s,t,f(s)) \, d\mu(t) -f(s)\Bigr\vert  \Bigr) + 
\nonumber \\ &+&
\dfrac12 \, \varphi\Bigr(2 \, \alpha    \nonumber
\int_G \vert K_n(s,t,f(t))-K_n(s,t,f(s)) \vert  \, d\mu(t) \Bigr)  \leq_{X}\\  & \leq_{X}&
\dfrac12 \, \bigvee_{s \in A \setminus E_n} 
\,\varphi\Bigr(2 \, \alpha \Bigl\vert 
\int_G K_n(s,t,f(s)) \, d\mu(t) -f(s)\Bigr\vert  \Bigr) + 
\nonumber \\ &+&
\dfrac12 \, \varphi\Bigr(2 \, \alpha 
\int_G  L_n(s,t) \, \psi_n(\vert f(t)-f(s) \vert )\, d\mu(t) \Bigr)  \leq_{X} \nonumber \\ &  \leq_{X}&
\dfrac12 \, \varphi\Bigr(2 \, \alpha \bigvee_{s \in A \setminus E_n,
 f(s)\neq 0} 
\Bigl\vert  \int_G K_n(s,t,f(s)) \, d\mu(t) -f(s)\Bigr\vert  \Bigr) + 
\nonumber \\ &+&
\dfrac14 \, \varphi\Bigr(4 \, \alpha 
\int_{G \cap B(s,\delta)} L_n(s,t) \,\psi_n(\vert f(t)-f(s) \vert ) \, d\mu(t) \Bigr)+  \\
 &+& \dfrac14 \, \varphi\Bigr(4 \, \alpha \int_{G \setminus B(s,\delta)} 
\nonumber
L_n(s,t) \, \psi_n(\vert f(t)-f(s) \vert ) \, d\mu(t)\Bigr)  \leq_{X} \\
& \leq_{X}&
\dfrac12 \, \varphi\Bigr(2 \, \alpha 
\bigvee_{u \in {\mathbf{X}} \setminus \{0\}} \Bigl(
\bigvee_{s \in A \setminus E_n} 
\Bigl\vert  \int_G K_n(s,t,u) \, d\mu(t) 
-u\Bigr\vert  \Bigr) \Bigr)+ \nonumber
\\
\end{eqnarray}
\begin{eqnarray*}
  &+& \dfrac{1}{4 \,D^{(1)}} \int_{C \cap B(s, \delta)}
L_n(s,t) \, \varphi(4 \, \alpha \,D^{(1)} \nonumber
\psi_n(\vert f(t)-f(s) \vert ) \, d\mu(t)\Bigr) + \\
&+& \nonumber
 \dfrac{1}{4 \,D^{(1)}} \int_{C \setminus B(s, \delta)}
L_n(s,t) \, \varphi(4 \, \alpha \,D^{(1)}
\psi_n(\vert f(t)-f(s) \vert ) \, d\mu(t)\Bigr)  \leq_{X}\\  & \leq_{X}&
\dfrac12 \varphi( \varepsilon_n \, z) + \nonumber
\dfrac14 \varphi( \varepsilon \, w) +\frac14 \Bigl(
\int_{G\setminus B(s,\delta)} L_n(s,t) d\mu(t) \Bigr) \cdot \varphi
\Bigl( 4 \alpha D^{(1)} \bigvee_{n \in \mathbb{N}} \nonumber
\psi_n (2 u^*) \Bigr)  \leq_{X} \\  
 &\leq_{X}& 
\dfrac12 \, \varepsilon_n \, \varphi(z)+
\Bigl( \varepsilon + 
\sup_{s \in A \setminus E_n}
\int_{G\setminus B(s,\delta)} L_n(s,t) \, d\mu(t) \Bigr)
\cdot v^*  \nonumber \leq_X  \\  \nonumber
&\leq_{X}& 
\Bigl( \varepsilon + \varepsilon_n +
\sup_{s \in A \setminus E_n} 
\int_{G\setminus B(s,\delta)} L_n(s,t) \, d\mu(t) \Bigr)
\cdot v^* ,
\end{eqnarray*}
%%%%% 
 taking into account that $\varepsilon_1 \leq 1$
and by virtue of 
formula (\ref{mozart0}) in Proposition
\ref{beginning}.
Thus, all terms of (\ref{chopinstudies}) belong to the 
vector lattice $V[v^*]$, defined by 
\begin{eqnarray}\label{Vw}
V[v^*]=\{x \in {\mathbf{X}} : \text{    there   exists  } \sigma \in 
\mathbb{R}^+ \text{   with   } \vert  x \vert \leq \sigma \, v^* \}.
\end{eqnarray}
%As seen in Example \ref{Vv}, 
As seen in Section \ref{due}, 
$v^*$ is a strong order unit in
$V[v^*]$. Thus, taking in (\ref{chopinstudies}) the norm 
$\|\cdot\|_{v^*}$, defined analogously as in 
(\ref{latticenorm}),   from monotonicity 
and subadditivity of 
$\|\cdot\|_{v^*}$, 
\ref{intro2}.a.3), (\ref{triple}) and (\ref{deltaa}),
 and taking into account the monotonicity of 
$(\varepsilon_n)$ and \ref{limsupoperators}.f), we obtain 
\begin{eqnarray}\label{iunonorm}
0 &\leq& 
\nonumber \,  \bar{\ell}_{\mathbb{R}} \Bigl( \Bigl(
\Bigl\|\bigvee_{s \in A \setminus E_n}
\varphi(\alpha \vert (T_n f)(s)-f(s) \vert ) 
%\int_G \vert K_n(s,t,f(t))-K_n(s,t,f(s)) \vert \, d\mu(t) 
\Bigr\|_{v^*}\Bigr)_{n \in \mathbb{N}} \Bigr) =
\\&=& \nonumber \,  \bar{\ell}_{\mathbb{R}} \Bigl( \Bigl(
\Bigl\|\bigvee_{s \in A \setminus E_n}
\varphi(\alpha \vert (T_n f)(s)-f(s) \vert ) 
\Bigr\|_{v^*}\Bigr)_{n \in H} \Bigr)
\leq \\ &\leq& \bar{\ell}_{\mathbb{R}} \, \Bigl( \Bigl( \Bigl\|
\Bigl( \varepsilon +   \varepsilon_n \,+
\sup_{s \in A \setminus E_n} 
\int_{G\setminus B(s,\delta)} L_n(s,t) \, d\mu(t) \Bigr) 
\cdot v^*\Bigr\|_{v^*}\Bigr)_{n \in H} \Bigr) =
\\ &=& \bar{\ell}_{\mathbb{R}} \, \Bigl( \Bigl( \Bigl\|
\Bigl( \varepsilon +   \varepsilon_n \, +
\nonumber
\sup_{s \in A \setminus E_n}
\int_{G\setminus B(s,\delta)} L_n(s,t) \, d\mu(t) \Bigr) 
\cdot v^*\Bigr\|_{v^*}\Bigr)_{n \in \mathbb{N}} \Bigr) \leq
\\&\leq&  \varepsilon + \bar{\ell}_{\mathbb{R}}
\Bigl(  \Bigl( 
\sup_{s \in A \setminus E_n} 
\int_{G \setminus B(s,\delta)} L_n(s,t) \, d\mu(t) \Bigr)_{n
\in \mathbb{N}} \Bigr) +  \bar{\ell}_{\mathbb{R}}
((\varepsilon_n)_n)  \nonumber \leq \varepsilon . 
\end{eqnarray}
From (\ref{iunonorm}) and the arbitrariness of $\varepsilon 
\in\mathbb{R}^+$ it follows that 
\begin{eqnarray*}
 \bar{\ell}_{\mathbb{R}} \Bigl( \Bigl(
\Bigl\|\bigvee_{s \in A \setminus E_n}
\varphi(\alpha \vert (T_n f)(s)-f(s) \vert )
\Bigr\|_{v^*}\Bigr)_{n \in \mathbb{N}} \Bigr)=0,
\end{eqnarray*}
and hence, by Axiom \ref{limsuppresentation}.e), we obtain
\begin{eqnarray*}
\ell_{\mathbb{R}} \Bigl( \Bigl(
\Bigl\|\bigvee_{s \in A \setminus E_n}
\varphi(\alpha \vert (T_n f)(s)-f(s) \vert )
% \int_G \vert K_n(s,t,f(t))-K_n(s,t,f(s)) \vert \, d\mu(t) 
\Bigr\|_{v^*}\Bigr)_{n \in \mathbb{N}} \Bigr)=0.
\end{eqnarray*}
From this and condition {\bf H*) }
we deduce
\begin{eqnarray*}
\ell \Bigl( \Bigl(
\bigvee_{s \in A \setminus E_n}
\varphi(\alpha \vert (T_n f)(s)-f(s) \vert )
% \int_G\vert K_n(s,t,f(t))-K_n(s,t,f(s)) \vert \, d\mu(t) 
\Bigr)_{n \in \mathbb{N}} \Bigr)=0,
\end{eqnarray*}
and hence the sequence 
$
\varphi(\alpha \vert T_n f - f\vert ), \, \, n \in \mathbb{N},
$
converges 
in $\mu$-measure to $0$ on $A$.
The proof of the results about uniform convergence is analogous,
taking $A_n=D_n=\emptyset$.
To prove the last statement it is enough to argue analogously as above, 
taking $\alpha=1$ and $\varphi$ equal to the identity map.
\end{proof}
\vspace{3mm}
 
Now we define 
modular convergence 
in the context of 
abstract convergence in vector lattices.
\begin{definition}\label{modularconvergence}
\rm 
%(a) 
A sequence $(f_n)_n$ of functions in $L^{\varphi}(G)$
is \textit{${\ell}$-modularly convergent} to
$f \in L^{\varphi}(G)$ iff there exists a constant 
$\alpha \in \mathbb{R}^+$ such that
\begin{eqnarray*}\label{maintheorem1}
{\ell}((\rho^{\varphi}(\alpha(f_n-f))_n))=0.
\end{eqnarray*}
We are ready to present the main result on modular convergence 
in our setting, with respect to the convergences
introduced axiomatically in \ref{convergenze}.
\begin{theorem}\label{mainmodular1}
Under Assumptions \rm \ref{intro}, \em
if ${\mathbf{X}}_1={\mathbf{X}}$, 
$\mathbb{K}$ is $(M)$-singular 
and condition \rm \ref{principale}.1) \em holds, then
there is a constant 
$\alpha \in \mathbb{R}^+$ with
%\begin{eqnarray*}\label{maintheorem1m}
${\ell}((\rho^{\varphi}(\alpha(T_n f-f))_n))=0$
%\end{eqnarray*}
for every $f \in {\mathcal C}_c(G)$.
\end{theorem}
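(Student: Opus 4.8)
The plan is to reduce the statement to the Vitali-type theorem \ref{vitali0}, feeding it $\mu$-equiabsolute continuity from Corollary \ref{principalebis} and convergence in measure from Theorem \ref{theoremeleven}. Let $\beta \in \mathbb{R}^+$ be the ($f$-independent) constant provided by Corollary \ref{principalebis} and $\alpha_0 \in \mathbb{R}^+$ the ($f$-independent) constant provided by Theorem \ref{theoremeleven}, and set $\alpha := \min\{\beta,\alpha_0\} \in \mathbb{R}^+$; this $\alpha$ will work uniformly in $f$. Fix $f \in \mathcal{C}_c(G)$, let $C$ be a compact set with $f(g)=0$ for every $g \in G\setminus C$, and put $g_n := \varphi(\alpha\,|T_n f - f|)$, $n \in \mathbb{N}$. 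Since $g_n \geq_{X_1} 0$, the asserted equality $\ell((\rho^{\varphi}(\alpha(T_n f - f)))_n) = 0$ is exactly $\ell\bigl((\int_G g_n(s)\,d\mu(s))_n\bigr) = 0$, i.e. the $L^1$-convergence of $(g_n)_n$ to $0$.

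First I would verify that each $g_n$ is integrable on $G$, so that Theorem \ref{vitali0} (stated for sequences of integrable functions) is applicable. Since $\varphi$ is increasing on $\mathbf{X}_1^+$ and $0 \leq_{X_1} \alpha\,|T_n f - f| \leq_{X_1} \beta\,|T_n f - f|$, one has $0 \leq_{X_1} g_n \leq_{X_1} \varphi(\beta\,|T_n f - f|)$ pointwise on $G$, and the right-hand side is integrable: $\varphi(\beta|f|)$ is integrable by Proposition \ref{compositionmozarteum}; arguing as in the proof of Theorem \ref{principale} and using Remark \ref{pf}.b), $\varphi(\beta|T_n f|)$ is dominated by the integrable function $s \mapsto (D^{(1)})^{-1}\bigl(\int_C L_n(s,t)\,d\mu(t)\bigr)\cdot\Theta^{*}$; and the convexity of $\varphi$ with $\varphi(0)=0$ bounds $\varphi(\beta\,|T_n f - f|)$ by a combination of these two. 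Hence $g_n$ is dominated by an integrable function, and thus integrable.

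Next I would transfer the two hypotheses of Theorem \ref{vitali0} to $(g_n)_n$ by domination. For $\mu$-equiabsolute continuity: by monotonicity of the integral (Remark \ref{pf}.f)) one has $0 \leq_X \int_{A_n} g_n\,d\mu \leq_X \int_{A_n}\varphi(\beta\,|T_n f - f|)\,d\mu$ whenever $\ell_2((\mu(A_n))_n)=0$, so Axioms \ref{convergenze}.b), \ref{convergenze}.e) give \ref{eac}.1) for $(g_n)_n$; taking for $(g_n)_n$ the same increasing sequence $(B_m)_m$ that witnesses \ref{eac}.2) for $\varphi(\beta\,|T_n f - f|)$ and using $0 \leq_X \int_{G\setminus B_m} g_n\,d\mu \leq_X \int_{G\setminus B_m}\varphi(\beta\,|T_n f - f|)\,d\mu$ together with Axiom \ref{limsuppresentation}.c), one obtains \ref{eac}.2). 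For convergence in $\mu$-measure: fix $A \in \mathcal{A}$ with $\mu(A) < +\infty$ and take $(A_n)_n$ associated with the convergence in $\mu$-measure of $(\varphi(\alpha_0\,|T_n f - f|))_n$ on $A$; then $0 \leq_{X_1}\bigvee_{g \in A\setminus A_n} g_n(g) \leq_{X_1}\bigvee_{g \in A\setminus A_n}\varphi(\alpha_0\,|T_n f(g) - f(g)|)$, and Axioms \ref{convergenze} yield $\ell_1\bigl((\bigvee_{g \in A\setminus A_n} g_n(g))_n\bigr)=0$, so $(g_n)_n$ converges in $\mu$-measure to $0$ on $A$.

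Theorem \ref{vitali0} applied to $(g_n)_n$ then gives $\ell\bigl((\int_G g_n(s)\,d\mu(s))_n\bigr)=0$, which is the claim. If one carries out the estimates of Corollary \ref{principalebis} and Theorem \ref{theoremeleven} over the index set $H$ of Definition \ref{intro2}, one first obtains $\ell\bigl((\int_G g_n\,d\mu)_{n\in H}\bigr)=0$, hence $\overline{\ell}\bigl((\int_G g_n\,d\mu)_{n\in H}\bigr)=0$ by Axiom \ref{limsuppresentation}.d), hence $\overline{\ell}\bigl((\int_G g_n\,d\mu)_{n\in\mathbb{N}}\bigr)=0$ by \ref{intro2}.a.4), and finally $\ell\bigl((\int_G g_n\,d\mu)_{n\in\mathbb{N}}\bigr)=0$ by Axiom \ref{limsuppresentation}.e). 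I expect no conceptual obstacle here, since the substantive analysis already sits in Corollary \ref{principalebis} and Theorem \ref{theoremeleven}; the delicate points are of bookkeeping type: confirming that $(g_n)_n$ consists of genuinely integrable functions so that Theorem \ref{vitali0} applies, carrying out the domination-transfer of both hypotheses inside the axiomatic framework, and managing the passage between the index sets $H$ and $\mathbb{N}$.
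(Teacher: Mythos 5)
Your proposal is correct and follows essentially the same route as the paper, whose proof is a one-line combination of Corollary \ref{principalebis}, Theorem \ref{theoremeleven} and a Vitali-type theorem applied to $\varphi(\alpha\vert T_n f - f\vert)$, $n \in H$. Your version supplies the bookkeeping the paper leaves implicit (the choice $\alpha=\min\{\beta,\alpha_0\}$, the domination transfers, the passage between $H$ and $\mathbb{N}$), and your appeal to Theorem \ref{vitali0} rather than Theorem \ref{tredieci} is the more accurate citation, since the functions $\varphi(\alpha\vert T_n f - f\vert)$ are integrable but not simple.
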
 
\begin{proof}
Theorem \ref{mainmodular1} is a consequence of Corollary \ref{principalebis}, Theorem
\ref{theoremeleven}
and Theorem \ref{tredieci} applied to the sequence
$\varphi(\alpha \vert   T_n f - f \vert ),$ $n \in H$.
\end{proof}
\end{definition}

\subsection{Applications to Mellin kernel}\label{appl}
%{\mg{\tiny ????????? perche' non trattiamo piu' MGW e MPC
%e neppure li nominiamo. Quindi il titolo della subsection 
%e la frase di sotto vanno cambiati????}}  }
%We consider nonlinear
%Mellin operators and in particular moment, Mellin-Gauss-Weierstrass
%and Mellin-Poisson-Cauchy-type kernels 
%(see also \cite{BDMEDITERRANEAN}).
%\mg{nascoste le vecchie righe}
In this setting, we formulate the following structural assumptions:
\begin{assumptions} \label{assumptionsmellin}\rm
\begin{itemize}
\item[\ref{assumptionsmellin}.a)]
 Let $G=(\mathbb{R}^+,d_{\ln})$, where
%\begin{eqnarray*}\label{dist}
$d_{\ln} (t_1, t_2) =\vert \ln t_1 - \ln t_2\vert $, $t_1, t_2 \in \mathbb{R}^+ $,
%\end{eqnarray*}
let ${\mathbf{X}}_1={\mathbf{X}}$,
${\mathbf{X}}_2={\mathbb{R}}$,
and for any measurable set $S \subset \mathbb{R}^+$ put
$\displaystyle{\mu(S)= \int_S \frac{dt}{t}}.$
Let $\widetilde{\mathcal M}$ 
be the set of all sequences of non-negative functions
$\widetilde{L}_n$ defined on $\mathbb{R}^+$, 
 integrable 
with respect to $\mu$ and bounded on every subset 
$A \subset \mathbb{R}^+$ with $\mu(A) < +\infty$.

\item[\ref{assumptionsmellin}.b)] Let $\Xi= (\psi_n)_n \subset \Psi$ be as in \ref{intro} (c), and
denote by ${\widetilde{\mathcal K}}_{\Xi}$ the set of all sequences of functions
$\widetilde{K}_n: \mathbb{R}^+ \times \mathbb{R} \to 
{\mathbf{X}}$, $n \in \mathbb{N}$, such that:
\begin{itemize}
\item[\ref{assumptionsmellin}.b.i)]
 $\widetilde{K}_n(\cdot,u)$ is integrable for each $
u \in {\mathbf{X}}$ and $n \in \mathbb{N}$, and $\widetilde{K}_n(t,0) = 0 $ for all
$n \in \mathbb{N}$ and $t \in \mathbb{R}^+$;
\item[\ref{assumptionsmellin}.b.ii)]  
there are sequences $(\widetilde{L}_n)_n \subset \widetilde{\mathcal M}$ and
$(\psi_n)_n \subset \Psi$ with
\begin{align*}\label{mellinlipschitz}
\vert \widetilde{K}_n(t,u)-\widetilde{K}_n(t,v) \vert  \leq_{X} \widetilde{L}_n(t) \, \psi_n(\vert u-v\vert )
\end{align*}
for all $n \in \mathbb{N}$, $t \in \mathbb{R}^+$ and 
$u$, $v \in \mathbf{X}$.
\end{itemize}
\item[\ref{assumptionsmellin}.c)] Let $\widetilde{\mathbb{K}}=(\widetilde{K}_n)_n \in {\widetilde{\mathcal K}}_{\Xi}$, and consider
a sequence $\widetilde{\textbf{T}}=(\widetilde{T_n})_n$ of nonlinear 
Mellin-type operators defined by
\begin{eqnarray*}\label{mellinoperatori}
(\widetilde{T_n} f)(s)=\int_0^{+ \infty} \, \widetilde{K}_n\Bigl(\frac{t}{s},f(t)\Bigr)
\frac{dt}{t}, \quad s \in \mathcal{\mathbb{R}}^+,
\end{eqnarray*}
where $f \in $ Dom $\widetilde{\textbf{T}}=$ $\displaystyle{\bigcap_{n=1}^{\infty}}$
Dom $\widetilde{T_n}$, and Dom $\widetilde{T_n}$ is the set
on which $\widetilde{T_n} f$ is well-defined, for each 
$n \in \mathbb{N}$.
\end{itemize}
\end{assumptions}
It is not difficult to check that the function
$d_{\ln}$  is actually a distance. \\ \\
Now we give the concept of singularity in the setting of Mellin operators.
\begin{definition}\label{intro2mellin}
 \rm 
%(a) 
We say that $\widetilde{\mathbb{K}}= (\widetilde{K}_n)_n$ is 
\emph{singular} iff
there are  an infinite set 
$H \subset \mathbb{N}$ and
%an element 
$D^{(1)} \in {\mathbb{R}}^+$ with
\begin{itemize}
\item[{\ref{intro2mellin}.1)}]
%\begin{eqnarray*}\label{D*11}
$\displaystyle{
\int_0^{+ \infty} \widetilde{L}_n(t) \, \dfrac{dt}{t} \leq D^{(1)} }
\text{   for  every   } n \in H$,
%\end{eqnarray*}
and
\item[{\ref{intro2mellin}.2)}]
$\overline{\ell}((a_n)_{n\in \mathbb{N}})=
\overline{\ell}((a_n)_{n\in H})$
for each sequence $(a_n)_n$ in $\mathbb{R}$.
\end{itemize}
A singular family ${\mathcal K}$ is said to be
%\emph{$(M)$-singular} (resp., 
\emph{$(U)$-singular} iff:
\begin{itemize}
\item[{\ref{intro2mellin}.3)}]
%\begin{eqnarray*}\label{prejensenmellin}
$\displaystyle{
\int_0^{+ \infty} \widetilde{L}_n(t) \, \dfrac{dt}{t} >0} \quad \text{for  all } n \in \mathbb{N};$
%\end{eqnarray*}
\item[{\ref{intro2mellin}.4)}] 
for any 
$\delta \in \mathbb{R}^+$, $\delta > 1$, one has 
\begin{eqnarray*}\label{checkmellin}
\displaystyle{ {\ell}_{\mathbb{R}} \Bigl( \Bigl(
 \int_{\mathbb{R}^+ \setminus [1/\delta, \delta]}
 \widetilde{L}_n(t) \, \dfrac{dt}{t}
}\Bigr)_n \Bigr)=0;   
\end{eqnarray*} 

\item[{\ref{intro2mellin}.5)}] 
there are $z \in {\mathbf{X}}^+ \setminus \{0\}$
and an $(o)$-sequence $(\varepsilon_n)_n$ in ${\mathbb{R}}^+$, 
with
\begin{eqnarray*}\label{Thetanbis}
\bigvee_{u \in \mathbf{X}_1 \setminus \{0\}
} \Bigl\vert \int_0^{+\infty} \widetilde{K}_n(t,u) \, \dfrac{dt}{t} - u
\Bigr\vert \leq_X \varepsilon_n \, z
\end{eqnarray*}
whenever $n \in H$).
\end{itemize}
\end{definition}
\begin{remark}
\phantom{A}
 \begin{itemize}
\item[a)] 
Note that, if we set
%\begin{align*}
$\displaystyle{L_n(s,t)=\widetilde{L}_n\Bigl( \frac{t}{s}\Bigr)}$, 
$\displaystyle{K_n(s,t,u)=\widetilde{K}_n\Bigl( \frac{t}{s},u \Bigr)}$
%\end{align*}
for all $s$, $t \in \mathbb{R}^+$, $u \in {\mathbf{X}}$
and $n \in\mathbb{N}$, then it is not
difficult to see that, if $\widetilde{K}_n$, $\widetilde{L}_n$, $n \in \mathbb{N}$,
fulfil Assumptions \ref{assumptionsmellin}, then $K_n$, $L_n$, $n \in \mathbb{N}$,
satisfy Assumptions \ref{intro2}, and to check that
\begin{eqnarray*} 
&&\int_0^{+\infty} \widetilde{L}_n\Bigl(\frac{t}{s}\Bigr) \, \frac{dt}{t} =
\int_0^{+\infty} \widetilde{L}_n(z) \, \frac{s \, dz}{sz}=
\int_0^{+\infty} \widetilde{L}_n(z) \, \frac{dz}{z},\\
&& \int_0^{+\infty}
\widetilde{K}_n\Bigl(\frac{t}{s},u\Bigr) \, \frac{dt}{t} =
\int_0^{+\infty} \widetilde{K}_n(z,u)\frac{s \, dz}{sz} = \int_0^{+\infty} \widetilde{K}_n(z,u)\frac{dz}{z}
\end{eqnarray*}
for all $n \in \mathbb{N}$,
$s \in \mathbb{R}^+$ and $u \in \mathbf{X}$.
\item[b)]
%===========================
Analogously to 
 \cite[Example 1]{bm1} we have that , 
if $(\widetilde{L}_n)_n$
 satisfies (\ref{intro2mellin}.4), namely
\begin{eqnarray*}\label{checkaamu}
\displaystyle{ {\ell}_{\mathbb{R}} \Bigl( \Bigl(
 \int_{\mathbb{R}^+ \setminus [1/\delta, \delta]}
 \widetilde{L}_n(t) \, \dfrac{dt}{t}
}\Bigr)_n \Bigr)=0   
\end{eqnarray*} 
for each $\delta >1$, then for each compact 
subset $C \subset \mathbb{R}^+$ 
there exists a  set
$B \subset \mathbb{R}^+$ of finite $\mu$-measure such that  
\begin{eqnarray*}\label{aamu1}
\bar{\ell}_{\mathbb{R}}  \Bigl( \Bigl( \sup_{t \in C}\int_{{\mathbb{R}}^+ \setminus B} \widetilde{L}_n
\Bigl(\dfrac{s}{t}\Bigr) \, \frac{ds}{s} \Bigr)_n \Bigr)=0.
\end{eqnarray*}
In fact,
pick arbitrarily a compact set $C \subset \mathbb{R}^+$.
Without loss of generality, we may suppose
that $C=[1/M,M]$, where $M>1$.
% In correspondence with $M$, 
Let $\delta = 2\, M.$ For every 
$t \in C$ it is $\frac{\delta}{t}%\dfrac{2 \, M}{t} 
\geq 
%{2\, M} \, \frac{1}{M}=
2$, and $\frac{1}{\delta \, t}
%=\dfrac{1}{2 \, M \, t}
\leq%\frac{M}{2 \, M}=
\frac12$.
From this and the non-negativity of the $\widetilde{L}_n$'s
it follows that
\begin{eqnarray*}\label{mult} \nonumber
0 &\leq& \sup_{t \in C}
\int_{\mathbb{R}^+ \setminus [1/\delta, \delta]}  
\widetilde{L}_n
\Bigl(\dfrac{s}{t}\Bigr) \, \frac{ds}{s} \leq \sup_{t \in C}
\int_{0}^{1/\delta}
\widetilde{L}_n
\Bigl(\dfrac{s}{t}\Bigr) \, \frac{ds}{s}
+\sup_{t \in C}\int_{\delta}^{+\infty}\widetilde{L}_n
\Bigl(\dfrac{s}{t}\Bigr) \, \frac{ds}{s}
= \\&=& \sup_{t \in C}\int_{0}^{1/(\delta \, t)}\widetilde{L}_n (z)
\dfrac{dz}{z} + 
\sup_{t \in C}\int_{\delta/t}^{+\infty} \widetilde{L}_n (z)
\dfrac{dz}{z} \leq \\ &\leq& 
\int_{0}^{1/2}\widetilde{L}_n (z)
\dfrac{dz}{z} + \int_{2}^{+\infty} \widetilde{L}_n (z)
\dfrac{dz}{z} =J_n. \nonumber
\end{eqnarray*}
Since, by hypothesis, $\ell_{\mathbb{R}}((J_n)_n)=0$, 
then, taking $B=[1/\delta, \delta]$, the assertion follows from 
previous inequalities and Axioms \ref{convergenze}.
\end{itemize}
\end{remark}
The next result follows directly from Theorem \ref{mainmodular1},
adapting it to the setting of Mellin-type operators.
\begin{theorem}\label{mainmodular1mellin}
%\mbox{\rm \cite[Theorem 6.9]{BS2021}}
Under Assumptions \rm \ref{assumptionsmellin},
\em assume that $\widetilde{\mathbb{K}}$ is $(U)$-singular,
Then, for every $f \in {\mathcal C}_c(\mathbb{R}^+)$,
%Furthermore, if $\widetilde{\mathbb{K}}$ is $(U)$-singular,
the sequence $(\widetilde{T_n} f )_n$ 
is uniformly convergent to $f$  
on ${\mathbb{R}}^+$, and modularly convergent 
to $f$ with respect to the modular $\rho^{\varphi}$, where the constant $a$ in 
(\ref{modularconvergence0}) can be chosen
independently of $f$.
\end{theorem}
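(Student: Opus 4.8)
The plan is to reduce the statement to the Urysohn-type framework of Section~\ref{argument} by means of the change of kernels recorded in the Remark above, and then to invoke Theorem~\ref{mainmodular1} for the modular assertion and a mild strengthening of Theorem~\ref{theoremeleven} for the uniform one. First I would put $L_n(s,t)=\widetilde{L}_n(t/s)$ and $K_n(s,t,u)=\widetilde{K}_n(t/s,u)$ for $s,t\in\mathbb{R}^+$, $u\in{\mathbf X}$ and $n\in\mathbb N$. As observed in the Remark, the families $(L_n)_n$, $(K_n)_n$ then satisfy Assumptions~\ref{intro} (with $G=(\mathbb{R}^+,d_{\ln})$, ${\mathcal A}$ the Borel $\sigma$-algebra, ${\mathbf X}_2=\mathbb{R}$ and $\mu(S)=\int_S dt/t$, which is $\sigma$-finite and regular), the Urysohn operators built from $(K_n)_n$ via $(\ref{operatori})$ coincide with the Mellin operators $\widetilde{T_n}$, and by the substitution $z=t/s$ one has the scale-invariance identities
\[
\int_G L_n(s,t)\,d\mu(t)=\int_G L_n(s,t)\,d\mu(s)=\int_0^{+\infty}\widetilde{L}_n(z)\,\frac{dz}{z},\qquad
\int_G K_n(s,t,u)\,d\mu(t)=\int_0^{+\infty}\widetilde{K}_n(z,u)\,\frac{dz}{z}
\]
for every $s\in\mathbb{R}^+$. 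In particular ${\mathcal C}_c(\mathbb{R}^+)\subset\mathrm{Dom}\,\widetilde{\mathbf{T}}$: for $f\in{\mathcal C}_c(\mathbb{R}^+)$ with compact support $C$ the integrand $t\mapsto\widetilde{K}_n(t/s,f(t))$ vanishes off $C$ by \ref{assumptionsmellin}.b.i) and is dominated there by $\widetilde{L}_n(t/s)\,\psi_n(u^*)$ with $u^*=\bigvee_g\vert f(g)\vert$, which is $\mu$-integrable on $C$.

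Next I would verify that $(U)$-singularity of $\widetilde{\mathbb K}$ in the sense of Definition~\ref{intro2mellin} forces $\mathbb K=(K_n)_n$ to be $(U)$-singular, hence $(M)$-singular, in the sense of Definition~\ref{intro2}, with the same infinite set $H$ and the same constant $D^{(1)}$: conditions \ref{intro2}.a.1)--\ref{intro2}.a.2) follow from \ref{intro2mellin}.1) and the first identity above, \ref{intro2}.a.3)--\ref{intro2}.a.4) are exactly \ref{intro2mellin}.2), and \ref{intro2}.b.1) is \ref{intro2mellin}.3). For \ref{intro2}.b.2) in its $(U)$-form, note that $B(s,\delta)=\{t\in\mathbb{R}^+:e^{-\delta}s\le t\le e^{\delta}s\}$, so the substitution $z=t/s$ gives
\[
\int_{G\setminus B(s,\delta)}L_n(s,t)\,d\mu(t)=\int_{\mathbb{R}^+\setminus[e^{-\delta},e^{\delta}]}\widetilde{L}_n(z)\,\frac{dz}{z},
\]
which is independent of $s$ and tends to $0$ in $\ell_{\mathbb R}$ by \ref{intro2mellin}.4) (applied with $e^{\delta}>1$); similarly the second identity and \ref{intro2mellin}.5) give \ref{intro2}.b.3) in its $(U)$-form, again with a bound independent of $s$. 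Consequently both $(U)$-conditions in fact hold with the ambient set taken to be the whole of $\mathbb{R}^+$, not merely a set of finite $\mu$-measure.

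I would then check condition~\ref{principale}.1): by part~b) of the Remark above, for each compact $C\subset\mathbb{R}^+$ there is a set $B\subset\mathbb{R}^+$ of finite $\mu$-measure with $\overline{\ell}_{\mathbb R}\bigl(\bigl(\sup_{t\in C}\int_{\mathbb{R}^+\setminus B}\widetilde{L}_n(s/t)\,\frac{ds}{s}\bigr)_n\bigr)=0$; rewriting the left side of $(\ref{propertystar})$ through $L_n$ and choosing the constant, hence non-decreasing, sequence $B_m=B$, $m\in\mathbb N$, makes it equal to $\ell_{\mathbb R}((0)_m)=0$. Now, since ${\mathbf X}_1={\mathbf X}$, $\mathbb K$ is $(M)$-singular and \ref{principale}.1) holds, Theorem~\ref{mainmodular1} provides a constant $\alpha\in\mathbb{R}^+$, independent of $f$, with $\ell\bigl((\rho^{\varphi}(\alpha(\widetilde{T_n}f-f)))_n\bigr)=0$ for every $f\in{\mathcal C}_c(\mathbb{R}^+)$; this is the claimed modular convergence to $f$ with respect to $\rho^{\varphi}$, the constant $a$ of $(\ref{modularconvergence0})$ being $\alpha$, uniform in $f$.

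For the uniform convergence on $\mathbb{R}^+$ I would re-run the proof of Theorem~\ref{theoremeleven} in its $(U)$-singular case with the ambient set $A$ replaced throughout by $\mathbb{R}^+$: there the exceptional sets $A_n$, $D_n$ are empty, the finiteness of $\mu(A)$ plays no role in the pointwise estimate $(\ref{chopinstudies})$ or in what follows, and the ingredients \ref{intro2}.b.2)--\ref{intro2}.b.3) are available for $A=\mathbb{R}^+$ by the second step above; this yields $\ell_1\bigl(\bigl(\bigvee_{s\in\mathbb{R}^+}\vert\widetilde{T_n}f(s)-f(s)\vert\bigr)_n\bigr)=0$, i.e.\ uniform convergence of $(\widetilde{T_n}f)_n$ to $f$ on $\mathbb{R}^+$. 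I expect the only genuine obstacle to be the bookkeeping of scale invariance in the second step: the work is in checking that \ref{intro2mellin}.1)--\ref{intro2mellin}.5) translate into the conditions of Definition~\ref{intro2} \emph{in their exceptional-set-free $(U)$-form and with ambient set all of $\mathbb{R}^+$}, which is precisely what simultaneously eliminates the exceptional sets and lifts uniform convergence from sets of finite $\mu$-measure to the entire half-line. Everything else is a direct appeal to the results of Sections~\ref{product}--\ref{argument}.
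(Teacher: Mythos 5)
Your proposal is correct and follows essentially the same route as the paper: translate the Mellin kernels into the Urysohn framework via $L_n(s,t)=\widetilde{L}_n(t/s)$, $K_n(s,t,u)=\widetilde{K}_n(t/s,u)$, use the scale-invariance identities to check singularity and condition \ref{principale}.1) (the latter via part b) of the preceding Remark), and then apply Theorem \ref{mainmodular1}. You are in fact more explicit than the paper on one point it glosses over --- that the scale invariance of \ref{intro2mellin}.4)--\ref{intro2mellin}.5) makes the $(U)$-singularity bounds independent of $s$, which is what upgrades the uniform convergence of Theorem \ref{theoremeleven} from sets of finite $\mu$-measure to all of $\mathbb{R}^+$ --- and this added detail is a genuine improvement rather than a deviation.
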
 
 Indeed, observe
that condition \ref{starproperty}.1)
 follows
from (\ref{intro2mellin}.4).
%(\ref{starpropertystar}).
\vspace{3mm}
%
%===================================
%

%=======================

%\vspace{3mm}

In the linear case, a
particular type of Mellin-type kernels is the \textit{moment kernel}, defined by
\begin{eqnarray}\label{moment}
\widetilde{L}_n(t)=n \, t^n \chi_{(0,1)}(t), \quad t \in \mathbb{R}^+.
\end{eqnarray}
For any $n \in \mathbb{N}$, $t \in \mathbb{R}^+$ and 
$u \in \mathbf{X}$, set
\begin{eqnarray}\label{momentbis}
%\widetilde{L}_n(t)=M_n(t), \quad 
\widetilde{K}_n (t,u)=\widetilde{L}_n (t) \cdot u.
\end{eqnarray}
Observe that for any $\delta > 1$,
$n \in \mathbb{N}$ and $u \in \mathbf{X}$ we have
\begin{eqnarray}\label{presentazion}
\int_0^{+\infty} \widetilde{K}_n (t,u) \, \nonumber \frac{dt}{t} &=& \Bigl(
\int_0^{+\infty} \widetilde{L}_n(t) \, \nonumber \frac{dt}{t} \Bigr) u= n 
\Bigl( \int_0^1 t^{n-1} \, dt \Bigr) u \, dt=u; \\
\int_{\mathbb{R}^+ \setminus [1/\delta,\delta]} \widetilde{L}_n(t) \, \frac{dt}{t}&=&
n \int_0^{1/\delta} t^{n-1} \, dt= \Bigl( \frac{1}{\delta} \Bigr)^n.
\end{eqnarray} From (\ref{moment}), (\ref{momentbis}) and (\ref{presentazion}) it follows
that the conditions in \ref{intro2mellin} 
on $(U)$-singularity are fulfilled.\\

% Note that it is possible to check that these 
%conditions are satisfied also by other Mellin-type kernels,
%like for instance Mellin-Gauss-Weierstrass and 
%Mellin-Poisson-Cauchy-type kernels, which 
%we will treat in a forthcoming paper 
%(see, e.g., \cite{BDMEDITERRANEAN}).\\ 
Furthermore, by proceeding and arguing similarly as in 
\cite[Subsection 3.4]{BCSVITALI}, it is 
possible to see that our theory includes also stochastic integration,
for example It\^{o}-type integrals.
Indeed, we can treat the standard Brownian motion 
${\mathbf{B}}=(B_t)_{0 \leq t \leq T}$ defined on
a probability space $(\Omega, \Sigma, \nu)$ as
a function defined on $[0,T]$ and taking values in
${\mathbf{X}}_2=L^2=L^2(\Omega, \Sigma, \nu)$ endowed 
with order convergence, or order filter convergence
with respect to a fixed free filter ${\mathcal F}$ on ${\mathbb{N}}$.
Moreover, we can consider a stochastic process with
(uniformly) continuous trajectories
$f:G \to {\mathbf{X}}$, where $G=[0,T]$ is
endowed with the usual distance and 
${\mathbf{X}}=L^0=L^0(\Omega, \Sigma, \nu)$ is
equipped with order convergence,
and to define the integral of $f$ with respect to $\mathbf{B}$
as an element of ${\mathbf{X}}=L^0$,
according to \ref{integrabilita} and \ref{integrabilitaa}
(see, e.g., \cite{BCSVITALI} and the related references therein).
Note that, in this context, $({\mathbf{X}}, {\mathbf{X}}_2, {\mathbf{X}})$
is a product triple, and  Axioms \ref{convergenze}, 
%and
\ref{limsuppresentation} are satisfied (see Examples \ref{limsupoperators}).
%=============================
\section*{Appendix}\label{appendix}
\begin{proof}[Proof of Proposition \ref{intproduct}]
It is readily seen that $h \cdot q$ 
is bounded of $G$. 
Now we prove its integrability. Set
%\begin{eqnarray*}\label{boundednessh}
$\displaystyle{u= \bigvee_{g \in G} \vert h(g) \vert }$.
%\end{eqnarray*}
Note that $u \in {\mathbf{X}}_1^{\prime}$, 
since $h$ is bounded on $G$. Moreover, as $h$ and $q$ are  integrable on $G$,
there are two defining sequences $(h_n)_n$ and $(q_n)_n$ of simple functions for $h$ and $q$,
respectively. Pick arbitrarily $A \in {\mathcal A}$ with  $\mu(A) \in {\mathbf{X}}_2$. There are two sequences $(A_n)_n$ and $(D_n)_n$ in ${\mathcal A}$ with 
$\ell_2((\mu(A_n))_n)=$ $\ell_2((\mu(D_n))_n)=0$  and
\begin{eqnarray}\label{sussmayrmeasurepif} 
\ell_1^{\prime}\Bigl(\Bigl( \bigvee_{g \in A \setminus A_n} \,
 \vert h_n(g)-h(g) \vert  \Bigr)_n\Bigr) = 
	\ell_1^{\prime \prime}\Bigl(\Bigl(\bigvee_{g \in A \setminus D_n} \, \vert q_n(g)-q(g) \vert  \Bigr)_n\Bigr)=0.
\end{eqnarray}
Put $E_n=A_n \cup D_n$, $n \in \mathbb{N}$. As in  (\ref{complementunion}), we have
%\begin{eqnarray*}\label{complementunion1}
 $\ell_2( (\mu(E_n))_n)=0$. 
%\end{eqnarray*}
Since $q$ is bounded on $G$, without loss of generality  we can assume that
$\displaystyle{
v= \bigvee_{g \in G, n \in \mathbb{N}} \vert q_n(g) \vert  \in {\mathbf{X}}_1^{\prime \prime}}$.
Moreover, for all $g \in G$ and $n \in \mathbb{N}$ it is
\begin{eqnarray}\label{0crucial0}
	0 &\leq_{X_1}& \nonumber
	\vert h(g) \, q(g) -  h_n(g) \, q_n(g) \vert   \leq_{X_1} \vert h(g) \vert  \cdot \vert q_n(g) - q(g) \vert   + \vert h_n(g) -h(g) \vert  \cdot \vert q_n(g)\vert   \leq_{X_1} \\ &\leq_{X_1}&
	u \, \vert q_n(g) - q(g) \vert   + \vert h_n(g) -h(g)\vert  \, v .
\end{eqnarray} 
From (\ref{sussmayrmeasurepif}),  (\ref{0crucial0}), 
Axioms \ref{convergenze}
and conditions \ref{ass}.7), \ref{ass}.8) we deduce 
%ho tolto vecchi color red
\begin{eqnarray*}\label{productproduct}
	\displaystyle{\ell_1\Bigl(\Bigl( \bigvee_{g \in A \setminus E_n} \, \vert h_n(g) \, q_n(g)-h(g) \, q(g)\vert  \Bigr)_n\Bigr)=0}. 
\end{eqnarray*}
Thus, the sequence $(h_n \cdot q_n)_n$ converges in measure to $h \cdot q$.  \\
Now we claim the $\mu$-equiabsolute continuity of the integrals of the $h_n \cdot \, q_n$'s. For each  $n \in \mathbb{N}$ and $g \in G$, it is
%\begin{eqnarray*}
$0 \leq_{X_1} \vert  h_n(g) \, q_n(g) \vert \leq_{X_1} \vert h_n(g)\vert  \, v, $
%\end{eqnarray*} 
and hence
\begin{eqnarray*}\label{equiboundedness}
	0 \leq_{X} \int_A \vert h_n(g) \, q_n(g) \vert \, d\mu(g) \leq_{X}  
	\Bigl( \int_A \vert h_n(g)\vert  \, d\mu(g) \Bigr) \cdot v, \,\,\text{  for  all  } n \in \mathbb{N}
	\text{   and  } A \in {\mathcal A}.
\end{eqnarray*}
The claim follows from this inequality,
%(\ref{equiboundedness}), 
the $\mu$-equiabsolute continuity of the integrals of the
$h_n$'s,  Axioms \ref{convergenze}  and condition  \ref{ass}.7). 

Now we prove the existence of a map $l:{\mathcal A} \to \mathbf{X}$, satisfying formula 
(\ref{l}) in Definition \ref{integrabilita}. Without loss of generality, we can suppose that $h$, $q$, $h_n$ and $q_n$ are positive for every $n$, 
and that the sequences  $(h_n)_n$, $(q_n )_n$ are increasing. 
Arguing analogously as in (\ref{0crucial0}), for each  $n$, $m \in \mathbb{N}$ with $m \geq n$ we get 
\begin{eqnarray*}\label{einekleinenacht}
0 &\leq_{X} & \Bigl( \bigvee_{A \in {\mathcal A}}
\Bigl( \int_A h_m(g) \, q_m(g) \, d\mu(g) -\int_A h_n(g) \,  q_n(g) \, d\mu(g) \nonumber \Bigr) \Bigr) 
\leq_{X}  \\ &\leq_{X} &
u \Bigl(\int_G (q_m(g) - q_n(g)) \, d\mu(g) \Bigr) 
+ \Bigl( \int_G(h_m(g) -h_n(g)) \, d\mu(g) \Bigr) v \leq_{X}  \qquad
\\ &\leq_{X} & u \Bigl(\int_G (q(g) - q_n(g)) \, d\mu(g) \Bigr) 
+ \Bigl( \int_G (h(g) -h_n(g)) \, d\mu(g) \Bigr) v. \nonumber
\end{eqnarray*}
Since $q$ and $h$ are integrable on $G$
and their integrals are absolutely continuous, then
the sequences $(q_n - q)_n$ and $(h_n - h)_n$ 
are defining sequences for the identically zero function, 
and so both of them converge in $L^1$ to $0$, thanks to
Theorem \ref{vitali0}. From this last inequality,
%{ (\ref{einekleinenacht}), 
Axioms \ref{convergenze} and conditions
\ref{product}.1.7), \ref{ass}.8) it follows that 
\begin{eqnarray*}\label{conclusionproduct}
\ell\Bigl(\Bigl( \bigvee_{A \in {\mathcal A}}
\Bigl(\Bigl(   \bigvee_{m=1}^{\infty} 
\int_A h_m(g) \, q_m(g) \, d\mu(g)\Bigr) -\int_A h_n(g) \, 
q_n(g) \, d\mu(g) \Bigr)
\Bigr)_n \Bigr)=0.
\end{eqnarray*}
Setting 
%\begin{eqnarray}\label{ell}
$\displaystyle{l(A) := \bigvee_{m=1}^{\infty} 
\int_A h_m(g) \, q_m(g) \, d\mu(g)}$ for every $ A \in {\mathcal A}$,
%\end{eqnarray}
from the previous equality
%(\ref{conclusionproduct}) 
we deduce that 
the map $l$ %defined in (\ref{ell}) 
satisfies (\ref{l}).
%This ends the proof. 
\end{proof}
\begin{proof}[Proof of Corollary \ref{intproduct2}]
For each $g\in G$, set
%\begin{eqnarray*}
$h^*(g) = h(g) \chi_{\overline{B}}$.
It is not difficult to check that
$h^*$ is bounded and integrable on $G$. 
By Proposition \ref{intproduct}, the function 
$h^* \cdot q$ is bounded and integrable on $G$ too.
By hypothesis, we have
%\begin{eqnarray}\label{hq}
$h(g)\, q(g) = h^*(g) \, q(g) \chi_{\overline{B}}$.
%\end{eqnarray} 
Thus, the boundedness and integrability of $h \cdot q$ follow
from those of $h^* \cdot q$.
\end{proof}
%\begin{proof}

\begin{proof}[Proof of Proposition \ref{compuc}]
Let $u \in {\mathbf{X}}_1^+$ be related to the uniform continuity
of $f$ on $G$, and choose arbitrarily $
\varepsilon \in  \mathbb{R}^+$.
By the uniform continuity of $\psi$ on $\mathbf{X}_1$, there are $\sigma (\varepsilon)\in 
\mathbb{R}^+$
and $w \in {\mathbf{X}}_1^+$ with 
%\begin{eqnarray*}\label{compuc1}
$\vert \psi(x_1)-\psi(x_2)\vert  \leq_{X_1}  \, \varepsilon \, w$  
whenever $\vert x_1-x_2\vert \leq_{X_1} \sigma(\varepsilon) \, u$.
%\end{eqnarray*}
By virtue of the uniform continuity of $f$ on $G$, in correspondence with $\sigma(\varepsilon)$
there is $\delta(\varepsilon) \in \mathbb{R}^+$ with
%\begin{eqnarray*}\label{compuc2}
$\vert f(g_1)-f(g_2) \vert \leq_{X_1} \sigma (\varepsilon)\, u $ whenever  
$d(g_1,g_2) \leq \delta(\varepsilon)$.
%\end{eqnarray*}
So, for $x_i = f(g_i), \,i=1,2$, %From (\ref{compuc1}) and (\ref{compuc2})
it follows that 
%\begin{eqnarray*}
$\vert \psi(f(g_1))-\psi(f(g_2))\vert
 \leq_{X_1} \varepsilon \, w$ if $ d(g_1,g_2) \leq  \delta(\varepsilon).$
%\end{eqnarray*}
%getting the assertion. 
\end{proof}
\begin{proof}[Proof of Proposition \ref{boundedness}]
We begin with proving the boundedness of $f$ on $G$.
Let $C \subset G$ be a compact set
such that $f(g)=0$ whenever $g\in G \setminus C$. Observe that
it is enough to prove
that $f$ is bounded on $C$. As $f$ is uniformly continuous
on $G$, there exists $u \in {\mathbf{X}}^+_1 \setminus \{0\}$ 
such that for  $\varepsilon=1$ there is $\delta_1 \in \mathbb{R}^+$
with
\begin{eqnarray}\label{n0}
\bigvee_{g_1, g_2 \in 
G, \,\, d(g_1,g_2)\leq \delta_1} \vert f(g_1) - f(g_2) \vert  \leq_{X_1} u.
\end{eqnarray}
Since $C$ is compact, then $C$ is totally bounded too, and thus 
there is a finite number $t_1$, $t_2, \ldots, t_q$ of 
%\mg{\tiny e modifiche successive}
elements of $G$ such that 
$
\displaystyle{C \subset \bigcup_{j=1}^q 
%B(t^{(j)}, \sigma_{n_0})= \bigcup_{j=1}^q 
\{g \in G: d(g,t_j) \leq \delta_1\} }.
$
 Let 
$\displaystyle{\widehat{u}= \bigvee_{j=1}^q \,
\vert f(t_j) \vert}$,  and
pick arbitrarily $g \in G$. There is $\overline{j}
\in \{1,2,\ldots, q \}$ with
$d(g,t_{\overline{j}}) \leq \delta_1$. We have
\begin{eqnarray}\label{bdd}
\vert f(g) \vert  \leq_{X_1} \vert f(g) - f(t_{\overline{j}}) \vert + 
\vert f(t_{\overline{j}}) \vert \leq_{X_1} u+ \widehat{u}.
\end{eqnarray} 
Thus, $f$ is bounded on $C$.
%==================== vecchia proof======================
%==================================================

%======================================
%=======================================
Now we prove the integrability of $f$ on $G$.
Without loss of generality, we can assume $f \geq 0$.
Let $(\epsilon_n)_n$ be any $(o)$-sequence 
in $\mathbb{R}^+$. 
By the uniform continuity of $f$ on $G$, 
there exists an $(o)$-sequence $(\delta_n)_n$ in
$\mathbb{R}^+$ with 
\begin{eqnarray}\label{uc1}
0 \leq_{X_1}\vert f(g_1) - f(g_2)\vert  \leq_{X_1} \varepsilon_n \, u \quad \text{whenever  } d(g_1,g_2) 
\leq \delta_n
\end{eqnarray}
for every $n \in \mathbb{N}$.
% where $u \in {\mathbf{X}}^+_1 \setminus \{0\}$ is as in (\ref{n0}). 
From  \ref{ass}.7)
 we deduce
\begin{eqnarray}\label{sussmayrconditionucgenu}
\displaystyle{\ell_1 ((\varepsilon_n \, u  )_n)=0}. 
\end{eqnarray} 
Now we construct a  defining
sequence  by induction. 
At the generic $n$-th step, $n \in \mathbb{N}$, we partition the set $C$ 
into a finite number $l_n$ of pairwise disjoint sets of ${\mathcal A}$ 
of diameter less than or equal to 
$\delta_n$ (this is always possible, since
$G$ is a metric space and $C$ is totally bounded) with $\mu(A) \in X_2$ for every $A \in \mathcal{E}_n$, thanks to the regularity of $\mu$. 

Denote this family by  
${\mathcal E}_n=\{E^{(j)}_n: j=
1,2, \ldots l_n \}$, and take ${\mathcal E}_n$ in such a way that 
${\mathcal E}_n$ is a refinement of the family ${\mathcal E}_{n-1}$ constructed 
at the $n$-$1$-th step: that is, for each 
$E^{(j)}_{n} \in {\mathcal E}_n$, $j=1,2,\ldots, l_n$ there exists 
$E^{(i)}_{n-1} \in {\mathcal E}_{n-1}$ with $E^{(j)}_n \subset  E^{(i)}_{n-1}
$. For every $n \in \mathbb{N}$, set 
\begin{eqnarray}\label{mozartsimpleuniform1}
f_n(g)= \bigwedge \{f(g):g \in E^{(j)}_n\} \quad \text{ if   } g \in E^{(j)}_n
\quad (j=1,2, \ldots, l_n).
\end{eqnarray}
Moreover, let $f_n(g)=0$ for each $n \in \mathbb{N}$ and 
$g \in G \setminus C$.
It is not difficult to see that, for every $ g \in G$
  and  $n \in \mathbb{N}$,
\begin{eqnarray}\label{pp}
f_n \in\mathscr{S}\, \quad  \text{   and   }\quad   0 
 \leq_{X_1} f_n(g) \leq_{X_1}
f_{n+1}(g)  \leq_{X_1} f(g)  \leq_{X_1} u+ \widehat{u}.
\end{eqnarray} 
From (\ref{uc1}) and (\ref{mozartsimpleuniform1}) we obtain
\begin{eqnarray*}\label{mozartuniform}
0  \leq_{X_1} \bigvee_{g \in G} (f(g)-f_n(g)) \leq_{X_1} 
\bigvee_{g_1, g_2 \in G, d(g_1,g_2)\leq 
\delta_n} \vert   f(g_1) - f(g_2) \vert 
 \leq_{X_1} \varepsilon_n \, u,
\end{eqnarray*}
and then by 
%From (\ref{sussmayrconditionucgenu}), (\ref{mozartuniform})%and the 
Axioms \ref{convergenze} we get
\begin{eqnarray*}
\displaystyle{\ell_1\Bigl(\Bigl( \bigvee_{g \in G} \,
\vert f_n(g)-f(g) \vert \Bigr)_n\Bigr)=0}.
\end{eqnarray*}
Thus, the sequence $(f_n)_n$ converges uniformly 
to $f$ on $G$,
and a fortiori it converges in measure to $f$ on $G$.

Now we prove the $\mu$-equiabsolute continuity of 
the integrals of the $f_n$'s. We begin with \ref{eac}.1).
%Let $u^*=u+\widehat{u}$, where $u$ and  $\widehat{u}$ are as in (\ref{bdd}).
Choose arbitrarily a sequence $(A_n)_n$ from ${\mathcal A}$, with
$\ell_2((\mu(A_n))_n)=0$.
%From (\ref{bdd}) and
By  (\ref{pp}) we get 
\begin{eqnarray*}\label{integralequiboundedness}
0  \leq_{X} \int_{A_n} f_n(g) \, d\mu(g) \leq_{X} 
\mu(A_n) ( u+\widehat{u})
\quad \text{ for  any   } n \in \mathbb{N}. 
\end{eqnarray*}
From the previous inequality,   
Axioms \ref{convergenze}.b) and \ref{convergenze}.c)
and condition \ref{ass}.7) it follows that 
\begin{eqnarray*}\label{eaceac1}
\displaystyle{
\ell \left( \left(\int_{A_n}\,f_n(g) \,
d\mu(g)\right)_n \right)=0}.
\end{eqnarray*}
Now we turn to \ref{eac}.2). From (\ref{pp}), the positivity
of $\mu$ and the monotonicity of the integral we deduce 
\begin{eqnarray}\label{sussmayrcompactsupport}
0  \leq_{X} \int_{G \setminus C} f_n(g) \, d\mu(g)
 \leq_{X} \int_{G \setminus C} f(g) \, d\mu(g)=0  
\quad \text{ for   all  }  n \in \mathbb{N}.
\end{eqnarray} 
Condition \ref{eac}.2) follows
from (\ref{sussmayrcompactsupport}) and 
 Axioms \ref{convergenze}, taking $B_m=C$ 
for each $m \in \mathbb{N}$.

Now we prove the existence of a map $l$, satisfying (\ref{l}) in the definition of integrability. 
%From (\ref{pp}) and (\ref{propertyproblemsolving}), 
 For each 
%$m \geq \overline{m}$, 
$n \geq m$
and $A \in {\mathcal A}$, we get
\begin{eqnarray*}
0 & \leq_{X}& \int_A f_n(g) \, d\mu(g) -\int_A f_m(g) \, d\mu(g) =\nonumber
\int_A (f_n(g) -f_m(g)) \, d\mu(g)  \leq_{X}  \\ & \leq_{X}&
\Bigl( \bigvee_{g \in C} (f_n(g) - f_m(g))  \Bigr) \, 
\mu(C)  \leq_{X}
\Bigl( \bigvee_{g \in C} (f(g) - f_m(g))  \Bigr) \, 
\mu(C)  \leq_{X}  \nonumber \,
 \varepsilon_n \,\mu(C) \, u.
\end{eqnarray*}
Taking in the previous formula
the supremum as $n$ varies in $\mathbb{N}$, 
and taking into account the
monotonicity of the sequence $(f_n)_n$
  and of the integral, we obtain
\begin{eqnarray*}
0 &  \leq_{X}& \bigvee_{A \in {\mathcal A}}
\Bigl(\Bigl(   \bigvee_{n=1}^{\infty} 
\int_A f_n(g) \, d\mu(g) \Bigr) -\int_A f_m(g) \, d\mu(g) \Bigr)
 \leq_{X}\varepsilon_n \, \mu(C) \, u .
\end{eqnarray*}
Put 
%\begin{eqnarray}\label{lel}
$l(A)= \bigvee_{n=1}^{\infty} 
\int_A f_n(g) \, d\mu(g)$, $A \in {\mathcal A}$.
%\end{eqnarray}
From (\ref{sussmayrconditionucgenu}), the previous inequality 
%(\ref{lel})
 and Axioms 
\ref{convergenze} we obtain
\begin{eqnarray*}
\ell \Bigl( \Bigl( \bigvee_{A \in \mathcal{A}}
\Bigl( l(A) - \int_A \, f_m(g)\,d\mu(g) \Bigr)
\Bigr)_m \Bigr)=0.
\end{eqnarray*}
%that is (\ref{l}). This ends the proof. $\quad \Box$
\end{proof} 
\begin{proof}[Proof of Proposition \ref{beginning}]
By 
%(\ref{support}) used 
the definition of convexity, if we consider 
%with
 $v= \xi x$, $s=0$, 
there is $\beta_{\xi x} \in \mathbf{X}_1$ with
%\begin{eqnarray*}\label{mozart01}
$0 = \varphi(0) \geq_{X_1} \varphi(\xi x) - \beta_{\xi x} \, \xi x,$ 
%\end{eqnarray*} 
while for 
%By applying again (\ref{support}) with 
$v= \xi x$ and $s=x$, we get
%\begin{eqnarray*}\label{mozart02}
$\varphi(x) \geq_{X_1}  \varphi(\xi x) + \beta_{\xi x} (1-\xi) x$.
%\end{eqnarray*}
Multiplying by $1-\xi$ the first one 
%all members of (\ref{mozart01}) 
and by $\xi$ the second one, 
%all members of (\ref{mozart02}), 
we obtain 
\begin{eqnarray*}\label{mozart03}
0 \geq_{X_1}  (1-\xi)  \varphi(\xi x) - \beta_{\xi x} \, \xi (1-\xi) x ,
\quad
\xi \, \varphi(x) \geq_{X_1}  \xi \, \varphi(\xi x) + \beta_{\xi x} \, \xi (1-\xi) x .
\end{eqnarray*}
Summing up,  
%(\ref{mozart03}) and (\ref{mozart04}), 
we get\,
$\xi \, \varphi(x) \geq_{X_1}  \varphi(\xi x)$.%, that is (\ref{mozart0}).
\end{proof}

\begin{proof}[Proof of Proposition \ref{locallipschitzconvex}]
Fix arbitrarily $u \in \mathbf{X}_1$, 
%\mg{\tiny messo $X_1 \, \rightarrow$}
pick $v \in [-u,u]$, let $\beta_v$ be as in Definition \ref{def-conv}
and set $\displaystyle{\beta^*_u =\bigvee_{v \in [-u,u]}
\vert \beta_v \vert}$. 
%By (\ref{betabeta}), 
 Note that
$\beta^*_u \in \mathbf{X}_1$.
Choose arbitrarily $x_1$, $x_2 \in \mathbf{X}_1$.
By the convexity of $\varphi$, if we set
% (\ref{support}) used with
 $v=x_1$ and $s=x_2$
(resp., $v=x_2$ and $s=x_1$), we get
\begin{eqnarray*}\label{support12}
\varphi(x_2) \geq_{X_1}  \varphi(x_1) 
+ \beta_{x_1} (x_2-x_1),%\\ 
\quad \text{(resp., }
\varphi(x_1) \geq_{X_1}  \varphi(x_2) + \beta_{x_2} (x_1-x_2) \, ).
%\nonumber
\end{eqnarray*}
So,
$\vert \varphi(x_2) - \varphi(x_1)\vert  \leq_{X_1}   \beta^*_u \, \vert x_1-x_2\vert $.
% that is the assertion. 
\end{proof}
%==================================================

\begin{proof}[Proof of Proposition \ref{compositionmozarteum}]
Thanks to the uniform continuity of $f$ on $G$, there is an element 
%\mg{\tiny messo $X_1\, \rightarrow$}
$u \in  \mathbf{X}_1^+ \setminus \{0\}$ with the property that for every
$\varepsilon \in \mathbb{R}^+ $ there exists $
\delta \in \mathbb{R}^+$
with 
\begin{eqnarray*}\label{ucuc}
\vert f(g_1) - f(g_2)\vert  \leq_{X_1}  \varepsilon \, u \quad \text{ whenever  }
g_1, g_2 \in G,   \, \, d(g_1,g_2)\leq \delta.
\end{eqnarray*}
Moreover, since $f(g)=0$ for all $g \in G \setminus C$  and $C$ is compact, by virtue of  Proposition \ref{boundedness} we get that
$f$ is bounded. Let 
$\displaystyle{u^*=\vee_{g \in G} \vert f(g) \vert }$.
By Proposition \ref{locallipschitzconvex}, there is an element $\beta^*_{u^*} \in \mathbf{X}_1^+ \setminus \{0\}$ with 
%\begin{eqnarray*}\label{convexlipschitz2}
$\vert \varphi(x_1)-\varphi(x_2) \vert \leq_{X_1}  \beta^*_{u^*} \, \vert x_1 - x_2\vert $ for all $x_1, x_2 \in [-u^*,u^*]$. 
Then, it follows that
\begin{eqnarray*}\label{uccomp}
\vert \varphi(f(g_1)) - \varphi(f(g_2))\vert  \leq_{X_1}   \beta^*_{u^*} \, \vert (f(g_1)-f(g_2))\vert \leq_{X_1} 
\varepsilon \, \beta^*_{u^*} \, \, u
\end{eqnarray*}
whenever $g_1$, $g_2 \in G$, $d(g_1,g_2) \leq \delta$.
Therefore, $\varphi \circ f$ is uniformly continuous  on $G$. Since $f(g)=0$ for every $g \in G \setminus C$ and $\varphi(0)=0$, we get that
$\varphi \circ f$ vanishes outside $C$. Thus, $\varphi \circ f$ is integrable on $C$, thanks to Proposition \ref{boundedness}.
\end{proof}
\begin{proof}[Proof of Theorem \ref{jensen1}]
By Proposition \ref{lebesgue} (resp., \ref{boundedness}), $h$ 
(resp., $f$) is integrable on $G$. 
%, according to Definitions \ref{integrabilita} and \ref{integrabilitaa}. 
Moreover, taking into account that $\varphi(0)=0$ and since  
$({\mathbf{X}},\mathbb{R}, {\mathbf{X}})$ is a product triple,
thanks to Proposition \ref{compositionmozarteum}, we have that $\varphi \circ f$ is  uniformly continuous and integrable on $G$  and vanishes outside $C$.
If we apply twice 
 Corollary \ref{intproduct2}  to the pairs $(h,f)$ and  $(h,\varphi \circ f)$ 
it follows that the functions $h \cdot f$  and  $h \cdot (\varphi \circ f)$ are bounded and integrable on $G$.
Let 
$\tau= \int_G h(g) \, f(g) \, d\mu(g) \in \mathbf{X}.$
%(resp. $\displaystyle{\zeta= \int_G h(g) \, \varphi(f(g)) \, d\mu(g)} \, )$
%is well-defined.
Since $\varphi$ is convex,  in correspondence with $\tau$ there is an element $\beta_{\tau}  \in \mathbf{X}$,
satisfying the definition  of convexity
% (\ref{support}), 
with $s=f(g)$, $g \in G$, %that is 
\begin{eqnarray*}\label{72bis}
	\varphi(f(g)) \geq_{X}  \varphi(\tau) +  \beta_{\tau} \,  (f(g)-\tau).
\end{eqnarray*}
By multiplying both members of %(\ref{72bis}) 
by $h(g)$
%, since $h(g) \geq 0$
 we get,  for all $g \in G$,
\begin{eqnarray*}\label{sussmayr} 
	h(g) \,
	\varphi(f(g)) \geq_{X} h(g) \, \varphi(\tau)  + h(g) \, \beta_{\tau} \, f(g) -\, h(g) \, \beta_{\tau} \, \tau.
\end{eqnarray*}
Observe that all members of the previous inequality are integrable on $G$.
Since the integral is monotone,
 (see Remark
%\ref{pf}.e), 
\ref{pf}.f) and
$\displaystyle{\int_G h(g) \, d\mu(g)=1}$, we get
\begin{eqnarray*}
	&& \int_G h(g) \, \varphi(f(g)) \,  d\mu(g) \geq_{X} \\
	&&\geq_{X} \Bigl( \int_G h(g) \, d\mu(g) \Bigr) \, 
	\cdot \, \Bigl( \varphi \Bigl(\int_G h(g) \, f(g) \, d\mu(g) \Bigr) \Bigr)+
	\beta_{\tau} \int_G h(g) \, f(g) d\mu(g) -\\ &&- \beta_{\tau} \Bigl( \int_G h(g) \, d\mu(g) \Bigr) \, \cdot \,
	\Bigl(\int_G h(g) \, f(g) \, d\mu(g) \Bigr) = \varphi \Bigl (\int_G h(g) \, f(g) \, d\mu(g) \Bigr).
\end {eqnarray*}

Therefore, the assertion follows.
%from the monotonicity of $\ell$ (Axiom \ref{convergenze}.b).
%that is (\ref{jensenmozart}).
\end{proof}
\begin{proof}[Proof of Corollary \ref{jensencoroll}]
Let $\displaystyle{m_h=\int_G h(g) \, d\mu(g)}$, and  for every $g \in G$, set $h^*(g)=h(g)/m_h$.
%From (\ref{schumann0}) it follows that  
By hypothesis, $\displaystyle{\int_G h^*(g) \, d\mu(g)=1}$.
From Theorem \ref{jensen1} applied to $h^*$, we obtain
\begin{eqnarray*}\label{sussmayr1} \nonumber
	\dfrac{1}{m_h}\int_G h(g) \, \varphi(f(g)) \, d\mu(g) &=&
	\int_G h^*(g) \, \varphi(f(g)) \,  d\mu(g) \geq_{X}
\\
	&\geq_{X}&
 \varphi \Bigl (\int_G h^*(g) \, f(g) \, d\mu(g) \Bigr) \geq_{X}  \varphi \Bigl(\dfrac{1}{m_h} \int_G h(g) \, f(g) \, d\mu(g) \Bigr).
\end{eqnarray*}
From 
Proposition \ref{beginning} applied to $\varphi$,
%(\ref{mozart0})
  with $\displaystyle{x= \dfrac{1}{m_h}\int_Gh(g) \, f(g) \, d\mu(g)}$ and $\xi=m_h$, we get
\begin{eqnarray*}\label{sussmayr2}
	\varphi \Bigl(\dfrac{1}{m_h} \int_G h(g) \, f(g) \, d\mu(g) \Bigr)
	\geq_{X} \dfrac{1}{m_h} \, \varphi \Bigl (\int_G h(g) \, f(g) \, d\mu(g) \Bigr).
\end{eqnarray*}
%Thus, from (\ref{sussmayr1}) and (\ref{sussmayr2}) we get
 Therefore, we obtain
\[\int_G h(g) \varphi(f(g))  \, d\mu(g) \geq_{X}
\varphi \Bigl (\int_G h(g) f(g) \, d\mu(g) \Bigr).\]
%that is the assertion.
\end{proof}

\end{document}